\documentclass[reqno,a4paper]{amsart}

\usepackage{amsmath, amsthm, amscd, amsfonts, amssymb, graphicx, color,mathrsfs,mathtools,enumerate}
\usepackage{lineno}
\usepackage{dsfont}
\modulolinenumbers[5]
\usepackage{bm}

\usepackage{booktabs,multirow,array}
\usepackage{comment}
\usepackage{graphicx}%
\usepackage{multirow}%
\usepackage[title]{appendix}%
\usepackage{xcolor}%
\usepackage{textcomp}%
\usepackage{manyfoot}%
\usepackage{booktabs}%
\usepackage{algorithm}%
\usepackage{algorithmicx}%
\usepackage{algpseudocode}%
\usepackage{listings}%
\usepackage{enumerate, enumitem}
\usepackage{hyperref}
\usepackage{float}

\newtheorem{thm}{Theorem}[section]
\newtheorem{lemma}[thm]{Lemma}
\newtheorem{prop}[thm]{Proposition}
\newtheorem{coro}[thm]{Corollary}

\newtheorem{rmk}[thm]{Remark}
\newtheorem{defn}[thm]{Definition}

\newcommand{\N}{\mathbb N}
\newcommand{\R}{\mathbb R}
\newcommand{\X}{\mathcal{X}}
\newcommand{\K}{\mathcal{K}}

\newcommand{\Id}{{\operatorname{I}}}
\newcommand{\norm}[1]{{\left\|#1\right\|}}
\newcommand{\scal}[2]{{\left\langle #1,#2\right\rangle}}
\newcommand{\abs}[1]{{\left|#1\right|}}

\numberwithin{equation}{section}

\allowdisplaybreaks

\title{Stochastic damped wave and Euler-Bernoulli equations with singular locally H\"older continuous drift}
\author{Davide Addona}
\address{D.Addona: Dipartimento di Scienze Matematiche, Fisiche e Informatiche, Universit\`a degli Studi di Parma, Parco Area delle Scienze, 53/A, 43124 Parma, Italy PR}
\email{\textcolor[rgb]{0.00,0.00,0.84}{davide.addona@unipr.it}}

\author{Davide Augusto Bignamini}
\address{D.A. Bignamini: Dipartimento di Scienza e Alta Tecnologia (DISAT), Universit\`a degli Studi dell'In\-su\-bria, Via Valleggio 11, 22100 Como, Italy}
\email{\textcolor[rgb]{0.00,0.00,0.84}{da.bignamini@uninsubria.it}}
\thanks{Corresponding author: Davide Augusto Bignamini}

\keywords{pathwise uniqueness; weak uniqueness; regularization by noise; H\"older continuous drift; It\^o-Tanaka trick; stochastic damped wave equation \\$ $\\ 
D.Addona: Dipartimento di Scienze Matematiche, Fisiche e Informatiche, Universit\`a degli Studi di Parma, Parco Area delle Scienze, 53/A, 43124 Parma, Italy, davide.addona@unipr.it\\
D.A. Bignamini: Dipartimento di Scienza e Alta Tecnologia (DISAT), Universit\`a degli Studi dell'In\-su\-bria, Via Valleggio 11, 22100 Como, Italy, da.bignamini@uninsubria.it}

\subjclass[2020]{Primary: 60H15, Secondary: 60H50, 35R60}

\begin{document}
\begin{abstract}
Let $U$ and $H$ be two separable Hilbert spaces and $T>0$. We consider a stochastic differential 
equation which evolves in the Hilbert space $H$ of the form
\begin{align}
\label{SDEa}
dX(t)=AX(t)dt+B(t,X(t))dt+GdW(t), \quad t\in[0,T], \quad X(0)=x \in H,    
\end{align}
where $A:D(A)\subseteq H\to H$ is the infinitesimal generator of a strongly continuous semigroup 
$(e^{tA})_{t\geq0}$, $W=(W(t))_{t\geq0}$ is a $U$-cylindrical Wiener process defined on a normal 
filtered probability space $(\Omega,\mathcal{F},\{\mathcal{F}_t\}_{t\in [0,T]},\mathbb{P})$, 
$G:U\to H$ is a linear bounded operator and $B:[0,T]\times H\to H_\beta$ is a locally 
$\theta$-H\"older continuous function with respect to the second variable, uniformly with respect 
to the first one, for some suitable $\theta\in(0,1)$. Here, $H_\beta$ is a Hilbert space which contains $H$ with continuous embedding, so that the drift term is singular: it is not well-defined from the ambient space $H$ into itself. The coefficient $\beta\geq0$ measures the order of such a singularity and the case $\beta=0$ corresponds to a drift term with values in $H$. We prove 
that, under suitable assumptions on the coefficients, weak and pathwise uniqueness hold true for 
equation \eqref{SDEa}. In particular, the conditions assumed on the coefficients cover the 
stochastic damped wave equation in dimension $1$ and the stochastic damped Euler--Bernoulli beam 
equation up to dimension $3$, even in the hyperbolic case.
\end{abstract}
\maketitle

\tableofcontents

\section{Introduction}

The main goal of this paper is to study weak and pathwise uniqueness for a class of semilinear stochastic damped equations with singular drift, which includes the stochastic damped wave and Euler-Bernoulli equations given by
{\small
\begin{align}
\label{concrete_damped_equation}
\left\{
\begin{array}{l}
\displaystyle (\partial^2_{tt}y)(t,\xi)=(\Delta_\xi^k y)(t,\xi)-\rho((-\Delta_\xi^k)^\alpha\partial_ty)(t,\xi)+((-\Delta_\xi^k)^{\beta}f)(t,\xi,y(t,\xi),(\partial_ty)(t,\xi))+(-\Delta_\xi^k)^{-\delta} \dot{W}(t,\xi),  \\[2mm]
\displaystyle y(0,\xi)=g_0(\xi), \ (\partial_ty)(0,\xi)=g_1(\xi), \qquad \qquad \qquad  \xi\in[0,1]^d,  \\[2mm]
\displaystyle 
(\Delta_\xi^{j}y)(t,\xi)=0,
\qquad  \qquad \qquad \qquad \qquad   t\in[0,T], \  \xi\in\partial[0,1]^d, \ j\in\{0,\ldots,k\},
\end{array}
\right.
\end{align}
}
where $T>0$, $\dot{W}$ is a space-time white noise, $k=1,2$, $\Delta_\xi^k$, $k\in\N$, denotes the realization of the $k$-th power of the Laplace operator in $L^2([0,1]^d)$ with homogeneous Dirichlet boundary conditions and, for every $\gamma\in\R$, 
\[
((-\Delta_\xi^k)^{\gamma}f)(\xi)=\sum_{j=1}^{ \infty}\lambda_j^{\gamma}\scal{f}{e_j}_{L^2([0,1]^d)}e_j(\xi),
\]
for every $f\in D((-\Delta_\xi^k)^{\gamma})$, where $\{e_n\}_{n\in\N}$ is an orthonormal basis of $L^2([0,1]^d)$ consisting of eigenvectors of $\Delta_\xi^k$ and $\{\lambda_n\}_{n\in\N}$ is the sequence of corresponding eigenvalues. Further, $\alpha\in(0,1)$, $\rho>0$, $\beta,\delta\geq0$ are parameters of the problem, $g_0\in H^1_0([0,1]^d)$ if $k=1$ and $g_0, \Delta g_0\in H^1_0([0,1]^d)$ if $k=2$ (the Sobolev space of functions with zero trace on the boundary), $g_1\in L^2([0,1]^d)$ and $f$ is locally $\theta$-H\"older continuous for some $\theta\in(0,1)$.

Problems of this form have been considered: in \cite{AddMasPri23}, with an abstract method relying on systems of Forward-Backward Stochastic Differential Equations in infinite dimension, and applied to prove pathwise uniqueness for a class of semilinear stochastic Euler-Bernoulli damped equations; and in \cite{AddBig,AddBig2}, where pathwise uniqueness has been shown also for semilinear stochastic damped wave equations by means of a spectral approach. We also mention \cite{AddBig2026-1}, where weak uniqueness for quasi-linear stochastic damped equations is obtained. We stress that, unlike the case studied here, in the quoted papers only non-singular drifts have been considered, namely $\beta\leq 0$, and, even in this range of $\beta$, our results improve upon those in \cite{AddBig,AddBig2,AddMasPri23}. For a better quality of the presentation, we split the introduction into the following subsections: \emph{Main results}, \emph{State of the art}, \emph{Novelties and improvements} and \emph{Organization of the paper}.

\subsection{Main results} 

Equation \eqref{concrete_damped_equation} can be rewritten as an abstract evolution equation in a separable Hilbert space $H$
\begin{align}
\label{intro_SDE}
dX(t)=AX(t)dt+\mathcal VC(t,X(t))dt+\mathcal GdW(t), \quad t\in[0,T], \qquad X(0)=x_0\in H, 
\end{align}
where $A:D(A)\subset H\to H$ is the infinitesimal generator of a strongly continuous semigroup $\{e^{tA}\}_{t\geq0}$ of linear bounded operators on $H$, $\mathcal V\in\mathscr L(U,H_\beta)$, $\mathcal G\in\mathscr L(U,H)$, $C\in C([0,T]\times H;U)$ and $W=\{W(t)\}_{t\geq0}$ is a $U$-valued cylindrical Wiener process. Here, $U$ and $H_\beta$ are separable Hilbert spaces with $H\subset H_\beta$ with continuous embedding. In particular, we focus on operators $A$ arising from the theory of elastic systems with structural damping (see for instance \cite{chen-russel, CPT87,LT98,trig03} and Section \ref{sec:formulazione} for more details).

Under suitable assumptions, we are able to prove weak and pathwise uniqueness for the mild solution to \eqref{intro_SDE} given by, for every $t\in[0,T]$,
\begin{align}
\label{intro_mild_solution}
X(t)=e^{tA}x_0+\int_0^te^{(t-s)A}\mathcal VC(s,X(s))ds+\int_0^te^{(t-s)A}\mathcal GdW(s), \qquad \mathbb P\textup{-a.s.}
\end{align}
The main results of this paper are Theorems \ref{weak-UB} and \ref{thm:path_uniq}, where we establish sufficient conditions ensuring weak and pathwise uniqueness for \eqref{intro_mild_solution}. As an application of Theorem \ref{weak-UB}, Corollary \ref{cor:weak} provides sufficient conditions for weak uniqueness of the concrete equation \eqref{concrete_damped_equation}. Similarly, Corollary \ref{cor:path} applies Theorem \ref{thm:path_uniq} to derive pathwise uniqueness for \eqref{concrete_damped_equation}. We start by listing the main cases where weak uniqueness holds for \eqref{concrete_damped_equation}.\\ $ $\\
{\bf Stochastic damped wave equation}. Let $d=k=1$. Weak uniqueness holds for \eqref{concrete_damped_equation} in the following cases:
\begin{equation*}
\begin{cases}
&\alpha\in\left(\frac14,\frac12\right], \\[1mm]
&\theta\in (0,1), \\[1mm]
&\beta\in \left[0, \alpha-\frac14\right), \\[1mm]
&\delta\in \left(\frac14-\frac\alpha2,\frac\alpha2-\beta\right)\cap [0,\infty), 
\end{cases}\qquad \qquad 
\begin{cases}
&\alpha\in \left(\frac12,1\right), \\[1mm]
&\theta\in (0,1), \\[1mm]
&\beta\in \left[0,\frac{1-\alpha}2\right), \\[1mm]
&\delta=0.
\end{cases}
\end{equation*}
$ $\\
{\bf Stochastic Euler-Bernoulli beam equation}.
Let $k=2$. Weak uniqueness holds for \eqref{concrete_damped_equation} in the following cases:
\begin{enumerate}
\item $\bm{d=1}: \ \begin{cases}
&\alpha\in\left(\frac18,\frac12\right], \\[1mm]
&\theta\in (0,1), \\[1mm]
&\beta\in \left[0,\min\left\{\frac\alpha 2; \alpha-\frac18\right\}\right), \\[1mm]
&\delta\in \left(\frac18-\frac\alpha2,\frac\alpha2-\beta\right)\cap [0,\infty), 
\end{cases}\qquad \qquad
\begin{cases}
&\alpha\in \left(\frac12,1\right), \\[1mm]
&\theta\in (0,1), \\[1mm]
&\beta\in \left[0,\frac{1-\alpha}2\right), \\[1mm]
&\delta=0.
\end{cases}$ \\[3mm]
\item $\bm{d=2}: \ \begin{cases}
&\alpha\in\left(\frac14,\frac12\right], \\[1mm]
&\theta\in (0,1), \\[1mm]
&\beta\in \left[0, \alpha-\frac14\right), \\[1mm]
&\delta\in \left(\frac14-\frac\alpha 2,\frac\alpha 2-\beta\right)\cap [0,\infty), 
\end{cases}\qquad \qquad 
\begin{cases}
&\alpha\in \left(\frac12,1\right), \\[1mm]
&\theta\in (0,1), \\[1mm]
&\beta\in \left[0,\frac{1-\alpha}2\right), \\[1mm]
&\delta=0.
\end{cases}$ \\[3mm]
\item $\bm{d=3}: \ \begin{cases}
&\alpha\in\left(\frac38,\frac12\right], \\[1mm]
&\theta\in (0,1), \\[1mm]
&\beta\in \left[0,\alpha-\frac38\right),  \\[1mm]
&\delta\in \left(\frac38-\frac\alpha 2,\frac\alpha 2-\beta\right)\cap [0,\infty), 
\end{cases}\qquad \qquad 
\begin{cases}
&\alpha\in \left[\frac12,1\right), \\[1mm]
&\theta\in (0,1), \\[1mm]
&\beta\in \left[0,\min\left\{\frac{1-\alpha}2;\frac18\right\}\right), \\[1mm]
&\delta\in \left(\frac38-\frac\alpha 2,\frac{1-\alpha}2-\beta\right)\cap [0,\infty).
\end{cases}$
\end{enumerate}
$ $\\
In the concrete example \eqref{concrete_damped_equation} with $k=1,2$, Corollary \ref{cor:path} provides the following sufficient conditions for pathwise uniqueness of mild solutions to hold.\\ $ $\\
 {\bf Stochastic damped wave equation}. Let $k=d=1$. Pathwise uniqueness holds for 
\eqref{concrete_damped_equation} in the following cases:
\begin{align*}
&\begin{cases}
&\alpha\in\left(\frac38,\frac12\right], \\[1mm]
&\theta\in \left(\frac{3-2\alpha}{6\alpha},1\right), \\[1mm]
&\beta\in \left[0, \frac{\alpha(1+3\theta)}2-\frac34\right), \\[1mm]
&\delta\in \left(\frac14-\frac\alpha 2,\frac\alpha 2-\beta\right),
\end{cases}\quad
\begin{cases}
&\alpha\in \left(\frac12,\frac34\right), \\[1mm]
&\theta\in \left(\frac23,1\right), \\[1mm]
&\beta\in \left[0,\min\left\{\frac{1-\alpha}2;\ \alpha-\frac34
+\frac{(1-\alpha)(3\theta-1)}2\right\}\right), \\[1mm]
&\delta=0,
\end{cases}\quad
&\begin{cases}
&\alpha\in \left[\frac34,1\right), \\[1mm]
&\theta\in \left(\frac{2\alpha-1}\alpha,1\right), \\[1mm]
&\beta\in \left[0,\frac{1-\alpha}2\right), \\[1mm]
&\delta=0.
\end{cases}
\end{align*}
$ $\\
{\bf Stochastic Euler-Bernoulli beam equations}. Set $k=2$. Pathwise uniqueness holds for 
\eqref{concrete_damped_equation} in the following cases:
\begin{enumerate}
\item $\bm{d=1}$:
\begin{align*}
&\begin{cases}
&\alpha\in\left(\frac5{16},\frac12\right], \\[1mm]
&\theta\in \left(\max\left\{\frac23;\frac{5-4\alpha}{12\alpha}\right\}, 1\right), \\[1mm]
&\beta\in \left[0,\min\left\{\frac\alpha 2;\ \frac{\alpha(1+3\theta)}2-\frac58\right\}\right), \\[1mm]
&\delta=0,
\end{cases}\\
&\begin{cases}
&\alpha\in \left(\frac12,\frac{3}{4}\right), \\[1mm]
&\theta\in \left(\frac23,1\right), \\[1mm]
&\beta\in \left[0,\min\left\{\frac{1-\alpha}2;\ \alpha-\frac58
+\frac{(1-\alpha)(3\theta-1)}2\right\}\right), \\[1mm]
&\delta=0,
\end{cases}\quad
\begin{cases}
&\alpha\in \left[\frac34,1\right), \\[1mm]
&\theta\in \left(\frac{2\alpha-1}\alpha,1\right), \\[1mm]
&\beta\in \left[0,\frac{1-\alpha}2\right), \\[1mm]
&\delta=0.
\end{cases}
\end{align*}
\item $\bm{d=2}$:
\begin{align*}
&\begin{cases}
&\alpha\in\left(\frac38,\frac12\right], \\[1mm]
&\theta\in \left(\frac{3-2\alpha}{6\alpha},1\right), \\[1mm]
&\beta\in \left[0, \frac{\alpha(1+3\theta)}2-\frac34\right), \\[1mm]
&\delta\in \left(\frac14-\frac\alpha2,\frac\alpha2-\beta\right),
\end{cases}\\ 
&\begin{cases}
&\alpha\in \left(\frac12,\frac34\right), \\[1mm]
&\theta\in \left(\frac23,1\right), \\[1mm]
&\beta\in \left[0,\min\left\{\frac{1-\alpha}2;\ \alpha-\frac34
+\frac{(1-\alpha)(3\theta-1)}2\right\}\right), \\[1mm]
&\delta=0,
\end{cases}\quad
\begin{cases}
&\alpha\in \left[\frac34,1\right), \\[1mm]
&\theta\in \left(\frac{2\alpha-1}\alpha,1\right), \\[1mm]
&\beta\in \left[0,\frac{1-\alpha}2\right), \\[1mm]
&\delta=0.
\end{cases}
\end{align*}
\item $\bm{d=3}$:
\begin{align*}
&\begin{cases}
&\alpha\in\left(\frac{7}{16},\frac12\right], \\[1mm]
&\theta\in \left(\frac{7-4\alpha}{12\alpha}, 1\right), \\[1mm]
&\beta\in \left[0,\frac{\alpha(1+3\theta)}2-\frac78\right), \\[1mm]
&\delta\in \left(\frac38-\frac\alpha 2,\frac\alpha 2-\beta\right),
\end{cases} \\ 
&\begin{cases}
&\alpha\in \left[\frac12,\frac34\right), \\[1mm]
&\theta\in \left(\frac{11-12\alpha}{12(1-\alpha)},1\right), \\[1mm]
&\beta\in \left[0,\min\left\{\frac{1-\alpha}2;\ \alpha-\frac78
+\frac{(1-\alpha)(3\theta-1)}2\right\}\right), \\[1mm]
&\delta\in \left(\frac 38-\frac\alpha 2,\min\left\{\frac{1-\alpha}2-\beta;\ \frac32-2\alpha;\  
\frac{1-\alpha(2-\theta)}{2(1-\theta)}\right\}\right),
\end{cases}\quad
\begin{cases}
&\alpha\in \left(\frac34,1\right), \\[1mm]
&\theta\in \left(\frac{2\alpha-1}\alpha,1\right), \\[1mm]
&\beta\in \left[0,\min\left\{\frac{1-\alpha}2;\ \frac{1-\alpha(1-\theta)}2-\frac38\right\}\right), \\[1mm]
&\delta=0.
\end{cases}
\end{align*}
\end{enumerate}

To the best of our knowledge, both the weak and pathwise uniqueness results stated above are the first in the literature for singular equations of the form \eqref{concrete_damped_equation}, that is, when $\beta>0$. To better highlight the novelty of the present paper with respect to the existing literature, we briefly review the current state of the art on this topic.

\subsection{State of the art}
As stressed above, this paper deals with well-posedness of semilinear stochastic damped equations with a singular drift which is H\"older continuous but, in general, not Lipschitz continuous. In the deterministic counterpart, i.e., equations of the form of \eqref{concrete_damped_equation} without noise $\dot W$, this implies that, in general, the equation is ill-posed, as shown for instance in \cite{AddBig}. Hence, the presence of the noise in \eqref{concrete_damped_equation} brings a regularizing effect on the studied equation. This phenomenon is not surprising and goes back to \cite{Zv74}, where the author proved that a stochastic differential equation in dimension $1$, driven by a Brownian motion and with only bounded and measurable drift, admits a unique strong solution, provided that the coefficient of the diffusion satisfies some weak regularity conditions. Since then, this theory has been strongly developed and we recall the main results by Veretennikov \cite{V1980}, who extended the result of Zvonkin in any finite dimension, and Krylov and Rockner \cite{KR2005}, who proved existence and uniqueness for stochastic differential equations with possibly unbounded drift satisfying some integrability condition both in $t$ and $x$.

The first extension to infinite dimension is due to Gyongy and Pardoux, who showed in \cite{GP1993_1,GP1993} that a similar regularizing effect appears in quasi-linear heat equations with locally unbounded drift perturbed by a space-time white noise.
Later, in \cite{FGP2010} the authors proved that a transport equation, with bounded and H\"older continuous drift, admits strong existence and uniqueness when perturbed by a multiplicative Stratonovich noise. 

In the same year, \cite{Dap-Fla2010} provided the first pathwise regularization-by-noise result for stochastic differential equations of the form 
\begin{align}\label{heat-intro}
dX(t)+\Lambda X(t)dt=B(X(t))dt+\Lambda^{-\delta}dW(t), \ t\in(0,T], \qquad X(0)=x_0\in \mathcal{X},    
\end{align}
where $\mathcal{X}$ is a real separable Hilbert space, $\Lambda:{\rm Dom}(\Lambda)\subseteq \mathcal{X}\rightarrow\mathcal{X}$ is a strictly positive self-adjoint operator, $B:\mathcal{X}\rightarrow\mathcal{X}$ is a bounded and $\theta$-H\"older continuous function, $\{W(t)\}_{t\geq0}$ is an $\mathcal{X}$-cylindrical Wiener process and $\delta\geq 0$.
We also mention \cite{Anz-But-Ger-Sha2025}, where, by exploiting the stochastic Sewing Lemma instead of the Zvonkin transformation, pathwise uniqueness is proved for \eqref{heat-intro}, weakening the assumptions on the coefficients of \cite{Dap-Fla2010}. Still for equation \eqref{heat-intro}, a notion of pathwise uniqueness almost everywhere with respect to a Gaussian measure was studied in the case where $B$ is merely measurable and bounded (\cite{DPFPR1}) and locally bounded (\cite{DPFPR2}). The case of gradient systems in Hilbert spaces perturbed by a bounded and measurable drift term was also studied; see \cite{DPFRV}. Regarding weak uniqueness in a framework similar to that of \cite{Dap-Fla2010}, we mention \cite{Cho-Gol1995} and \cite{Kun2013}. We finally refer to \cite{ABGP2006,Z2000} for weak uniqueness results in other contexts.

As far as singular drift terms are concerned, only a few results are available on weak uniqueness and, to the best of our knowledge, only one on pathwise uniqueness. All these works deal with equations of the type \eqref{heat-intro} with $B:H\rightarrow {\rm Dom}(\Lambda^{-\beta})$ locally $\theta$-H\"older continuous and $\beta\geq 0$. In particular, at the end of \cite{D2003}, the author mentions, as a possible application of the main result of the paper, weak uniqueness for \eqref{heat-intro} with $B=\Lambda^{\frac12}F$, where $F:H\to H$ is bounded, H\"older continuous, and has sufficiently small H\"older norm. However, the seminal papers on weak uniqueness for singular SPDEs are \cite{Pri2021,Pri2021c}, where the author proves weak uniqueness for equations of the form \eqref{heat-intro} with $B:H\rightarrow {\rm Dom}(\Lambda^{-1/2})$ locally $\theta$-H\"older continuous and $\delta=0$. In the more recent paper \cite{BOS}, weak uniqueness is studied for \eqref{heat-intro} with $B:H\rightarrow {\rm Dom}(\Lambda^{-\beta})$ continuous and $\beta+\delta<1/2$.

In the latter paper, the authors exploit Zvonkin's method, i.e., replace the bad (drift) term by means of the solution to an infinite-dimensional elliptic Kolmogorov equation and exploit the regularity of the solution. However, dealing with regularity properties of solutions to Kolmogorov equations in infinite dimension is quite hard, and usually requires strong assumptions on the coefficients and involves substantial technical difficulties. 

To avoid these problems, in \cite{ABOS26} the authors study both weak and pathwise uniqueness for \eqref{heat-intro} with $B:H\rightarrow {\rm Dom}(\Lambda^{-\beta})$ locally $\theta$-H\"older continuous and $\beta\geq 0$, taking advantage of the method exploited in \cite{AddBig,AddBig2}, where suitable finite-dimensional approximations of both the Kolmogorov equation and of the solution to \eqref{heat-intro} are introduced. Thanks to this approach, it is also possible to rescale the approximating Kolmogorov equations according to the behaviour of the eigenvectors of the operator $A$. This scaling allows us to remove a condition on the growth of the H\"older norm of the drift term, which appeared in \cite{Dap-Fla2010} and strongly limited the class of admissible drift terms. Hence, in \cite{ABOS26}, sufficient conditions on the drift term, ensuring pathwise uniqueness for the solutions to the stochastic PDEs, only involve the H\"older exponent of the drift term. It is worth mentioning the recent paper \cite{Orr-Sca-Ste2026}, where weak stability by noise for approximations of doubly nonlinear evolution equations is studied.

However, due to its structure, the abstract equation \eqref{heat-intro} cannot cover equations of the type \eqref{concrete_damped_equation}. Hence, a different approach is required in order to study a class of abstract equations of the form \eqref{abstract_formulation}.

\subsection{Main novelties and improvements}

Inspired by \cite{ABOS26}, in this paper the main tool to obtain weak and pathwise uniqueness for weak mild solutions to \eqref{abstract_formulation} is an It\^o-Tanaka trick, where the singular and irregular drift term is replaced by means of solutions to a family of rescaled finite dimensional Kolmogorov equations. However, from a spectral and functional-analytic point of view, the singular equation \eqref{intro_SDE} is more complex than \eqref{heat-intro}. Indeed, we consider a class of operators $A$ that are not self-adjoint and do not admit an orthonormal basis of $H$ consisting of eigenvectors of $A$. These facts, combined with the non-injectivity of the operator $\mathcal{G}$, lead to further difficulties in the study of the associated finite-dimensional Kolmogorov equation, since we are not in the elliptic setting (as in \cite{ABOS26}) but only in the hypoelliptic one. As a consequence, the study of the singular case in this framework requires suitable techniques and sharp computations in order to obtain the precise values of the parameters involved in the main estimates. Furthermore, even in the non-singular case, namely $\beta=0$, we considerably improve the results in \cite{AddBig, AddBig2} by removing the condition on the H\"older norm of $C$ (see \cite[Formula (13)]{AddBig} and \cite[Formula (42)]{AddBig2}), which strongly restricted the class of admissible drift terms. Another remarkable improvement with respect to the quoted paper consists in the possibility of allowing $\alpha<\frac12$, i.e., the hyperbolic setting, without any additional condition; indeed, the results in \cite{AddBig} do not cover this case, while in \cite{AddBig2} we were able to consider the hyperbolic case only by assuming further regularity on the range of the drift term, i.e., $\beta<0$ ($\beta$ correspond to $-\sigma$ in the notations of \cite[Section 4.2]{AddBig2}), which is in complete contrast with the singularity of the drift, tuned by the value of $\beta>0$. As far as the method used in \cite{AddMasPri23} to study the Euler-Bernoulli equation in dimension 1 is concerned, it is essential to have the structure condition $\beta=-\delta$; consequently, this method cannot be applied in the singular case.\\
Finally, we deal with time-dependent singular drifts. This implies that, instead of elliptic Kolmogorov equations (as in \cite{ABOS26}), in the approximating procedure we are forced to consider parabolic Kolmogorov equations, with the problem of the time regularity of the solution.

Due to these improvements, we are able to treat both weak and pathwise uniqueness for semilinear stochastic damped equations with singular drift which, to the best of our knowledge, have not been considered so far.


Finally, we stress that, in this paper, our setting is slightly less general than those considered in \cite{AddBig, AddBig2,AddBig2026-1}, where the authors consider abstract operators $A$ which satisfy suitable assumptions on the decomposition of the space $H$, according to their eigenvectors, and suitable behaviour at infinity of the norm of the associated eigenvalues. Clearly, operators $A$ arising from the theory of elastic systems with structural damping satisfy such assumptions. However, two reasons motivate our choice. First, we have preferred to confine ourselves to this special class of operators since they are the best-known and most important in applications. Secondly, this choice significantly reduces and simplifies both the computations and the notation, which remain lengthy and technically involved even in this (simplified) setting. Even though we believe that the results obtained in this paper extend to the operators considered in the quoted papers, with a suitable choice of the parameters, we hope that our decision (to trade a measure of generality for a measure of clarity) will help the reader.

\subsection{Organization of the paper}
The paper is organized as follows. In Section \ref{sec:notation} we will list the notation used in the paper. In Section \ref{sec:formulazione} we will show the abstract reformulation of \eqref{concrete_damped_equation} and  we will prove the functional properties of the coefficients that will be fundamental in the next sections. In Section \ref{sec:kolmo} we will study the finite-dimensional Kolmogorov equation that we will use for the Zvonkin transformation. In Section \ref{sec:uniq} we will prove the main results of the paper and show how to apply such results to the case of stochastic semi-linear damped wave and Euler-Bernoulli beam equations. Finally, in Appendix \ref{app:mart} we provide the equivalence between martingale problem and weak mild solutions for nonautonomous operators which has been exploited to prove weak uniqueness in the first part of Section \ref{sec:uniq}.

\section{Notation}
\label{sec:notation}
Let $\K_1$ and $\K_2$ be two real separable Banach spaces equipped with the norms $\norm{\cdot}_{\K_1}$ and $\norm{\cdot}_{\K_2}$, respectively. 
 We denote by $\mathcal{B}(\K_1)$ the Borel $\sigma$-algebra associated to the norm topology in $\K_1$. 
We denote by $\Id_{\K_1}$ the identity operator on $\K_1$. Let $n\in\N$.
We denote by $\mathcal{L}^{(n)}(\K_1;\K_2)$ the set of multilinear continuous maps from $\K^n_1$ to $\K_2$. If $n=1$, then we write $\mathcal{L}(\K_1;\K_2)$ and, if $\K_1=\K_2$, then we write $\mathcal{L}^{(n)}(\K_1)$.

We denote by $B_b(\K_1;\K_2)$ the set of bounded and Borel measurable functions from $\K_1$ into $\K_2$. If $\K_2=\R$, then we simply write $B_b(\K_1)$. We denote by $C_b(\K_1;\K_2)$ the space of bounded and continuous functions from $\K_1$ into $\K_2$. If $\K_2=\R$, then we simply write $C_b(\K_1)$. We endow $C_b(\K_1;\K_2)$ with the norm
\[
\norm{f}_{\infty}=\sup_{x\in \K_1}\|f(x)\|_{\K_2}.
\]

We denote by $C_b^\theta(\K_1;\K_2)$ the subspace of all $\theta$-H\"older-continuous functions in $C_b(\K_1;\K_2)$. The space $C_b^\theta(\K_1;\K_2)$ is a Banach space if endowed with the norm
\begin{align*}
\norm{f}_{C_b^\theta(\K_1;\K_2)}:=\norm{f}_\infty+[f]_{C_b^\theta(\K_1;\K_2)},
\end{align*}
where $[\cdot]_{C_b^\theta(\mathcal{K}_1;\mathcal{K}_2)}$ denotes the standard seminorm on $C_b^\theta(\mathcal{K}_1;\mathcal{K}_2)$.
If $\K_2=\R$, then we simply write $C_b^\theta(\K_1)$.

Let $k\in\N$ and let $f:\K_1\rightarrow \K_2$ be a $k$-times Fr\'echet differentiable function. We denote by $D^k f(x)$ its Fr\'echet derivative of order $k$ at $x\in\K_1$. 
For $k\in\N$, we denote by $C_b^{k}(\K_1;\K_2)$ the space of bounded, continuous and $k$ times Fr\'echet differentiable functions $f:\K_1\rightarrow\K_2$ such that $D^if\in C_b(\K_1;\mathcal{L}^{(i)}(\K_1;\K_2))$, for $i=1,\ldots,k$. We endow $C_b^{k}(\K_1;\K_2)$ with the norm
\[
\norm{f}_{C_b^{k}(\K_1;\K_2)}:=\norm{f}_{\infty}+\sum^k_{i=1}\sup_{x\in\K_1}\|D^if(x)\|_{\mathcal{L}^{(i)}(\K_1;\K_2)}.
\]

Let $(\Omega,\mathcal{F},\mathbb{P})$ be a probability space and let $Y:(\Omega,\mathcal{F},\mathbb{P}) \rightarrow (\K_1,\mathcal{B}(\K_1))$ be a random variable. We denote by 
\[
\mathbb{E}[Y]:=\int_\Omega Y(\omega)\mathbb{P}(d\omega)=\int_{\K_1} x[\mathbb{P}\circ Y^{-1}](dx)
\]
the expectation of $Y$ with respect to $\mathbb{P}$, where the integral is meant in the Bochner sense.

Let $\X$ be a separable Hilbert space and let $\{g_k\}_{k\in\N}$ be an orthonormal basis of $\X$. We call $\X$-cylindrical Wiener process a stochastic process $\{W(t)\}_{t\geq 0}$ defined on a complete filtered probability space $(\Omega,\mathcal{F},\{\mathcal{F}_t\}_{t\geq 0},\mathbb{P})$, such that 
\[
W(t):=\sum_{k=1}^{\infty} \beta_k(t)g_k \qquad  t\geq0,
\]
where $\{\beta_n(t)\}_{t\geq 0}$, $n\in\N$, are real independent Brownian motions on $(\Omega,\mathcal{F},\{\mathcal F_t\}_{t\geq0},\mathbb{P})$.

Given two sequences $\{a_n\}_{n\in\N}$ and $\{b_n\}_{n\in\N}$ of positive real numbers, by $a_n\sim b_n$ we mean that there exist positive constants $c_1,c_2$ such that eventually $c_1a_n\leq b_n\leq c_2 a_n$.

\section{Abstract reformulation of the equation}
\label{sec:formulazione}

In this section, we introduce the abstract framework in which the SPDE \eqref{concrete_damped_equation} is studied. Moreover, we establish several estimates on the coefficients of the equation that will be essential for proving weak and pathwise uniqueness. The following assumptions define the framework in which we will work.

\label{sub:assumptions_operators}
\begin{enumerate}[start=0,label={{(H\arabic*})}]
\item \label{H0} $T>0$. $U$ is a real separable Hilbert space with scalar product $\scal{\cdot}{\cdot}_U$. We set $H=U\times U$. 
\item\label{H0.5}$\{W(t)\}_{t\geq 0}$ is a $U$-cylindrical Wiener process. 

\item \label{H1} $\Lambda: D(\Lambda)\subseteq U\rightarrow U$ is a linear self-adjoint 
operator and there exist an orthogonal basis $\{e_k\}_{k\in\N}$ of $U$ consisting of 
eigenvectors of $\Lambda$ and a non-decreasing sequence $\{\mu_k\}_{k\in\N}$ of eigenvalues, such that $\mu_1>0$, $\mu_k\to\infty$ as 
$k\to\infty$ and
\[
\Lambda e_k=\mu_k e_k, \qquad k\in\N.
\]
The normalization of the vectors $e_k$ will be fixed in \eqref{Def:normalizzazione}.

\item \label{H2} $C:[0,T]\times H\rightarrow U$ is a continuous map and there exists $\theta\in (0,1)$ such that for every $R>0$ we have
\[
\sup_{t\in [0,T]}\sup_{x,y\in B_R(0)\ :\ x\neq y}\dfrac{\norm{C(t,x)-C(t,y)}_U}{\norm{x-y}_H^\theta}< \infty,
\]
where $B_R(0)$ is the closed ball of $H$ with radius $R$ and center $0$.

\item \label{H3} $\alpha\in (0,1)$, $\rho>0$, $\beta\in [0,1/2)$ and $\delta\geq 0$ are such that $\rho\neq 2\mu_n^{{1/2}-\alpha}$, for every $n\in\N$, $\beta<\alpha$ and $\sum_{n=1}^{ \infty}\mu_n^{-\alpha-2\delta}< \infty$.
\end{enumerate}
By assumption \ref{H1}, the operator $\Lambda^\zeta:D(\Lambda^\zeta)\subseteq U\to U$ can be defined for every $\zeta\in\R$ by setting
\begin{align*}
D(\Lambda^\zeta):=\left\{u\in U:\sum_{k\in\N} \frac{\mu_k^{2\zeta}|\langle u,e_k\rangle_U|^2}{||e_k\|_U^2} <\infty\right\}, \qquad \Lambda^\zeta u=\sum_{k\in\N}\frac{\mu_k^\zeta\langle u,e_k\rangle_U}{\|e_k\|_U^2}e_k, \ u\in D(\Lambda^\zeta).  
\end{align*}
The operators $A:D(A)\subseteq H\rightarrow H$, $\mathcal{V}:{\rm Dom}(\Lambda^{\beta})\subseteq U\rightarrow H$ and $\mathcal G: U\rightarrow H$ are defined as
\begin{align}
&Ah = \left(\begin{matrix}
0 & \Lambda^{\frac12} \\
-\Lambda^{\frac12} & -\rho \Lambda^\alpha
\end{matrix}\right)h,\qquad h\in  D(A):= \left\{h=\begin{pmatrix}
h_1 \\ h_2    
\end{pmatrix}\in H:h_1\in D(\Lambda^\frac12), \ h_2\in D(\Lambda^{\alpha\vee\frac12})\right\},\label{Def:A}\\ 
&\mathcal{V}u :=  \left(\begin{matrix}0 \\ \Lambda^{\beta} u \end{matrix}\right), \phantom{aaaaaaaaaai} u\in D(\Lambda^\beta),\label{Def:V}\\
&\mathcal Gu:= \left(\begin{matrix}0 \\ \Lambda^{-\delta} u \end{matrix}\right),  \phantom{aaaaaaaaai} u\in U.\label{Def:V_G}
\end{align}
Under assumptions \ref{H0}-\ref{H3}, we will consider the following stochastic evolution equation in $H$
\begin{align}
\label{abstract_formulation}
 dX(t)=AX(t)dt+\mathcal VC(t,X(t))dt+\mathcal GdW(t), \ \  t\in[0,T], \qquad X(0)=x\in H. 
\end{align}
It is possible to rewrite equation \eqref{concrete_damped_equation} as \eqref{abstract_formulation} by choosing $U=L^2([0,1]^d)$ and $-\Lambda$ as the realization (or the square of the realization) of the Laplacian in $U$ with Dirichlet boundary conditions. \\
We note that the operator $A$ generates a strongly continuous semigroup $\{e^{tA}\}_{t\geq 0}$ on $H$, which is analytic if $\alpha\in\left[\frac12,1\right)$ and immediately differentiable (but not analytic) if $\alpha\in\left(0,\frac12\right)$ (see \cite{chen-russel}). In the present paper, we are going to study the well-posedness of \eqref{abstract_formulation} in the sense of mild solutions.

\begin{defn}\label{def:mild-sol}
Assume \ref{H0}--\ref{H3} hold. We say that $(\Omega,\mathcal F,(\mathcal F_t)_{t\geq0},\mathbb P, W,X)$ is a weak mild solution to \eqref{abstract_formulation} if $(\Omega,\mathcal F,(\mathcal F_t)_{t\geq0},\mathbb P)$ is a complete filtered probability space, $X=\{X(t)\}_{t\in[0,T]}$ is a continuous $(\mathcal F_t)_{t\in[0,T]}$-adapted $H$-valued process and $W=(W(t))_{t\geq0}$ is a $U$-valued cylindrical Wiener process such that, for every $t\in[0,T]$,
\begin{align}
\label{mild_solution_generale}
X(t)=e^{tA}x+\int_0^te^{(t-s)A}\mathcal VC(s,X(s))ds+\int_0^t e^{(t-s)A}\mathcal GdW(s), \qquad \mathbb P\textrm{-a.s.}    
\end{align}
\end{defn}
\begin{rmk}
We underline that assumption \ref{H3} guarantees that the mild formulation \eqref{mild_solution_generale} is well defined. Indeed by \cite[Proposition 5.3]{AddBig} the condition $\sum_{n=1}^{ \infty}\mu_n^{-\alpha-2\delta}<\infty$ implies that the stochastic convolution process $\{W_A(t)\}_{t\geq0}$ defined by
    \begin{align}
W_A(t)=\int_0^t e^{(t-s)A}\mathcal GdW(s), \qquad \mathbb P\textrm{-a.s.},    
\end{align}
has continuous trajectories in $H$ $\mathbb P\textrm{-a.s.}$ Moreover, the condition $\beta<\alpha$ guarantees that the process 
\[
\left\{\int_0^te^{(t-s)A}\mathcal VC(s,X(s))ds\right\}_{t\geq 0}
\]
takes values in $H$, see Proposition \ref{prop:estens_etAB} in the next subsection.
\end{rmk}

We are going to prove weak and pathwise uniqueness in the following sense.

\begin{defn}\label{uniqueness}
Assume \ref{H0}--\ref{H3} hold. 
\begin{itemize}
\item[(Weak)] We say that weak uniqueness holds for \eqref{abstract_formulation} if for every $x\in H$ and every pair $(X_1,W_1)$ and $(X_2,W_2)$ of weak mild solutions to \eqref{abstract_formulation} defined on probability spaces $(\Omega_1,\mathcal F_1,\{\mathcal F_{1,t}\}_{t\geq0},\mathbb P_1)$ and $(\Omega_2,\mathcal F_2,\{\mathcal F_{2,t}\}_{t\geq0},\mathbb P_2)$, respectively, with $X_1(0)=X_2(0)=x$, it holds that $X_1$ and $X_2$ have the same law on $C([0,T]; H)$, namely that for every measurable bounded $\psi:C([0,T]; H)\to \R$
\[
\mathbb{E}_1\left[\psi(X_1)\right] =\mathbb{E}_2\left[\psi(X_2)\right].
\]
\item[(Pathwise)] We say that pathwise $($or strong$)$ uniqueness holds for \eqref{abstract_formulation}
if for every $x\in H$ and for every pair $(X_1,W)$ and $(X_2,W)$ of weak mild solutions to \eqref{abstract_formulation}
with the same $U$-cylindrical process $W$ defined on the same probability space $(\Omega,\mathcal F,\{\mathcal F_t\}_{t\geq0},\mathbb P)$ with $X_1(0)=X_2(0)=x$, it holds that
\[
\mathbb{P}\left(X_1(t)=X_2(t), \;\;\forall\,t\in [0,T] \right)=1.
\]
\end{itemize}
\end{defn}

In the next subsection, we will prove the functional properties of $A$, $\mathcal{V}$, $\mathcal{G}$, and $C$ that will be fundamental in the next sections.

\subsection{Functional properties of the operators}
\label{subs:funct_prop}
Assume \ref{H0}, \ref{H1}, \ref{H2} and \ref{H3} hold. In this subsection, we present some preparatory results that will be used in the following sections.

Since $\rho\neq 2\mu_n^{1/2-\alpha}$, we consider the following eigenvalues of $A$
\begin{align}
\label{autovalori}
\lambda_n^{\pm}=\frac{-\rho\mu_n^{\alpha}\pm\sqrt{\rho^2\mu_n^{2\alpha}-4\mu_n}}{2}, \qquad \lambda_n^++\lambda_n^-=-\rho\mu_n^\alpha, \qquad \lambda_n^+\lambda_n^-=\mu_n, \qquad n\in\N,    
\end{align}
with corresponding eigenvectors
\begin{align}
\label{autovettoriA}
\Phi_n^+=\left(\begin{matrix}\mu_n^{\frac12} e_n \\ \lambda_n^+e_n\end{matrix}\right), \qquad \Phi_n^-=\chi_n\left(\begin{matrix}\mu_n^{\frac12} e_n \\
\lambda_n^-e_n\end{matrix}\right), \qquad n\in\N,   
\end{align}
{where we fix the following normalization
\begin{equation}\label{Def:normalizzazione}
\|e_n\|_U:=\big(\mu_n+|\lambda_n^+|^2\big)^{-\frac12},\qquad 
\chi_n:=\left(\frac{\mu_n+|\lambda_n^+|^2}{\mu_n+|\lambda_n^-|^2}\right)^{\frac12},
\qquad n\in\N,
\end{equation}
so that $\|\Phi_n^+\|_H=\|\Phi_n^-\|_H=1$ for every $n\in\N$. Moreover we note that, if $\alpha\in(0,\frac12)$ then 
$\rho^2\mu_n^{2\alpha}-4\mu_n<0$ for $n$ large, so that $\lambda_n^\pm$ and $\Phi_n^\pm$ are 
complex. Accordingly, throughout the paper $U$ and $H$ are understood to be complexified.\\
Moreover, we set
\begin{equation}\label{Def:base-ON}
\widetilde e_k:=\frac{e_k}{\|e_k\|_U},\qquad k\in\N,
\end{equation}
so that $\{\widetilde e_k\}_{k\in\N}$ is an orthonormal basis of $U$. In particular, the Hilbert--Schmidt norm of operators defined on $U$ will always be computed with respect to $\{\widetilde e_k\}_{k\in\N}$.
}
It is possible to write $H=H^+\oplus H^-$, where
\begin{align}
\label{def_H+_H-}
H^+=\overline{\{\Phi_n^+:n\in\N\}}, \qquad H^-=\overline{\{\Phi_n^-:n\in\N\}}.    
\end{align}
Hence, any $h\in H$ can be written in a unique way as $h=h^++h^-$ with $h^+\in H^+$ and $h^-\in H^-$. 
{In particular $\{\Phi_n^+\}_{n\in\N}$ and $\{\Phi_n^-\}_{n\in\N}$ are orthonormal systems in $H$, but
\[
\scal{\Phi_n^+}{\Phi_n^-}_H=\chi_n\big(\mu_n+\lambda_n^+\overline{\lambda_n^-}\big)\|e_n\|_U^2
\neq 0,\qquad n\in\N .
\]}

Further, for every $t\geq 0$ the operators $A$ and $e^{tA}$, act on $D(A)$ and $H$, respectively, as
\begin{align}
Ah= & \sum_{n=1}^\infty\left(\lambda_n^+\langle h^+,\Phi_n^+\rangle_H\Phi_n^++\lambda_n^-\langle h^-,\Phi_n^-\rangle_H\Phi_n^-\right), && h\in D(A),\label{FourierA} \\
e^{tA}h
= & \sum_{n=1}^\infty\left(e^{\lambda_n^+t}\langle h^+,\Phi_n^+\rangle_H\Phi_n^++e^{\lambda_n^-t}\langle h^-,\Phi_n^-\rangle_H\Phi_n^-\right), && h\in H.\label{FourierETA}
\end{align}

{We note that $A$ is dissipative. Indeed, for every $h=(h_1,h_2)\in D(A)$, we have 
\[
\scal{Ah}{h}_H=\scal{\Lambda^{\frac12}h_2}{h_1}_U-\scal{\Lambda^{\frac12}h_1}{h_2}_U
-\rho\scal{\Lambda^{\alpha}h_2}{h_2}_U,
\]
and, since $\Lambda^{\frac12}$ is self-adjoint, the first two terms are one the conjugate of the other, so that
\[
{\rm Re}\scal{Ah}{h}_H=-\rho\|\Lambda^{\frac\alpha2}h_2\|_U^2\leq 0 .
\]
Hence $\{e^{tA}\}_{t\geq0}$ is a contraction semigroup on $H$.}

In the following, we will need an explicit expression for $h^+,h^-$. We notice that, for every $h\in H$, there exist unique sequences $\{a_n^+\}_{n\in\N}, \{a_n^-\}_{n\in\N}$ such that
\begin{align*}
h=\sum_{n\in\N}(a_n^+\Phi_n^++a_n^-\Phi_n^-).  \qquad h\in H.  
\end{align*}
It follows that
\begin{align*}
h^+=\sum_{n\in\N}a_n^+\Phi_n^+, \qquad h^-=\sum_{n\in\N}a_n^-\Phi_n^-, \qquad h\in H.    
\end{align*}
Comparing the above expressions, we infer that $a_n^\pm=\langle h^\pm,\Phi_n^\pm\rangle_H$ for every $n\in\N$ and
\begin{align}
\label{expr_h+_h-}
h^+=\sum_{n\in\N}\langle h^+,\Phi_n^+\rangle_H\Phi_n^+, \qquad 
h^-=\sum_{n\in\N}\langle h^-,\Phi_n^-\rangle_H\Phi_n^-, \qquad h\in H.    
\end{align}
We also define the operators $(-A)^\eta$, $\eta\in\R$, as
\begin{align*}
D((-A)^\eta):= & \Big\{h\in H:\sum_{n\in\N}\big(|\lambda_n^+|^{2\eta}\langle h^+,\Phi_n^+\rangle_H^2+|\lambda_n^-|^{2\eta}\langle h^-,\Phi_n^-\rangle_H^2\big)<\infty\Big\}, \\   
(-A)^\eta h:= & \sum_{n=1}^\infty \big((-\lambda_n^+)^\eta\langle h^+,\Phi_n^+\rangle_H\Phi_n^++(-\lambda_n^-)^\eta\langle h^-,\Phi_n^-\rangle_H\Phi_n^-\big), \qquad h\in D((-A)^\eta).
\end{align*}
Clearly, $D((-A)^{\eta})=H$ if $\eta\leq0$. For every $u\in U$, the element $\left(\begin{matrix}
0 \\ u
\end{matrix}\right)$ can be written as
\begin{align*}
\left(
\begin{matrix}
0 \\ u
\end{matrix}
\right)= \sum_{n=1}^\infty\left(b_n^+ u_n\Phi_n^++b_n^- u_n\Phi_n^-\right), \qquad u_n:=\langle u,\widetilde e_n\rangle_U, \qquad n\in\N,
\end{align*}
for suitable coefficients $b_n^+,b_n^-$, $n\in\N$. It follows that, for every $u\in D(\Lambda^\beta)$, we have
\begin{align}
\label{lambda_beta_n}
\left(\begin{matrix}
0 \\ \Lambda^\beta u    
\end{matrix}\right)
= \sum_{n=1}^\infty\mu_n^\beta u_n\left(b_n^+\Phi_n^++b_n^-\Phi_n^-\right), \qquad u_n=\langle u,\widetilde e_n\rangle_U, \qquad n\in\N.
\end{align}
Finally, we recall that, from the above construction, it follows that, if $\alpha\in\left(0,\frac12\right]$, then \begin{align}
& {\rm Re}(\lambda_n^{+}), \ {\rm Re}(\lambda_n^{-})\sim -\mu_n^\alpha, \quad |\lambda_n^{+}|,|\lambda_n^{-}|\sim \mu_n^{\frac12}, \quad  \,  \|e_n\|_{U}\sim \mu_n^{-\frac12}, \quad |\lambda_n^+-\lambda_n^-|\sim \mu_n^{\frac12}, \notag \\
& |b_n^{\pm}|,\chi_n\sim 1,
\label{damped_stime_coefficienti_avl}
\end{align}
eventually with respect to $n\in\N$ (see for instance formulae \cite[(2.3.14)-(2.3.18)]{trig}). The case $\alpha\in\left[\frac12,1\right)$ is analogously treated. We stress that, in this case, the asymptotic behaviour in \eqref{damped_stime_coefficienti_avl} is replaced by
\begin{align}
& {\rm Re}(\lambda_n^+)\sim -\mu_n^{1-\alpha}, \quad {\rm Re}(\lambda_n^-)\sim -\mu_n^\alpha, \quad |\lambda_n^+|\sim \mu_n^{1-\alpha}, \quad |\lambda_n^-|\sim\mu_n^\alpha, \quad  \|e_n\|_{U}\sim \mu_n^{-\frac12}, \notag \\
& |\lambda^-_n-\lambda_n^+|\sim \mu_n^{\alpha}, \quad |b_n^-|\sim 1, \quad \chi_n,|b_n^+|\sim \mu_n^{\frac12-\alpha}
\label{damped_stime_coefficienti_avl_2}
\end{align}
definitively with respect to $n\in\N$. 

Fix $n\in\N$. For every $\zeta\in\R$, we denote by $Q_n$ the projection of $U$ onto $U_n={\rm span}\{e_1,\ldots,e_n\}\subset D(\Lambda^\zeta)$ and we set $\Lambda_n^\zeta=\Lambda^\zeta Q_n$, $\mathcal V_n=\mathcal VQ_n$ and $\mathcal G_n=\mathcal GQ_n$.
Further, we denote by $P_n$ the projection of $H$ onto $H_n:={\rm span}\{\Phi_j^+,\Phi_j^-:j=1,\ldots,n\}$ and by $A_n$ and $e^{tA_n}$ the projection of $A$ and $e^{tA}$ onto $H_n$, respectively. It follows that, for every $n\in\N$, the operator $A_n=P_nA=AP_n\in\mathscr L(H_n)$, it generates the uniformly continuous semigroup $\{e^{tA_n}\}_{t\geq 0}$ and the following representation formulae hold true:
\begin{align}
P_n h = & \sum_{j=1}^n\left(\langle h^+,\Phi_j^+\rangle_H\Phi_j^++\langle h^-,\Phi_j^-\rangle_H\Phi_j^-\right), && h\in H, \label{rep_P_nh} \\
A_nh= & \sum_{j=1}^n\left(\lambda_j^+\langle h^+,\Phi_j^+\rangle_H\Phi_j^++\lambda_j^-\langle h^-,\Phi_j^-\rangle_H\Phi_j^-\right), && h\in H, \label{rep_A_n}\\
e^{tA_n}h=e^{tA}P_nh=P_ne^{tA}h
= & \sum_{j=1}^n\left(e^{\lambda_j^+t}\langle h^+,\Phi_j^+\rangle_H\Phi_j^++e^{\lambda_j^-t}\langle h^-,\Phi_j^-\rangle_H\Phi_j^-\right), && h\in H, \ t\geq0. \label{rep_etA_n}
\end{align}
We note that $P_n$ in \eqref{rep_P_nh} is the spectral projection onto $H_n$ but it is not an orthogonal projection.
First, we need to estimate the norm of the operator $e^{tA_n}\mathcal V_n:U\to H_n$ for $t\in (0,\infty)$ and $n\in\N$, uniformly with respect to $n\in\N$.
\begin{lemma}
\label{lemma:singolarita_0_drift}
Assume \ref{H0}, \ref{H1} and \ref{H3} hold. Then, there exist positive constants $C_1,C_2$, independent of $t\in(0,\infty)$ and $n\in\N$, such that
\begin{align}
\label{stima_etA_nB_n_1-v}
\left\|e^{tA_n}\mathcal V_n\right\|_{\mathscr L(U,H_n)}\leq C_1t^{-\gamma}e^{-C_2\mu_1^{\alpha\wedge (1-\alpha)}t}, \qquad t\in(0,\infty), \qquad n\in\N.
\end{align}
where
\begin{equation}\label{gamma-unificato}
\gamma:=\begin{cases}
\dfrac{\beta}{\alpha}, & \alpha\in\left(0,\frac12\right],\\[2mm]
\max\left\{\dfrac{\beta}{\alpha};\ \dfrac{\beta+\frac12-\alpha}{1-\alpha}\right\}, 
& \alpha\in\left[\frac12,1\right),
\end{cases}
\end{equation}
We notice that, for $\alpha\in[\frac12,1)$, $\gamma=(\beta+\frac12-\alpha)(1-\alpha)^{-1}$ if and only if $\beta\geq\frac14$ and $\alpha\leq 2\beta$. 
\end{lemma}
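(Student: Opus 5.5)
We compute $e^{tA_n}\mathcal V_n u$ explicitly in the basis $\{\Phi_j^\pm\}$, using \eqref{lambda_beta_n} and \eqref{rep_etA_n}. For $u\in U$ with $u_j=\langle u,\widetilde e_j\rangle_U$ we have
\begin{align*}
e^{tA_n}\mathcal V_n u=\sum_{j=1}^n \mu_j^\beta u_j\left(b_j^+e^{\lambda_j^+t}\Phi_j^++b_j^-e^{\lambda_j^-t}\Phi_j^-\right).
\end{align*}
The systems $\{\Phi_j^+\}$ and $\{\Phi_j^-\}$ are each orthonormal, so by the triangle inequality
\begin{align*}
\|e^{tA_n}\mathcal V_n u\|_H^2\le 2\sum_{j=1}^n \mu_j^{2\beta}|u_j|^2\left(|b_j^+|^2e^{2{\rm Re}(\lambda_j^+)t}+|b_j^-|^2e^{2{\rm Re}(\lambda_j^-)t}\right).
\end{align*}
Hence it suffices to prove the scalar bound
\begin{align*}
\sup_{j\in\N}\mu_j^{\beta}|b_j^\pm|e^{{\rm Re}(\lambda_j^\pm)t}\le C_1t^{-\gamma}e^{-C_2\mu_1^{\alpha\wedge(1-\alpha)}t},
\end{align*}
since $\sum_j|u_j|^2=\|u\|_U^2$. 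This bound is uniform in $n$ automatically.

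For the scalar estimate, the asymptotics \eqref{damped_stime_coefficienti_avl}–\eqref{damped_stime_coefficienti_avl_2} hold only eventually, so we first check that for every $j$ one has ${\rm Re}(\lambda_j^\pm)\le -c\mu_j^{a_\pm}$ and $|b_j^\pm|\le c'\mu_j^{b_\pm}$, with exponents read off from those formulae. For the finitely many remaining indices, ${\rm Re}(\lambda_j^\pm)<0$ because $\lambda_j^+\lambda_j^-=\mu_j>0$, $\lambda_j^++\lambda_j^-<0$ and $\rho\ne2\mu_j^{1/2-\alpha}$, so the constants can simply be adjusted. We then split $e^{-c\mu_j^{a}t}=e^{-\frac c2\mu_j^at}e^{-\frac c2\mu_j^at}$. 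The second factor is bounded by $e^{-\frac c2\mu_1^{a}t}$ because $\mu_j\ge\mu_1$. For the first factor we use $\sup_{x>0}x^{s}e^{-\frac c2x t}\le C_s t^{-s}$ with $x=\mu_j^a$, which gives
\begin{align*}
\mu_j^{\beta+b}e^{-\frac c2\mu_j^at}\le C\,t^{-(\beta+b)/a}.
\end{align*}
This requires $\beta+b\ge0$; if $\beta+b<0$, the factor is bounded by a constant times $\mu_1^{\beta+b}$.

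The case analysis is as follows. If $\alpha\in(0,\frac12]$, both branches have $a=\alpha$ and $b=0$, giving $t^{-\beta/\alpha}$ and rate $\mu_1^\alpha$. If $\alpha\in[\frac12,1)$:
\begin{itemize}
\item the minus branch has $a=\alpha$, $b=0$, giving $t^{-\beta/\alpha}$;
\item the plus branch has $a=1-\alpha$, $b=\frac12-\alpha$, giving $t^{-(\beta+\frac12-\alpha)/(1-\alpha)}$ when $\beta+\frac12-\alpha\ge0$, and a bounded factor otherwise.
\end{itemize}
The common exponential rate is $\mu_1^{\alpha\wedge(1-\alpha)}$. Combining the two branches gives $t^{-\gamma}$ for small $t$ with $\gamma$ as in \eqref{gamma-unificato}. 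For large $t$, a smaller power of $t^{-1}$ is absorbed by sacrificing part of the exponential rate, since $t^{-s_1}e^{-ct}\le C t^{-s_2}e^{-c't}$ for $s_1<s_2$ and $c'<c$. A negative exponent is likewise absorbed into $t^{-\gamma}e^{-C_2\mu_1^{\alpha\wedge(1-\alpha)}t}$ because $\gamma\ge0$. The closing remark of the lemma is then elementary algebra comparing $\beta/\alpha$ with $(\beta+\frac12-\alpha)/(1-\alpha)$: the inequality reduces to $(2\alpha-1)(2\beta-\alpha)\ge0$, i.e.\ $\alpha\le2\beta$ when $\alpha>\frac12$. Note that with $\beta<\frac12$ this forces $\beta\ge\frac14$.

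The main obstacle is the uniformity of the coefficient bounds over all $j$, not only eventually. In particular we need the estimate $|b_j^+|\lesssim\mu_j^{1/2-\alpha}$ for $\alpha>\frac12$. To get it, we would solve explicitly for $b_j^\pm$ from $(0,e_j/\|e_j\|)^T=b_j^+\Phi_j^++b_j^-\Phi_j^-$. The first component gives $b_j^++\chi_jb_j^-=0$, and the second yields
\begin{align*}
b_j^+=\frac{1}{\|e_j\|_U(\lambda_j^+-\lambda_j^-)},
\end{align*}
up to sign, with a similar formula for $b_j^-$. The asymptotics then follow from $|\lambda_j^+-\lambda_j^-|$ and $\|e_j\|_U$ in \eqref{damped_stime_coefficienti_avl_2}. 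Nondegeneracy $\lambda_j^+\ne\lambda_j^-$ for every $j$ is exactly the hypothesis $\rho\ne2\mu_j^{1/2-\alpha}$.
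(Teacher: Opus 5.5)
Your proof is correct and follows the paper's argument. You expand $e^{tA_n}\mathcal V_n u$ in the two orthonormal families $\{\Phi_k^\pm\}$, bound each branch with the asymptotics of $|b_k^\pm|$ and ${\rm Re}(\lambda_k^\pm)$, absorb the powers of $\mu_k$ via $\sup_{x>0}x^se^{-cxt}\lesssim t^{-s}$, and compare the two exponents. Your extra care fills in details the paper leaves implicit: the finitely many non-asymptotic indices, the explicit formula for $b_k^\pm$, and the large-$t$ absorption (the paper only compares the powers on $(0,1]$). The one slip is cosmetic: what forces $\beta\ge\frac14$ is $\alpha\ge\frac12$ together with $\alpha\le 2\beta$, not $\beta<\frac12$.
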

\begin{proof}
For every $u\in U_n$, from \eqref{lambda_beta_n} and \eqref{rep_etA_n} we get
\begin{align}
\label{e^tAB_representation}
e^{tA_n}\mathcal V_nu
= & \sum_{k=1}^n\mu_k^\beta u_k\left(e^{\lambda_k^+t}b_k^+\Phi_k^++e^{\lambda_k^- t}b_k^-\Phi_k^-\right), \qquad t\in(0,T],
\end{align}
which, from \eqref{damped_stime_coefficienti_avl}, gives, for $\alpha\in\left(0,\frac12\right]$, $n\in\N$ and $t\in(0,\infty)$,
\begin{align}
\label{stima_espl_0_1}
\left\|e^{tA_n}\mathcal V_n u\right\|_{H_n}^2
\leq & C\sum_{k=1}^n\mu_k^{2\beta}(|b_k^+|^2e^{-2\rho \mu_k^\alpha t}+|b_k^-|^2e^{-2\rho \mu_k^\alpha t})u_k^2
\leq C_1 t^{-\frac{2\beta}{\alpha}}e^{-\rho\mu_1^\alpha t}\sum_{k=1}^n u_k^2,
\end{align}
for some positive constant $C_1$ independent of $t>0$ and $n\in\N$. \\
If $\alpha\in\left(\frac12,1\right)$, then from \eqref{damped_stime_coefficienti_avl_2} and \eqref{e^tAB_representation} we infer that
\begin{align*}
\left\|e^{tA_n}\mathcal V_nu\right\|_{H_n}^2
\leq & C\sum_{k=1}^n\mu_k^{2\beta}(|b_k^+|^2e^{-2\rho\mu_k^{1-\alpha}t}+|b_k^-|^2 e^{-2\rho\mu_k^\alpha t})u_k^2 \\
\leq & C\sum_{k=1}^n (\mu_k^{2\beta+1-2\alpha}e^{-2\rho \mu_k^{1-\alpha}t}+\mu_k^{2\beta}e^{-2\rho \mu_k^\alpha t})u_k^2 \\
\leq & C_1 (t^{-\left(\frac{2\beta+1-2\alpha}{1-\alpha}\vee 0\right)}+t^{-\frac{2\beta}{\alpha}})e^{-\rho\mu_1^{1-\alpha}t}\sum_{k=1}^nu_k^2, \qquad t\in(0,\infty),
\end{align*}
for some positive constant $C_1$ independent of $t\in(0,\infty)$ and $n\in\N$. To conclude, we notice that, if $\beta\leq \frac14$, then $\frac{2\beta+1-2\alpha}{1-\alpha}\geq \frac{2\beta}{\alpha}$ if and only if $\alpha\in\left(2\beta,\frac12\right)$, which implies that, on $(0,1]$, we have $t^{-\frac{2\beta+1-2\alpha}{1-\alpha}}\leq t^{-\frac{2\beta}{\alpha}}$ if  $\alpha\in\left(\frac12,1\right)$. \\
Assume now that $\beta>\frac14$. Then, $\frac{2\beta+1-2\alpha}{1-\alpha}\geq \frac{2\beta}{\alpha}$ holds true whenever $\alpha\in\left(\frac12,2\beta\right)$. 
\end{proof}

\begin{prop}
\label{prop:estens_etAB}
Assume \ref{H0}, \ref{H1} and \ref{H3} hold. For every $t>0$, the operator $e^{tA}\mathcal V$, a priori defined on $D(\Lambda^\beta)$, extends to a linear bounded operator from $U$ into $H$ which we still denote by $e^{tA}\mathcal V$. For every $u\in U$, the function $t\mapsto e^{tA}\mathcal V u$ is continuous from $(0,\infty)$ with values in $H$ and there exist positive constants $C_1,C_2$, independent of $t\in(0,\infty)$, which fulfill
\begin{align}
\label{stima_tilde S}
\|e^{tA}\mathcal V\| _{\mathscr L(U,H)}\leq \frac{C_1e^{-C_2\mu_1^{\alpha\wedge (1-\alpha)}t}}{t^{\gamma}}, \qquad t\in(0,\infty),
\end{align}
where
\begin{align*}
\gamma=
\begin{cases}
\displaystyle\frac\beta\alpha , & \alpha\in\left(0,\frac12\right], 
\\[2mm] \displaystyle \max\left\{\frac\beta\alpha;\frac{\beta+\frac12-\alpha}{1-\alpha}\right\}, & 
\alpha\in\left[\frac12,1\right).  
\end{cases}
\end{align*}
\end{prop}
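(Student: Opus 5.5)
The plan is to derive the proposition from Lemma \ref{lemma:singolarita_0_drift} by a density and limiting argument in $n$. The key point is that the constants in \eqref{stima_etA_nB_n_1-v} do not depend on $n$.

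\textbf{Step 1 (compatibility with the projections).} For $u\in D(\Lambda^\beta)$ I first check that $e^{tA}\mathcal V Q_n u = e^{tA_n}\mathcal V_n u$. By \eqref{lambda_beta_n}, the vector $\mathcal V Q_n u$ only involves the modes $\Phi_k^\pm$ with $k\le n$, so it lies in $H_n$ and $P_n$ acts on it as the identity. Next I show that $e^{tA}\mathcal V Q_n u \to e^{tA}\mathcal V u$ in $H$ as $n\to\infty$. Since $Q_n u\to u$ in $D(\Lambda^\beta)$, the vectors $\mathcal V Q_n u\to \mathcal V u$ in $H$, and $e^{tA}$ is bounded (indeed a contraction). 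The lemma then gives, for $u\in D(\Lambda^\beta)$,
\[
\|e^{tA}\mathcal V u\|_H=\lim_{n\to\infty}\|e^{tA_n}\mathcal V_nu\|_H\le C_1t^{-\gamma}e^{-C_2\mu_1^{\alpha\wedge(1-\alpha)}t}\|u\|_U .
\]
Because $D(\Lambda^\beta)$ is dense in $U$ (it contains all the $e_k$), $e^{tA}\mathcal V$ extends uniquely to a bounded operator $U\to H$ satisfying \eqref{stima_tilde S}. Alternatively, one can define the extension directly by the series \eqref{e^tAB_representation} with $n=\infty$. The estimates in the proof of the lemma show that this series converges in $H$ for every $u\in U$, since its partial sums are Cauchy with the same weighted bounds. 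This gives an explicit representation formula, which I will reuse in Step 2.

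\textbf{Step 2 (continuity in $t$).} Fix $u\in U$ and $t_0>0$, and restrict to $t\in[t_0/2,2t_0]$. By the uniform bound just proved, $\|e^{tA}\mathcal V\|\le C(t_0)$ on this interval. Given $\varepsilon>0$, choose $v\in D(\Lambda^\beta)$ with $\|u-v\|_U<\varepsilon$. For $v$ the map $t\mapsto e^{tA}\mathcal V v$ is continuous, by strong continuity of the semigroup applied to the fixed vector $\mathcal Vv\in H$. An $\varepsilon/3$ argument then gives continuity at $t_0$. Alternatively, I can apply dominated convergence in the series representation: each term $e^{\lambda_k^\pm t}$ is continuous, and the terms are dominated uniformly for $t\ge t_0/2$.

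\textbf{Main obstacle.} The delicate point is making sure the $H$-norm estimates are legitimate despite the basis $\{\Phi_k^\pm\}$ not being orthonormal ($\Phi_k^+$ and $\Phi_k^-$ are not orthogonal). The estimate $\|\sum_k c_k^+\Phi_k^+ + c_k^-\Phi_k^-\|^2\lesssim\sum_k(|c_k^+|^2+|c_k^-|^2)$ does hold: different indices $k$ give orthogonal pairs, because they live on different $e_k$, and within each pair the triangle inequality costs only a factor $2$. This is already implicit in the proof of the lemma, so I would simply invoke the lemma with its $n$-independent constants and avoid redoing it. The only other thing to verify is that $t\mapsto e^{tA}\mathcal V v$ is continuous for $v\in D(\Lambda^\beta)$, which follows from $\mathcal V v\in H$.
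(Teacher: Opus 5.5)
Your proposal is correct and follows essentially the same route as the paper. You both use the $n$-independent bound of Lemma \ref{lemma:singolarita_0_drift}, the identity $e^{tA}\mathcal V Q_n u=e^{tA_n}\mathcal V_n u$, and a density/Cauchy argument to obtain the bounded extension; your $\varepsilon/3$ continuity argument is equivalent to the paper's uniform convergence on compact subsets of $(0,\infty)$, and your explicit check that $e^{tA}\mathcal V Q_n u\to e^{tA}\mathcal V u$ for $u\in D(\Lambda^\beta)$ (so the limit genuinely extends the original operator) is a detail the paper leaves implicit.
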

\begin{proof}
We prove the statement when $\gamma=\frac \beta\alpha$, since the other case can be obtained arguing similarly. Notice that, from \eqref{stima_etA_nB_n_1-v} with $\gamma=\frac\beta\alpha$ and the fact that, for every $u\in U$, every $n,m\in\N$ with $m\leq n$ and every $t>0$, $u_n\in D(\Lambda^\beta)$ and $e^{tA}\mathcal Vu_m=e^{tA_n}\mathcal V_n u_m$, we infer that
\begin{align}
\label{cauchy_sequence_etA_nB_n}
\left\|e^{tA}\mathcal V u_m-e^{tA}\mathcal V u_n\right\|_H\leq {C_1}t^{-\frac\beta\alpha}e^{-C_2\mu_1^{\alpha\wedge (1-\alpha)}t}\|u_m-u_n\|_U  
\end{align}
for every $m,n\in\N$, where $C_1,C_2$ are the positive constants, independent of $t\in(0,\infty)$ and $n,m\in\N$, which appear in \eqref{stima_etA_nB_n_1-v}. It follows that, for every $t>0$ and $u\in U$, the sequence $\{e^{tA}\mathcal V u_n\}_{n\in\N}$ is a Cauchy sequence in $H$ and we denote by $e^{tA}\mathcal Vu$ its limit. \\ 
For every $t>0$, the operator $e^{tA}\mathcal V:U\to H$, defined in this way, is linear since it is pointwise limit of linear operators. Further, again from \eqref{stima_etA_nB_n_1-v} we deduce that $\|e^{tA}\mathcal V\|_{\mathscr L(U,H)}\leq C_1t^{-\frac\beta\alpha}e^{-C_2\mu_1^{\alpha\wedge (1-\alpha)}t}$. \\
To conclude the proof, we notice that, fixed $u\in U$, the convergence of $\left(e^{tA}\mathcal Vu_n\right)_{n\in\N}$ to $e^{tA}\mathcal Vu$ is uniform on compact subsets of $(0,\infty)$. Since the uniform limit of continuous functions is continuous, we obtain the claim. 
\end{proof}

We conclude this subsection by showing that, given a function $F\in C_b^\theta(H;H)$, its projections $H\ni h\mapsto (F(h))^+$ and $H\ni h\mapsto (F(h))^-$ on $H^+$ and $H^-$, respectively, defined in \eqref{expr_h+_h-}, preserve the same regularity.

\begin{lemma}\label{lem:Norma-H}
For every $F\in C_b^\theta(H;H)$ and $\theta\in(0,1)$, the functions
$H\ni h\mapsto (F(h))^+$ and $H\ni h\mapsto (F(h))^-$ 
belong to $C_b^\theta(H;H)$. Further, there exists a positive constant $c$ such that $[F(\cdot)^+]_{C_b^\theta(H;H)}$, $[F(\cdot)^-]_{C_b^\theta(H;H)}\leq c[F]_{C_b^\theta(H;H)}$.
\end{lemma}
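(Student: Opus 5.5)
The plan is to reduce the statement to the boundedness of the linear map $\Pi^+:H\to H$, $h\mapsto h^+$ (and similarly $\Pi^-:h\mapsto h^-$). Indeed, $(F(\cdot))^\pm=\Pi^\pm\circ F$, and $\Pi^\pm$ is linear. Hence, once $\Pi^\pm\in\mathscr L(H)$ is known, we immediately get
\[
\|(F(h))^\pm-(F(k))^\pm\|_H\le \|\Pi^\pm\|_{\mathscr L(H)}\|F(h)-F(k)\|_H\le \|\Pi^\pm\|_{\mathscr L(H)}[F]_{C_b^\theta(H;H)}\|h-k\|_H^\theta .
\]
Similarly, $\sup_h\|(F(h))^\pm\|_H\le\|\Pi^\pm\|\,\|F\|_\infty$, and continuity is inherited. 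Thus the claim holds with $c=\max\{\|\Pi^+\|,\|\Pi^-\|\}$, and the whole proof is the boundedness of the spectral projections.

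To bound $\Pi^\pm$, I would exploit the block structure: for distinct $n\ne m$ the two-dimensional spaces $E_n:={\rm span}\{\Phi_n^+,\Phi_n^-\}=\{(a e_n,b e_n)\}$ are mutually orthogonal in $H=U\times U$, since the $e_n$ are orthogonal in $U$. So $H=\bigoplus_n E_n$ orthogonally, and $\Pi^+$ acts diagonally as the oblique projection $\pi_n$ in $E_n$ onto ${\rm span}\{\Phi_n^+\}$ along ${\rm span}\{\Phi_n^-\}$. Consequently $\|\Pi^+\|=\sup_n\|\pi_n\|$. For two unit vectors with $c_n:=|\langle\Phi_n^+,\Phi_n^-\rangle_H|<1$, the oblique projection has norm $(1-c_n^2)^{-1/2}$. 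This is elementary: write $h=a\Phi_n^++b\Phi_n^-$ and use $\|h\|^2\ge(1-c_n^2)|a|^2$. Since $c_n<1$ for each $n$ because the eigenvalues are distinct ($\rho\ne2\mu_n^{1/2-\alpha}$ by \ref{H3}), it only remains to show $\sup_n c_n<1$, i.e.\ that the angle between $\Phi_n^+$ and $\Phi_n^-$ stays uniformly away from zero.

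That uniform angle estimate is the main obstacle, and I would handle it via the asymptotics \eqref{damped_stime_coefficienti_avl}--\eqref{damped_stime_coefficienti_avl_2}. In normalized coordinates, $\Phi_n^\pm$ is proportional to $(\mu_n^{1/2},\lambda_n^\pm)\in\mathbb C^2$. So $1-c_n^2$ equals $|\det|^2/(\|v^+\|^2\|v^-\|^2)$ with $\det=\mu_n^{1/2}(\lambda_n^--\lambda_n^+)$, giving
\[
1-c_n^2=\frac{\mu_n|\lambda_n^+-\lambda_n^-|^2}{(\mu_n+|\lambda_n^+|^2)(\mu_n+|\lambda_n^-|^2)} .
\]
\emph{Case $\alpha\le\frac12$.} Here $|\lambda_n^\pm|\sim\mu_n^{1/2}$ and $|\lambda_n^+-\lambda_n^-|\sim\mu_n^{1/2}$, so the ratio is $\sim\mu_n^2/\mu_n^2\sim1$.

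\emph{Case $\alpha>\frac12$.} Here $|\lambda_n^+|\sim\mu_n^{1-\alpha}\lesssim\mu_n^{1/2}$, $|\lambda_n^-|\sim\mu_n^\alpha$ and $|\lambda_n^+-\lambda_n^-|\sim\mu_n^\alpha$, so the ratio is $\sim\mu_n\mu_n^{2\alpha}/(\mu_n\cdot\mu_n^{2\alpha})\sim1$.

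In both cases the ratio is bounded below eventually, and it is positive for the finitely many remaining $n$. The borderline $\alpha=\frac12$ is covered by either case, since $\rho^2\ne4$ keeps the discriminant nonzero. Hence $\sup_n c_n<1$, $\Pi^\pm$ are bounded, and the lemma follows.
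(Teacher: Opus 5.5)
Your proof is correct and is essentially the paper's argument. The paper also reduces the lemma to boundedness of $h\mapsto h^\pm$. It extracts $\langle h^+,\Phi_n^+\rangle_H$ by pairing $h$ with the unit vector $f_n\in{\rm span}\{\Phi_n^+,\Phi_n^-\}$ orthogonal to $\Phi_n^-$, then applies Bessel's inequality to the orthonormal system $\{f_n\}$. Its constant $\chi_n\mu_n^{1/2}|\lambda_n^+-\lambda_n^-|\,\|e_n\|_U^2$ is exactly your $\sqrt{1-c_n^2}$ with $c_n=|\langle\Phi_n^+,\Phi_n^-\rangle_H|$, and it is shown to be $\sim1$ via \eqref{damped_stime_coefficienti_avl}--\eqref{damped_stime_coefficienti_avl_2} just as you do. Your write-up is slightly more complete, since it also treats the finitely many $n$ before the asymptotics apply and the sup-norm and continuity parts.
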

\begin{proof}
We recall that, for every $h\in H$ and $n\in\N$,
\begin{align}
\label{dec_h}
h=\sum_{n\in\N}\left(\langle h^+,\Phi_n^+\rangle_H\Phi_n^++\langle h^-,\Phi_n^-\rangle_H\Phi_n^-\right), \qquad \Phi_n^+=\begin{pmatrix}
\mu_n^\frac12 e_n \\ \lambda_n^+ e_n    
\end{pmatrix}, \ \Phi_n^-=\chi_n\begin{pmatrix}
\mu_n^\frac12 e_n \\ \lambda_n^- e_n    
\end{pmatrix}.
\end{align}
If we fix $n\in\N$ and take $f_n=\chi_n\begin{pmatrix}
-\overline{\lambda_n^-} e_n \\ \mu_n^\frac12 e_n
\end{pmatrix}$, then $\|f_n\|_H=1$, $\langle f_n,\Phi_n^-\rangle_H=0$ and, from \eqref{dec_h}, we infer 
\begin{align*}
\langle h,f_n\rangle_H=\langle h^+,\Phi_n^+\rangle_H\langle \Phi_n^+,f_n\rangle_H=\langle h^+, \Phi_n^+\rangle_H\chi_n\mu_n^\frac12(\lambda_n^+-\lambda_n^-)\|e_n\|_U^2,    
\end{align*}
which gives 
\begin{align*}
\langle h^+, \Phi_n^+\rangle_H
= \frac{1}{\chi_n\mu_n^\frac12(\lambda_n^+-\lambda_n^-)\|e_n\|_U^2}\langle h,f_n\rangle_H, \qquad n\in\N.
\end{align*}
Since $\{f_n:n\in\N\}$ is an orthonormal system in $H$, 
from \eqref{expr_h+_h-} it follows that
\begin{align}
\label{stima_h+}
\|h^+\|_H^2=\sum_{n\in\N}|\langle h^+,\Phi_n^+\rangle_H|^2
\leq \sup_{n\in\N}\frac{1}{\chi_n^2\mu_n |\lambda_n^+-\lambda_n^-|^2\|e_n\|_U^4}\|h\|_H^2.
\end{align}
It follows that, for every $h,k\in H$,
\begin{align*}
\|(F(h))^+-(F(k))^+\|_H
= & \|(F(h)-F(k))^+\|_H 
\leq \sup_{n\in\N}\frac{1}{\chi_n\mu_n^\frac12|\lambda_n^+-\lambda_n^-|\|e_n\|_U^2}\|F(h)-F(k)\|_H \\
\leq & \sup_{n\in\N}\frac{1}{\chi_n\mu_n^\frac12|\lambda_n^+-\lambda_n^-|\|e_n\|_U^2}[F]_{C^\theta_b(H;H)}\|h-k\|_H^\theta.
\end{align*}
To conclude, it is enough to stress that, if $\alpha\in\left(0,\frac12\right]$, then from \eqref{damped_stime_coefficienti_avl} it follows that
\begin{align*}
\chi_n\mu_n^\frac12|\lambda_n^+-\lambda_n^-|\|e_n\|^2_U\sim \mu_n^{\frac12+\frac12-1}=1, \qquad n\to \infty,    
\end{align*}
while, if $\alpha\in\left[\frac12,1\right)$, then from \eqref{damped_stime_coefficienti_avl_2} we deduce that
\begin{align*}
\chi_n\mu_n^\frac12|\lambda_n^+-\lambda_n^-|\|e_n\|^2_U\sim \mu_n^{\frac12-\alpha+\frac12+\alpha-1}=1, \qquad n\to \infty.     
\end{align*}
Analogous computations give, for $h^-$,
\begin{align}
\label{stima_h-}
\|h^-\|_H^2=\sum_{n\in\N}|\langle h^-,\Phi_n^-\rangle_H|^2
\leq \sup_{n\in\N}\frac{1}{\chi_n^2\mu_n |\lambda_n^+-\lambda_n^-|^2\|e_n\|_U^4}\|h\|_H^2
\end{align}
and arguing as above we conclude.
\end{proof}
\begin{rmk}\label{norm-Pn}
By \eqref{rep_P_nh}, \eqref{stima_h+} and \eqref{stima_h-}, we obtain
\[
c:=\sup_{n\in\N}\|P_n\|_{\mathscr L(H)}<\infty.
\]
Indeed, for every $n\in\N$, we have $\|P_nh\|_H\leq\|h^+\|_H+\|h^-\|_H\leq c_0\|h\|_H$ for a constant $c_0$ 
independent of $n$. For the same reason $P_nh\to h$ in $H$ as $n\to\infty$, for every $h\in H$.
\end{rmk}

\begin{rmk}\label{rmk:exp-stab}
The semigroup $\{e^{tA}\}_{t\geq 0}$ is exponentially stable. Indeed, since 
$\lambda_n^+\lambda_n^-=\mu_n>0$ and $\lambda_n^++\lambda_n^-=-\rho\mu_n^\alpha<0$, from 
\eqref{autovalori} we infer that ${\rm Re}\big(\lambda_n^{(i)}\big)<0$ for every $n\in\N$ and 
$i\in\{-,+\}$, while \eqref{damped_stime_coefficienti_avl} and 
\eqref{damped_stime_coefficienti_avl_2} give 
$-{\rm Re}\big(\lambda_n^{(i)}\big)\sim\mu_n^{\alpha\wedge(1-\alpha)}\to\infty$ as $n\to\infty$. 
Therefore
\[
\eta:=\inf_{n\in\N,\ i\in\{-,+\}}\Big(-{\rm Re}\big(\lambda_n^{(i)}\big)\Big)>0 .
\]
Since $\{\Phi^+_n\}_{n\in\N}$ and $\{\Phi^-_n\}_{n\in\N}$ are orthonormal systems in $H$, from 
\eqref{FourierETA}, \eqref{expr_h+_h-}, \eqref{stima_h+} and \eqref{stima_h-} we deduce that, for 
every $h\in H$ and $t\geq 0$,
\begin{align*}
\norm{e^{tA}h}^2_H
& \leq 2\sum_{n\in\N}\big|e^{\lambda_n^+t}\big|^2\big|\scal{h^+}{\Phi_n^+}_H\big|^2
+2\sum_{n\in\N}\big|e^{\lambda_n^-t}\big|^2\big|\scal{h^-}{\Phi_n^-}_H\big|^2 \\
& \leq 2e^{-2\eta t}\left(\norm{h^+}_H^2+\norm{h^-}_H^2\right)
\leq 2c_0^2e^{-2\eta t}\norm{h}^2_H,
\end{align*}
where $c_0$ is the constant of Remark \ref{norm-Pn}. Hence
\begin{equation}\label{contrazione}
\norm{e^{tA}}_{\mathscr L(H)}\leq \sqrt2\,c_0\,e^{-\eta t},\qquad t\geq0,
\end{equation}
and, by \eqref{rep_etA_n} and Remark \ref{norm-Pn}, 
$\norm{e^{tA_n}}_{\mathscr L(H_n)}\leq \sqrt 2\,cc_0\,e^{-\eta t}$ for every $n\in\N$ and $t\geq0$.
\end{rmk}


\section{The Kolmogorov equation}\label{sec:kolmo}

In this section, we study a family of Kolmogorov equations which are essential to perform our It\^o-Tanaka trick. Throughout this section, we will assume the following stronger version of condition \ref{H2}.
\begin{enumerate}[start=3,label={{(H\arabic*b})}]
\item \label{H2b} $C:[0,\infty)\times H\rightarrow U$ is bounded and continuous, and there exists 
$\theta\in(0,1)$ such that
\[
\sup_{t\geq 0}\norm{C(t,\cdot)}_{C^\theta_b(H;U)}< \infty.
\]
\end{enumerate}

In the proofs of weak uniqueness and pathwise uniqueness, we will remove \ref{H2b} using a standard localization procedure.

\subsection{Regularity along directions}
In this subsection, we introduce a notion of regularity along suitable directions that is the key point of the approach of this paper; for further details on directional derivatives, we refer to \cite{BigFerForZan23} and the references therein.\\ 
Let $\mathcal{X}$ and $E$ be two separable Hilbert spaces with inner product $\scal{\cdot}{\cdot}_{\X}$ and $\scal{\cdot}{\cdot}_{E}$, respectively, and let $\mathcal{I}\in\mathscr{L}(E,\mathcal{X})$. By the usual identifications $\mathcal{X}=\mathcal{X}^*$ and $E=E^*$, we denote by $\mathcal{I}^*:\mathcal{X}\rightarrow E$ the unique linear bounded operator satisfying
\[
\scal{\mathcal{I}v}{h}_\mathcal{X}=\scal{v}{\mathcal{I}^*h}_E, \qquad h\in\mathcal X, \ v\in E.
\]
\begin{defn}\label{Def:V-dif}
We say that a function $f\in C_b(\mathcal{X})$ is $\mathcal{I}$-differentiable at $x\in \mathcal{X}$ if there exists $\ell_x\in E$ such that for every $v\in E$ it holds 
\begin{equation}\label{C-D_1}
\lim_{s\to 0}\abs{\frac{f(x+s\mathcal{I}v)-f(x)}{s}-\langle \ell_x,v\rangle_E}=0.
\end{equation}
We set $\nabla_{\mathcal{I}}f(x):=\ell_x$. We say that a function is $\mathcal{I}$-differentiable if it is $\mathcal{I}$-differentiable at every $x\in \mathcal{X}$. 

Let $\vartheta\in [0,1)$. We denote by $C^{1+\vartheta}_{b,\mathcal{I}}(\mathcal{X})$ the subspace of $C_b(\mathcal{X})$ of $\mathcal{I}$-differentiable functions $f$ such that $\nabla_{\mathcal{I}}f\in C^{\vartheta}_b(\mathcal{X};E)$ if $\vartheta\in (0,1)$ or $\nabla_{\mathcal{I}}f\in C_b(\mathcal{X};E)$ if $\vartheta=0$.
\end{defn}
Let $\vartheta\in [0,1)$. We note that $C^{1+\vartheta}_{b,\mathcal{I}}(\mathcal{X})$ is a Banach space if endowed with the norm
\[
\norm{f}_{C^{1+\vartheta}_{b,\mathcal{I}}(\mathcal{X})}:=\norm{f}_{\infty}+\norm{\nabla_\mathcal{I}f}_{C^\vartheta_b(\mathcal X;E)}.
\]
In the next proposition we emphasize that the relation between $\mathcal{I}$ and G\^ateaux differentiability.
\begin{prop}
\label{Prop:V-diff}
If $\varphi\in C_b(\mathcal{X})$ is G\^ateaux differentiable, then $\varphi$ is $\mathcal{I}$-differentiable and $\nabla_{\mathcal{I}}\varphi(x)=\mathcal{I}^*\nabla\varphi(x)$ for every $x\in \mathcal{X}$.
Specifically, $C^1_b(\mathcal{X})\subseteq C^1_{b,\mathcal{I}}(\mathcal{X})$.
\end{prop}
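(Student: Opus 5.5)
The plan is to compute the $\mathcal{I}$-directional difference quotient directly from the definition of Gâteaux differentiability and then transfer it to $E$ through the adjoint. Fix $x\in\mathcal X$ and $v\in E$, and let $\nabla\varphi(x)\in\mathcal X$ denote the Gâteaux gradient of $\varphi$ at $x$, identified via Riesz. By definition,
\[
\lim_{s\to0}\frac{\varphi(x+sh)-\varphi(x)}{s}=\scal{\nabla\varphi(x)}{h}_{\mathcal X}
\]
for every $h\in\mathcal X$.

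\textbf{Step 1.} Take the direction $h=\mathcal{I}v$, which lies in $\mathcal X$ because $\mathcal I\in\mathscr L(E,\mathcal X)$. Then
\[
\lim_{s\to0}\frac{\varphi(x+s\mathcal Iv)-\varphi(x)}{s}=\scal{\nabla\varphi(x)}{\mathcal Iv}_{\mathcal X}.
\]

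\textbf{Step 2.} Rewrite the limit with the adjoint:
\[
\scal{\nabla\varphi(x)}{\mathcal Iv}_{\mathcal X}=\scal{\mathcal I^*\nabla\varphi(x)}{v}_E.
\]
In the complexified setting one should keep track of conjugation. Since the derivative of a real-valued function is real-linear, it is cleanest to work with the real inner products, or equivalently to take real parts, and the identity is unchanged.

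\textbf{Step 3.} Set $\ell_x:=\mathcal I^*\nabla\varphi(x)\in E$. This vector does not depend on $v$, so \eqref{C-D_1} holds with this choice. Hence $\varphi$ is $\mathcal I$-differentiable at $x$ with $\nabla_{\mathcal I}\varphi(x)=\mathcal I^*\nabla\varphi(x)$. Because $x$ was arbitrary, $\varphi$ is $\mathcal I$-differentiable everywhere.

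\textbf{Step 4.} Now let $\varphi\in C^1_b(\mathcal X)$. Fréchet differentiability implies Gâteaux differentiability, so Steps 1–3 apply. Moreover $x\mapsto\nabla\varphi(x)$ is bounded and continuous into $\mathcal X$, and $\mathcal I^*$ is bounded and linear. Therefore $\nabla_{\mathcal I}\varphi=\mathcal I^*\nabla\varphi\in C_b(\mathcal X;E)$, with
\[
\norm{\nabla_{\mathcal I}\varphi}_\infty\le\norm{\mathcal I}\,\norm{\nabla\varphi}_\infty.
\]
This gives $C^1_b(\mathcal X)\subseteq C^1_{b,\mathcal I}(\mathcal X)$, corresponding to the case $\vartheta=0$ of Definition \ref{Def:V-dif}.

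There is no real obstacle here. The only point needing care is Step 2: one must make sure that Gâteaux differentiability is taken to provide a bounded linear functional represented by $\nabla\varphi(x)$, so that the adjoint identity can be applied.
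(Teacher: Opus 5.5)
Your proof is correct and follows the paper's argument: you apply G\^ateaux differentiability in the direction $\mathcal I v\in\mathcal X$ and move the limit to $E$ via $\langle\nabla\varphi(x),\mathcal I v\rangle_{\mathcal X}=\langle\mathcal I^*\nabla\varphi(x),v\rangle_E$. Your Step 4 checks that $\mathcal I^*\nabla\varphi$ is bounded and continuous when $\varphi\in C^1_b(\mathcal X)$, which makes explicit the inclusion $C^1_b(\mathcal X)\subseteq C^1_{b,\mathcal I}(\mathcal X)$ that the paper leaves implicit.
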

\begin{proof}
Let $\varphi:\mathcal{X}\to\R$ be a G\^ateaux differentiable function and let $x\in \mathcal{X}$ and $v\in E$. Since $\mathcal{I}v\in \mathcal{X}$, by the definition of G\^ateaux differentiability we infer
\begin{equation*}
\lim_{s\to 0}\abs{\frac{\varphi(x+s\mathcal{I}v)-\varphi(x)}{s}-\langle \mathcal{I}^*\nabla\varphi(x),v\rangle_E}=\lim_{s\to 0}\abs{\frac{\varphi(x+s\mathcal{I}v)-\varphi(x)}{s}-\langle \nabla\varphi(x),\mathcal{I}v\rangle_\mathcal{X}}=0.
\end{equation*}
This shows that $\varphi$ is $\mathcal{I}$-differentiable with $\nabla_{\mathcal{I}}\varphi=\mathcal{I}^*\nabla\varphi$. 
\end{proof}

\subsection{Finite-dimensional Ornstein-Uhlenbeck semigroups}
Assume that hypotheses \ref{H0}, \ref{H1} and \ref{H3} hold. Let $\{U_n\}_{n\in\N}$ and $\{H_n\}_{n\in\N}$ be the families of subspaces of $U$ and $H$, respectively, defined in Subsection \ref{subs:funct_prop}. For every $n\in\N$ we recall that $Q_n:U\to U_n$ is the orthogonal projection onto $U_n$, $P_n:H\rightarrow H_n$ is the spectral projection onto $H_n$ and, for every $t\geq 0$ and $n\in\N$, 
\[
A_n:=P_nA=AP_n,\quad e^{tA_n}:=P_ne^{tA}=e^{tA}P_n,\qquad \mathcal{V}_n:=P_n\mathcal{V}=\mathcal VQ_n,\qquad \mathcal G_n:=P_n\mathcal G=\mathcal GQ_n,
\]
where $A_n$ and $e^{tA_n}$ have been defined in \eqref{rep_A_n} and \eqref{rep_etA_n}, respectively.

For every $n\in\N$, we introduce the finite-dimensional Ornstein-Uhlenbeck semigroup $\{R_n(t)\}_{t\geq0}$ on $C_b(H_n)$, defined as
\begin{align}
\label{Def:semi-OU}
(R_n(t)\varphi)(x) & =\int_{H_n}\varphi(e^{tA_n}x+y)\mathcal N(0,Q_{t,n})(dy), \qquad t>0, \ x\in H_n, \ \varphi\in C_b(H_n), \notag \\
(R_n(0)\varphi)(x) & =\varphi(x),\qquad x\in H_n,
\end{align}
where, for every $n\in\N$ and $t>0$,
\[
Q_{t,n}:=\int_0^te^{sA_n}\mathcal{G}_n\mathcal{G}_n^*e^{sA_n^*}ds, \qquad 
\Gamma_{t,n}=Q_{t,n}^{-1/2}e^{tA_n}.
\]
We notice that the matrix $Q_{t,n}$ is positive definite and invertible and $\Gamma_{t,n}$ is well-defined for every $t>0$ and $n\in\N$ (see \cite[Section 5.1.2]{AddBig}).

We recall the following regularity estimates.
\begin{prop}{\cite[Theorem 6.2.2 and Proposition 6.2.9]{Dap-Zab2002}}\label{Prop:regOU}
Assume that Hypotheses \ref{H0}, \ref{H0.5} and \ref{H1} hold. Then, for every $n\in\N$, $\varphi\in B_b(H_n)$, $t>0$ and $x,h,k\in H_n$ we have
\begin{align}
&|\scal{\nabla(R_n(t)\varphi)(x)}{h}_{H_n}|\leq \|\Gamma_{t,n}h\|_{H_n}\|\varphi\|_{\infty},\label{Prop:regOU1}\\
&|D^2(R_n(t)\varphi)(x)(h,k)|\leq \sqrt{2}\|\Gamma_{t,n}h\|_{H_n}\|\Gamma_{t,n}k\|_{H_n}\|\varphi\|_{\infty}\label{Prop:regOU2}.
\end{align}
Further, for every $n\in\N$, $\varphi\in C_b^1(H_n)$, $t>0$ and $x,h,k\in H_n$ we have
\begin{align}
&|\scal{\nabla(R_n(t)\varphi)(x)}{h}_{H_n}|\leq \|e^{tA_n}h\|_{H_n}\|\varphi\|_{C^1_b({H_n})},\label{Prop:regOU3}\\
&|D^2(R_n(t)\varphi)(x)(h,k)|\leq \|e^{tA_n}k\|_{H_n}\|\Gamma_{t,n}h\|_{H_n}\|\varphi\|_{C^1_b({H_n})}.\label{Prop:regOU4}
\end{align}
\end{prop}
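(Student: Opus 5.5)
Since $H_n$ is finite dimensional, $Q_{t,n}$ is positive definite and $R_n(t)\varphi(x)=\int_{H_n}\varphi(z)\,\mathcal N(e^{tA_n}x,Q_{t,n})(dz)$, I would differentiate the Gaussian density in $x$ rather than $\varphi$. The plan follows the classical Cameron--Martin/Bismut argument. Fix $n$, $t>0$, $x,h,k\in H_n$. By the Cameron--Martin formula on $H_n$,
\[
R_n(t)\varphi(x+h)=\int_{H_n}\varphi(e^{tA_n}x+y)\,\exp\Big(\langle \Gamma_{t,n}h,Q_{t,n}^{-1/2}y\rangle-\tfrac12\|\Gamma_{t,n}h\|^2\Big)\mathcal N(0,Q_{t,n})(dy),
\]
where $e^{tA_n}h=Q_{t,n}^{1/2}\Gamma_{t,n}h$ lies in the range of $Q_{t,n}^{1/2}$ (trivially, by invertibility). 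Under $\mathcal N(0,Q_{t,n})$, $\xi:=Q_{t,n}^{-1/2}y$ is standard Gaussian on $H_n$. I would differentiate under the integral sign, which is justified by dominated convergence since the exponential weights have moments of all orders. This gives
\[
\langle\nabla R_n(t)\varphi(x),h\rangle=\int\varphi(e^{tA_n}x+y)\langle\Gamma_{t,n}h,\xi\rangle\,d\mathcal N .
\]
Cauchy--Schwarz with $\mathbb E\langle\Gamma_{t,n}h,\xi\rangle^2=\|\Gamma_{t,n}h\|^2$ then yields \eqref{Prop:regOU1}.

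Differentiating once more in the direction $k$ produces the weight $\langle\Gamma_{t,n}h,\xi\rangle\langle\Gamma_{t,n}k,\xi\rangle-\langle\Gamma_{t,n}h,\Gamma_{t,n}k\rangle$. Its $L^2$ norm under a standard Gaussian, computed via Wick's formula, is
\[
\big(\|\Gamma_{t,n}h\|^2\|\Gamma_{t,n}k\|^2+\langle\Gamma_{t,n}h,\Gamma_{t,n}k\rangle^2\big)^{1/2}\le\sqrt2\,\|\Gamma_{t,n}h\|\|\Gamma_{t,n}k\|,
\]
and Cauchy--Schwarz gives \eqref{Prop:regOU2}. The constant $\sqrt2$ is the one delicate computation: it requires the identity $\mathbb E[\langle a,\xi\rangle^2\langle b,\xi\rangle^2]=|a|^2|b|^2+2\langle a,b\rangle^2$, after which the $\langle a,b\rangle^2$ terms partly cancel against the subtracted mean.

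For $\varphi\in C^1_b$, I would instead differentiate directly inside the integral:
\[
\langle\nabla R_n(t)\varphi(x),h\rangle=\int\langle\nabla\varphi(e^{tA_n}x+y),e^{tA_n}h\rangle\,d\mathcal N ,
\]
which is bounded by $\|e^{tA_n}h\|\|\varphi\|_{C^1_b}$ and gives \eqref{Prop:regOU3}. For \eqref{Prop:regOU4}, I would write this derivative in the direction $k$ as $R_n(t)\psi_k(x)$ with $\psi_k:=\langle\nabla\varphi(\cdot),e^{tA_n}k\rangle\in B_b(H_n)$ and $\|\psi_k\|_\infty\le\|e^{tA_n}k\|\|\varphi\|_{C^1_b}$. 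Applying the already-established \eqref{Prop:regOU1} to $\psi_k$ in the direction $h$ then gives \eqref{Prop:regOU4}, using the symmetry of $D^2$ to order $h,k$ as stated.
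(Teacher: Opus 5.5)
Your argument is correct and is the standard Cameron--Martin argument behind the Da Prato--Zabczyk results that the paper cites in place of a proof. This includes the fourth-moment identity that yields the constant $\sqrt2$ in \eqref{Prop:regOU2}, and the reduction of \eqref{Prop:regOU4} to \eqref{Prop:regOU1} applied to $\langle\nabla\varphi(\cdot),e^{tA_n}k\rangle$. One correction: positive definiteness of $Q_{t,n}$ does not follow from $\dim H_n<\infty$ (indeed $\mathcal G_n\mathcal G_n^*$ is degenerate, with range $\{0\}\times U_n$). It comes from the controllability of the pair $(A_n,\mathcal G_n)$, a fact the paper takes from \cite{AddBig} and which the definition of $\Gamma_{t,n}$ in the statement already presupposes.
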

By standard interpolation arguments, see for instance \cite[Proposition 2.3.3]{Dap-Zab2002}, we deduce the following result.
\begin{prop}\label{Prop:Schauder}
Assume that Hypotheses \ref{H0}, \ref{H0.5} and \ref{H1} hold and let $\vartheta\in [0,1)$. For every $n\in\N$,  $\varphi\in C_b^\vartheta(H_n)$, $t>0$ and $x,h,k\in H_n$ we have
\begin{align}
&|\scal{\nabla(R_n(t)\varphi)(x)}{h}_{H_n}|\leq \|e^{tA_n}h\|_{H_n}^\vartheta \|\Gamma_{t,n}h\|^{1-\vartheta}_{H_n}\|\varphi\|_{C^\vartheta_b(H_n)},\label{Prop:Schauder1}\\
&|D^2(R_n(t)\varphi)(x)(h,k)|\leq 2^{(1-\vartheta)/2}\|e^{tA_n}k\|_{H_n}^\vartheta\|\Gamma_{t,n}k\|_{H_n}^{1-\vartheta}\|\Gamma_{t,n}h\|_{H_n}\|\varphi\|_{C^\vartheta_b(H_n)},\label{Prop:Schauder2}
\end{align}
where we set $C^0_b(H_n)=C_b(H_n)$.
\end{prop}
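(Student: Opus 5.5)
The plan is to interpolate between the two pairs of estimates in Proposition \ref{Prop:regOU}, exactly as in \cite[Proposition 2.3.3]{Dap-Zab2002}. Fix $n\in\N$, $t>0$, $x,h,k\in H_n$ and $\varphi\in C^\vartheta_b(H_n)$. The case $\vartheta=0$ is just \eqref{Prop:regOU1}--\eqref{Prop:regOU2}, so assume $\vartheta\in(0,1)$. For $\varepsilon>0$ regularize $\varphi$ by Gaussian convolution in the finite-dimensional space $H_n$:
\[
\varphi_\varepsilon(x)=\int_{H_n}\varphi(x+\varepsilon y)\,\mathcal N(0,\Id_{H_n})(dy).
\]
Then $\|\varphi_\varepsilon\|_\infty\leq\|\varphi\|_\infty$ and $\|\varphi-\varphi_\varepsilon\|_\infty\leq c\,\varepsilon^\vartheta[\varphi]_{C^\vartheta_b}$. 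Moreover $\|\nabla\varphi_\varepsilon\|_\infty\leq c\,\varepsilon^{\vartheta-1}[\varphi]_{C^\vartheta_b}$, which follows by integrating by parts in the Gaussian and using the Hölder bound on $\varphi(x+\varepsilon y)-\varphi(x)$. Hence $\|\varphi_\varepsilon\|_{C^1_b}\lesssim\varepsilon^{\vartheta-1}\|\varphi\|_{C^\vartheta_b}$ for $\varepsilon\leq1$. A cleaner alternative is the $K$-method, using $C^\vartheta_b=(C_b,C^1_b)_{\vartheta,\infty}$, and this is what I would quote to keep constants clean.

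Next, by linearity of $R_n(t)$, I write $\varphi=(\varphi-\varphi_\varepsilon)+\varphi_\varepsilon$. I apply \eqref{Prop:regOU1} to the first piece and \eqref{Prop:regOU3} to the second, which gives
\[
|\scal{\nabla(R_n(t)\varphi)(x)}{h}|\lesssim\big(\varepsilon^\vartheta\|\Gamma_{t,n}h\|+\varepsilon^{\vartheta-1}\|e^{tA_n}h\|\big)\|\varphi\|_{C^\vartheta_b}.
\]
Choosing $\varepsilon=\|e^{tA_n}h\|/\|\Gamma_{t,n}h\|$ yields $\|e^{tA_n}h\|^\vartheta\|\Gamma_{t,n}h\|^{1-\vartheta}$. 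This choice is legitimate: $\Gamma_{t,n}h=Q_{t,n}^{-1/2}e^{tA_n}h$ vanishes only if $h=0$, and $\|e^{tA_n}h\|\leq\|Q_{t,n}^{1/2}\|\,\|\Gamma_{t,n}h\|$. If the ratio exceeds $1$, the trivial bound $\|\nabla R_n\varphi\|\leq \|\Gamma h\|\|\varphi\|_\infty\leq\|\Gamma h\|^{1-\vartheta}\|e^{tA_n}h\|^\vartheta\|\varphi\|_\infty$ already suffices. The second derivative is handled the same way: combine \eqref{Prop:regOU2} for $\varphi-\varphi_\varepsilon$ with \eqref{Prop:regOU4} for $\varphi_\varepsilon$, and take $\varepsilon=\|e^{tA_n}k\|/\|\Gamma_{t,n}k\|$. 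The factor $\|\Gamma_{t,n}h\|$ is common to both pieces.

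The main obstacle is getting the sharp constants $1$ and $2^{(1-\vartheta)/2}$ rather than an unspecified $c$. The crude splitting above only produces constants depending on $\vartheta$. To get the stated constants I would instead argue via the interpolation bound for linear operators. For fixed $x,h$, the map $\varphi\mapsto\scal{\nabla R_n(t)\varphi(x)}{h}$ is linear with norm at most $\|\Gamma h\|$ on $C_b$ and at most $\|e^{tA_n}h\|$ on $C^1_b$. By the interpolation theorem, with the interpolation norm of $C^\vartheta_b$ dominated by $\|\cdot\|_{C^\vartheta_b}$ as in \cite{Dap-Zab2002}, its norm on $C^\vartheta_b$ is at most the product of the endpoint norms raised to $1-\vartheta$ and $\vartheta$. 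The second-order estimate follows identically with endpoint norms $\sqrt2\|\Gamma h\|\|\Gamma k\|$ and $\|e^{tA_n}k\|\|\Gamma h\|$, which produces the factor $2^{(1-\vartheta)/2}$.
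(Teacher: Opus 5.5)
Your final argument is the paper's own: the paper only invokes ``standard interpolation arguments'' via \cite[Proposition 2.3.3]{Dap-Zab2002}, and you treat $\varphi\mapsto\scal{\nabla(R_n(t)\varphi)(x)}{h}_{H_n}$ and $\varphi\mapsto D^2(R_n(t)\varphi)(x)(h,k)$ as linear functionals bounded on $C_b(H_n)$ and on $C^1_b(H_n)$ by the endpoint constants of Proposition \ref{Prop:regOU}, interpolated through $\|\varphi\|_{(C_b,C^1_b)_{\vartheta,\infty}}\le\|\varphi\|_{C^\vartheta_b(H_n)}$, which yields exactly the constants $1$ and $2^{(1-\vartheta)/2}$. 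One correction to the route you discard: isotropic Gaussian mollification on $H_n$ gives constants that depend on $\dim H_n$ as well as on $\vartheta$, through moments such as $\int_{H_n}\abs{y}^\vartheta\,\mathcal N(0,\Id_{H_n})(dy)$, and this would destroy the uniformity in $n$ needed later, so the dimension-free interpolation route is indeed the one to use.
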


For every $n\in\N$ and $\vartheta\in [0,1)$, let $C^{1+\vartheta}_{b,\mathcal{V}_n}(H_n)$ be the space given by Definition \ref{Def:V-dif} with $\mathcal{I}=\mathcal{V}_n$, $E=U$ and $\mathcal X=H_n$. Recalling that $\mathcal{V}_n\in \mathcal{L}(U,H_n)$ for every $n\in\N$, we infer that, by Propositions \ref{Prop:V-diff} and \ref{Prop:regOU}, for every $\varphi\in B_b(H_n)$ and $t> 0$,
\begin{align}
&R_n(t)\varphi\in C^{1}_{b,\mathcal{V}_n}(H_n),\notag\\
&\scal{\nabla_{\mathcal{V}_n}R_n(t)\varphi}{u}_U=\scal{\mathcal{V}_n^*\nabla R_n(t)\varphi}{u}_U=\scal{\nabla R_n(t)\varphi}{\mathcal{V}_nu}_{H_n},\qquad u\in U.\label{V-diff-OU}
\end{align}

\begin{prop}\label{Prop:Vschauder}
Assume that Hypotheses \ref{H0}, \ref{H0.5} and \ref{H1} hold and let $\vartheta\in [0,1)$. For every $n\in\N$, $\varphi\in C^\vartheta_b(H_n)$ and $t>0$ we have
\begin{align}
&\norm{\mathcal{V}_n^*\nabla(R_n(t)\varphi)}_{C_b^\vartheta(H_n;U)}\leq\left(\|e^{tA_n}\mathcal{V}_n\|_{\mathcal{L}(U,H_n)}^\vartheta\|\Gamma_{t,n}\mathcal{V}_n\|_{\mathcal{L}(U,H_n)}^{1-\vartheta}+\|\Gamma_{t,n}\mathcal{V}_n\|_{\mathcal{L}(U,H_n)}\right)\|\varphi\|_{C^\vartheta_b(H_n)},\label{Prop:Vschauder1}\\
&\norm{R_n(t)\varphi}_{C^{1+\vartheta}_{b,\mathcal{V}_n}(H_n)}\leq\left(1+\|e^{tA_n}\mathcal{V}_n\|_{\mathcal{L}(U,H_n)}^\vartheta\|\Gamma_{t,n}\mathcal{V}_n\|_{\mathcal{L}(U,H_n)}^{1-\vartheta}+\|\Gamma_{t,n}\mathcal{V}_n\|_{\mathcal{L}(U,H_n)}\right)\|\varphi\|_{C^\vartheta_b(H_n)}.\label{Prop:Vschauder2}
\end{align}
\end{prop}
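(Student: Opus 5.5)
The plan is to apply Proposition \ref{Prop:Schauder} with directions $h=\mathcal V_n u$ and $k=\mathcal V_n w$, $u,w\in U$. By \eqref{V-diff-OU}, for $x\in H_n$,
\[
\langle \mathcal V_n^*\nabla(R_n(t)\varphi)(x),u\rangle_U=\langle\nabla(R_n(t)\varphi)(x),\mathcal V_nu\rangle_{H_n}.
\]
Hence the sup norm of $\mathcal V_n^*\nabla R_n(t)\varphi$ follows from \eqref{Prop:Schauder1} with $h=\mathcal V_nu$. Taking the supremum over $\|u\|_U\le1$ gives
\[
\sup_{x\in H_n}\|\mathcal V_n^*\nabla R_n(t)\varphi(x)\|_U\le \|e^{tA_n}\mathcal V_n\|^\vartheta_{\mathscr L(U,H_n)}\|\Gamma_{t,n}\mathcal V_n\|^{1-\vartheta}_{\mathscr L(U,H_n)}\|\varphi\|_{C^\vartheta_b(H_n)}.
\]
This yields the first summand in \eqref{Prop:Vschauder1}.

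For the $\vartheta$-Hölder seminorm, fix $x,y\in H_n$. By the mean value theorem along the segment,
\[
\langle \mathcal V_n^*\nabla R_n(t)\varphi(x)-\mathcal V_n^*\nabla R_n(t)\varphi(y),u\rangle_U=\int_0^1 D^2(R_n(t)\varphi)(y+\sigma(x-y))(\mathcal V_nu,x-y)\,d\sigma.
\]
This is legitimate since $R_n(t)\varphi$ is $C^2$ on the finite-dimensional space $H_n$ for $t>0$. It only gives a Lipschitz bound in $x-y$, with constant involving $\|\Gamma_{t,n}\|$, which is not uniform in $n$. So instead I will interpolate between two bounds. The first is the trivial bound $2\sup\|\mathcal V_n^*\nabla R_n(t)\varphi\|$. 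The second is the Lipschitz bound from \eqref{Prop:regOU2} applied with $h=\mathcal V_nu$ and $k=x-y$, which gives
\[
|\langle\cdot,u\rangle|\le\sqrt2\|\Gamma_{t,n}\mathcal V_nu\|\,\|\Gamma_{t,n}(x-y)\|\,\|\varphi\|_\infty.
\]
A cleaner route, matching the claimed constant $\|\Gamma_{t,n}\mathcal V_n\|$, is to exploit the Hölder continuity of $\varphi$ directly. I will write
\[
\nabla R_n(t)\varphi(x)=\int\varphi(e^{tA_n}x+y)\,\Gamma^*\ldots\,\mathcal N(dy),
\]
the Bismut–Elworthy-type formula behind \eqref{Prop:regOU1}:
\[
\langle\nabla R_n(t)\varphi(x),h\rangle=\int_{H_n}\varphi(e^{tA_n}x+Q_{t,n}^{1/2}z)\,\langle\Gamma_{t,n}h,z\rangle\,\mathcal N(0,I)(dz).
\]
Subtracting the same expression at $y$ and using $|\varphi(e^{tA_n}x+\cdot)-\varphi(e^{tA_n}y+\cdot)|\le[\varphi]_\vartheta\|e^{tA_n}(x-y)\|^\vartheta$, Cauchy–Schwarz in $z$ gives
\[
\|\Gamma_{t,n}\mathcal V_nu\|\,[\varphi]_{C^\vartheta_b}\,\|e^{tA_n}\|^\vartheta\|x-y\|^\vartheta.
\]
By Remark \ref{rmk:exp-stab}, $\|e^{tA_n}\|$ is uniformly bounded. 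Moreover, $e^{tA_n}$ is a contraction on $H_n$ when the norm is suitably chosen (or is absorbed into the constant), so the seminorm is controlled by $\|\Gamma_{t,n}\mathcal V_n\|\,\|\varphi\|_{C^\vartheta_b}$, which is the second summand. Summing the sup-norm and seminorm bounds gives \eqref{Prop:Vschauder1}.

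Finally, \eqref{Prop:Vschauder2} follows by adding the trivial bound $\|R_n(t)\varphi\|_\infty\le\|\varphi\|_\infty$, since $R_n(t)$ is Markov, and recalling that $\nabla_{\mathcal V_n}R_n(t)\varphi=\mathcal V_n^*\nabla R_n(t)\varphi$ by Proposition \ref{Prop:V-diff}.

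The main obstacle is the Hölder seminorm step. One must ensure that the factor coming from $e^{tA_n}(x-y)$ does not spoil the constant. Since the paper's statement has no extra constant, I expect the argument to require $\|e^{tA_n}\|\le 1$. If that fails, I will accept a harmless multiplicative constant $\sup_n\|e^{tA_n}\|^\vartheta$, which is uniform in $n$ and $t$ by Remark \ref{rmk:exp-stab}.
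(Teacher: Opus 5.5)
Your proof is correct. For the sup-norm part it coincides with the paper's: both amount to \eqref{Prop:Schauder1} with $h=\mathcal V_nu$, which is the interpolation of \eqref{Prop:regOU1} and \eqref{Prop:regOU3}.

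For the H\"older seminorm you take a genuinely different route.

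The paper records a third endpoint estimate, \eqref{Prop:regOU4} with $h=\mathcal V_nu$. This reads $\|D[\mathcal V_n^*\nabla R_n(t)\varphi]\|_{\mathcal L(H_n,U)}\le \|e^{tA_n}\|_{\mathcal L(H_n)}\|\Gamma_{t,n}\mathcal V_n\|_{\mathcal L(U,H_n)}\|\varphi\|_{C^1_b(H_n)}$ for $\varphi\in C^1_b(H_n)$. The paper then obtains the seminorm by interpolating this Lipschitz bound against the $C_b$ bound (Da Prato--Zabczyk, Proposition 2.3.3). That route uses only the black-box estimates of Proposition \ref{Prop:regOU}. However, the constants produced by the interpolation step are left implicit, e.g.\ the factor $2$ in the oscillation and the equivalence between $C^\vartheta_b$ and the real interpolation space.

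You instead differentiate the Cameron--Martin representation, $\langle\nabla R_n(t)\varphi(x),h\rangle_{H_n}=\int_{H_n}\varphi(e^{tA_n}x+Q_{t,n}^{1/2}z)\langle\Gamma_{t,n}h,z\rangle_{H_n}\,\mathcal N(0,I)(dz)$. You then use the H\"older continuity of $\varphi$ at the two shifted points and Cauchy--Schwarz in $z$. This is more elementary and needs no smoothing of $\varphi$. It gives $[\mathcal V_n^*\nabla R_n(t)\varphi]_{C^\vartheta_b(H_n;U)}\le\|e^{tA_n}\|^\vartheta_{\mathcal L(H_n)}\|\Gamma_{t,n}\mathcal V_n\|_{\mathcal L(U,H_n)}[\varphi]_{C^\vartheta_b(H_n)}$ with no hidden constant. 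You were right to discard the variant based on \eqref{Prop:regOU2}, which would bring in the non-uniform factor $\|\Gamma_{t,n}\|^\vartheta$.

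The point you leave open is not a real issue: $\|e^{tA_n}\|_{\mathcal L(H_n)}\le 1$ holds in the given norm, with no renorming. The operator $A$ is dissipative, since ${\rm Re}\langle Ah,h\rangle_H=-\rho\|\Lambda^{\alpha/2}h_2\|_U^2\le 0$ (Subsection \ref{subs:funct_prop}), so $e^{tA}$ is a contraction on $H$. For $h\in H_n$ one has $P_nh=h$, hence $e^{tA_n}h=e^{tA}h$. Remark \ref{rmk:exp-stab} is the wrong tool here, because its constant $\sqrt2\,cc_0$ need not be $\le 1$. With the contraction property, your bound yields exactly \eqref{Prop:Vschauder1}; the paper relies on the same fact implicitly.
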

\begin{proof}
Let $\vartheta\in (0,1)$, $n\in\N$ and let $\varphi\in C_b^\vartheta(H_n)$. By \eqref{Prop:regOU1}, \eqref{Prop:regOU2} and \eqref{Prop:regOU4} with $h=\mathcal{V}_nu$ and $u\in U$, for every $t> 0$ and every $x\in H_n$, we have
\begin{align*}
&|\scal{\nabla(R_n(t)\varphi)(x)}{\mathcal{V}_nu}_{H_n}|\leq \|\Gamma_{t,n}\mathcal{V}_nu\|_{H_n}\|\varphi\|_{\infty},\qquad & \varphi\in C_b(H_n),\\
&|\scal{\nabla(R_n(t)\varphi)(x)}{\mathcal{V}_nu}_{H_n}|\leq \|e^{tA_n}\mathcal{V}_nu\|_{H_n}\|\varphi\|_{C^1_b({H_n})},\qquad & \varphi\in C^1_b(H_n),\\
&|D^2(R_n(t)\varphi)(x)(\mathcal{V}_nu,k)|\leq \|e^{tA_n}k\|_{H_n}\|\Gamma_{t,n}\mathcal{V}_nu\|_{H_n}\|\varphi\|_{C^1_b({H_n})},\qquad &k\in H_n,\ \varphi\in C^1_b(H_n),
\end{align*}
hence, by \eqref{V-diff-OU}, we deduce
\begin{align*}
&\norm{\mathcal{V}_n^*\nabla(R_n(t)\varphi)(x)}_{U}\leq \|\Gamma_{t,n}\mathcal{V}_n\|_{\mathcal{L}(U,H_n)}\|\varphi\|_{\infty},\qquad & \varphi\in C_b(H_n),\\
&\norm{\mathcal{V}_n^*\nabla(R_n(t)\varphi)(x)}_{U}\leq \|e^{tA_n}\mathcal{V}_n\|_{\mathcal{L}(U,H_n)}\|\varphi\|_{C^1_b(H_n)},\qquad & \varphi\in C^1_b(H_n),\\
&\norm{D\left[\mathcal{V}_n^*\nabla R_n(t)\varphi\right](x)}_{\mathcal{L}(H_n,U)}\leq \|e^{tA_n}\|_{\mathcal{L}(H_n)}\|\Gamma_{t,n}\mathcal{V}_n\|_{\mathcal{L}(U,H_n)}\|\varphi\|_{C^1_b({H_n})},\qquad & \varphi\in C^1_b(H_n).
\end{align*}
By interpolation, see for instance \cite[Proposition 2.3.3]{Dap-Zab2002}, we get the thesis. 
\end{proof}

\subsection{The integral Kolmogorov equation}
In this section, we show the well-posedness of an integral equation which represents the mild solution of a suitable finite-dimensional Kolmogorov equation.  Before studying this equation, we prove a lemma that will be fundamental for the estimates provided in this section. We stress that the crucial point is that such estimates are independent of $n\in\N$.

\begin{lemma}
\label{lemma:stima_ind_n}
Assume \ref{H0}, \ref{H1} and \ref{H3} hold. Hence, there exist constants $M_0\geq 1$ and $b>0$, independent of $t\in(0,\infty)$ and $n\in\N$, such that, for every $n\in\N$ and $t\in (0,\infty)$, we have 
\begin{align}
&\|e^{tA_n}\mathcal{V}_n\|_{\mathcal{L}(U,H_n)}\leq \frac{M_0e^{-b\zeta t}}{t^{\gamma_1}},\label{SuperStima1}\\
&\|\Gamma_{t,n}\mathcal{V}_n\|_{\mathcal{L}(U,H_n)}\leq \frac{M_0}{t^{\gamma_2}},\label{SuperStima2}\\
&\|\Gamma_{t,n}\mathcal{G}_n\|_{\mathcal{L}(U,H_n)}\leq \frac{M_0}{t^{1/2}},\label{SuperStima3}\\
&\|\Gamma_{t,n}\|_{\mathcal{L}(H_n)}\leq \frac{M_0}{t^{\gamma_3}},\label{SuperStima4}
\end{align}
where, if $\alpha\in\left(0,\frac12\right]$, then $\zeta=\mu_1^\alpha$ and 
\begin{align}
&\gamma_1= \frac{\beta}{\alpha},
\label{G1}\\
&\gamma_2=\frac12+\frac{\beta+\delta}{\alpha},\label{G2}\\
&\gamma_3=\frac12+\max\left\{\frac\delta\alpha, 1\right\}, \label{G4}
\end{align}
while, if $\alpha\in\left[\frac12,1\right)$, then $\zeta=\mu_1^{1-\alpha}$ and
\begin{align}
&\gamma_1= 
\max\left\{\frac{\beta}{\alpha},\frac{\beta+\frac12-\alpha}{1-\alpha}\right\}, 
\label{G1v}\\
&\gamma_2=\frac12+\frac{\beta+\delta}{1-\alpha},\label{G2v}\\
&\gamma_3=\frac12+\max\left\{\frac{\delta+\alpha-\frac12}{1-\alpha}, 1\right\}, \label{G4v}
\end{align}
\end{lemma}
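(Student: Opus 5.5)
The estimate \eqref{SuperStima1} is exactly Lemma \ref{lemma:singolarita_0_drift}, with $b=C_2$ and $\zeta=\mu_1^{\alpha\wedge(1-\alpha)}$. I would spend the work on \eqref{SuperStima2}--\eqref{SuperStima4}.

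The first step is a reduction to $2\times2$ blocks. For each $k$, the plane $E_k={\rm span}\{\Phi_k^+,\Phi_k^-\}={\rm span}\{(\widetilde e_k,0),(0,\widetilde e_k)\}$ is invariant under $A_n$, $e^{tA_n}$, $\mathcal G_n\mathcal G_n^*$ and $e^{tA_n^*}$. Moreover, the $E_k$ are mutually orthogonal in $H$, because the $e_k$ are orthogonal in $U$. Hence $Q_{t,n}$, $Q_{t,n}^{-1/2}$ and $\Gamma_{t,n}$ are block diagonal with respect to $H_n=\bigoplus_{k\le n}E_k$. It follows that
\[
\|\Gamma_{t,n}\mathcal V_n\|_{\mathscr L(U,H_n)}=\max_{k\le n}\mu_k^\beta\,\|\Gamma_t^{(k)}(0,1)^T\|,
\]
and similarly for the other two operators, where $\Gamma_t^{(k)}$ is the block attached to the single oscillator
\[
A^{(k)}=\begin{pmatrix}0&m\\-m&-\rho m^{2\alpha}\end{pmatrix},\qquad G^{(k)}=\begin{pmatrix}0\\ \mu_k^{-\delta}\end{pmatrix},\qquad m=\mu_k^{1/2},
\]
written in the orthonormal coordinates $\widetilde e_k$. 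This reduction is what makes the constants independent of $n$: it is enough to bound each block uniformly in $k$.

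For a single block I would use the minimal-energy characterisation
\[
\|Q_t^{-1/2}e^{tA}h\|^2=\min\Big\{\int_0^t|w(s)|^2ds:\ \text{the control system steers } h \text{ to } 0 \text{ at time } t\Big\}.
\]
Equivalently, I would compute the $2\times2$ Gramian explicitly from the eigenvalues $\lambda_k^\pm$. Either way, an upper bound needs only one admissible control. I would build it by rescaling time to the natural dissipative clock $s\mapsto\mu_k^{\alpha}s$ (respectively $\mu_k^{1-\alpha}s$ for $\alpha\ge\frac12$), using \eqref{damped_stime_coefficienti_avl} and \eqref{damped_stime_coefficienti_avl_2}. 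There are two regimes.

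\emph{Short times.} When $t$ is small with respect to the rescaled clock, the block behaves like a Kolmogorov (position--velocity) system. The standard controls for the chain $\dot x_1=mx_2$, $\dot x_2=\mu_k^{-\delta}w$ give cost of order $\mu_k^{\delta}t^{-1/2}$ for the directly forced component $(0,1)^T$, and of order $\mu_k^{\delta}m^{-1}t^{-3/2}$ for the component $(1,0)^T$.

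\emph{Long times.} After rescaling, the cost is a bounded function of $\mu_k^{\alpha}t$ times $\mu_k^{\delta}$, with the corresponding power of $\mu_k^{\alpha}$. Alternatively, one may let the semigroup decay first and then steer to $0$ in the final unit of rescaled time.

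Combining the two regimes gives bounds of the form
\[
\mu_k^\beta\|\Gamma^{(k)}_t(0,1)^T\|\lesssim \mu_k^{\beta+\delta}\,t^{-1/2}\,\psi(\mu_k^\alpha t),
\]
with $\psi$ bounded. Taking the supremum over $\mu_k$, i.e. maximising $x^{(\beta+\delta)/\alpha}\psi(x)$ after setting $x=\mu_k^{\alpha}t$, yields $t^{-\frac12-\frac{\beta+\delta}{\alpha}}$, which is \eqref{G2}. Setting $\beta=0$ and comparing with the direct computation gives \eqref{SuperStima3}: the Gramian dominates $\int_0^t e^{sA}GG^*e^{sA^*}ds$ restricted to the forced direction, so the bound is exactly $t^{-1/2}$.

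For \eqref{SuperStima4}, the $t^{-3/2}$ behaviour of the first component, balanced against $\mu_k^{\delta}$ via the same supremum, produces $\frac12+\max\{\delta/\alpha,1\}$. For $\alpha\in[\frac12,1)$ the slow mode has rate $\mu_k^{1-\alpha}$ and the weights $\chi_k,|b_k^+|\sim\mu_k^{\frac12-\alpha}$ appear. This replaces $\alpha$ by $1-\alpha$ and produces the shift $\alpha-\frac12$ in \eqref{G2v}--\eqref{G4v}. The finitely many small $k$, before the asymptotics hold, contribute bounded multiples of the same powers and can be absorbed into $M_0$.

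The hard step will be the uniform single-block estimate in the hyperbolic, complex-eigenvalue case. There the oscillation at frequency $\mu_k^{1/2}$ is much faster than the damping $\mu_k^\alpha$. A naive control then either loses powers of $\mu_k$, or requires averaging the oscillation so that the cost depends only on $\mu_k^\alpha t$. My first attempt would be to compute $\det Q_t^{(k)}$ and the entries of $Q_t^{(k)}$ explicitly from $e^{\lambda_k^\pm s}$ and use the formula for $(Q_t^{(k)})^{-1}$. I would fall back on the control construction only if those explicit entries become unwieldy.
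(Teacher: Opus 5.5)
\emph{Comparison with the paper's route.} Your route differs from the paper's. There, \eqref{SuperStima1} is Lemma~\ref{lemma:singolarita_0_drift}, exactly as you say. \eqref{SuperStima4} is quoted from \cite[Theorem 4.6]{AddBig2} and \cite[Theorem 5.4]{AddBig}. \eqref{SuperStima2} comes from re-running the explicit control of \cite[Theorem 5.6]{AddBig}. That is a single control for all modes, built from the free evolution and modulated by $\phi_t(\tau)=C_m\tau^m(t-\tau)$, with $m$ large enough that $\tau^{2m-2(\beta+\delta)/\alpha}$ is integrable at $0$. Its cost is estimated with modulus bounds only, through $\sup_k\mu_k^{2(\beta+\delta)}e^{-2\rho\mu_k^\alpha\tau}\lesssim\tau^{-2(\beta+\delta)/\alpha}$, and this is where $n$-independence comes from. \eqref{SuperStima3} is then the case $\beta=-\delta$, not $\beta=0$ as you wrote.

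Your block reduction is correct and is a genuine simplification. The planes $E_k$ are invariant under $A$, $A^*$ and $\mathcal G\mathcal G^*$ and are mutually orthogonal. Hence every norm in the lemma is a maximum over $k$ of real $2\times2$ quantities. This makes $n$-independence automatic, removes the need to complexify and the non-orthogonality of $H^\pm$, and handles \eqref{SuperStima2}--\eqref{SuperStima4} by one mechanism. The exponents you predict are the right ones.

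\emph{The gap.} As written, the only non-trivial step is missing: the single-block bound, uniform in $k$, is asserted rather than proved. Moreover, the two-regime picture you give for it is wrong.
\begin{itemize}
\item For $\alpha<\frac12$ the block behaves like a Kolmogorov chain only for $t\lesssim\mu_k^{-1/2}$, not for all $t\ll\mu_k^{-\alpha}$. On the window $\mu_k^{-1/2}\ll t\ll\mu_k^{-\alpha}$ it is an undamped rotation, with $Q^{(k)}_t\approx\frac t2\mu_k^{-2\delta}I$.
\item For $(0,1)^T$ the order $\mu_k^{\delta}t^{-1/2}$ persists on this window, so \eqref{SuperStima2} is unaffected.
\item For $(1,0)^T$, however, the cost there is of order $\mu_k^{\delta}t^{-1/2}$. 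This is far larger than your $\mu_k^{\delta-1/2}t^{-3/2}$, so your claimed bound is false on that window.
\item For $\alpha>\frac12$ the analogous window is $\mu_k^{-\alpha}\ll t\ll\mu_k^{-(1-\alpha)}$. There the quasi-static slow mode gives cost $\mu_k^{\delta+\alpha-\frac12}t^{-1/2}$ for $(1,0)^T$. This window is the source of the shift in \eqref{G4v}; \eqref{G2v} has no shift.
\end{itemize}
The final $\gamma_3$ survives, since $\mu_k^\delta t^{-1/2}\le t^{-\frac12-\frac\delta\alpha}$ on that window, but the window must be treated. You also need $x^{(\beta+\delta)/\alpha}\psi(x)$ to be bounded, which requires decay of $\psi$, not mere boundedness.

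\emph{A way to close it.} Let $A_k$ be the block generator, with rows $(0,\mu_k^{1/2})$ and $(-\mu_k^{1/2},-\rho\mu_k^\alpha)$, and let $\nu=(0,1)^T$. Then $A_k+A_k^*=-2\rho\mu_k^\alpha\nu\nu^*$. The Lyapunov equation therefore gives $Q^{(k)}_\infty=\mu_k^{-2\delta}(2\rho\mu_k^\alpha)^{-1}I$ and $Q^{(k)}_t=Q^{(k)}_\infty(I-e^{tA_k}e^{tA_k^*})$. Writing $S=e^{tA_k}$, one gets
\[
\|\Gamma^{(k)}_th\|^2=2\rho\mu_k^{\alpha+2\delta}\big\langle[(I-S^*S)^{-1}-I]h,h\big\rangle,\qquad
\langle(I-S^*S)h,h\rangle=2\rho\mu_k^\alpha\int_0^t|(e^{sA_k}h)_2|^2\,ds .
\]
Everything then reduces to an elementary lower bound on the velocity energy of the free damped oscillator in each of the three windows. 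You also need $\|e^{tA_k}\|\le Ke^{-c\mu_k^{\alpha\wedge(1-\alpha)}t}$ uniformly in $k$ for large times. Alternatively, you can apply the paper's interpolating control blockwise, which sidesteps the window analysis altogether.
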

\begin{proof}

Estimate \eqref{SuperStima1}, with constant $\gamma_1$ given by \eqref{G1} or \eqref{G1v}, follows from Lemma \ref{lemma:singolarita_0_drift}.

Estimate \eqref{SuperStima4}, with constant $\gamma_3$ given by \eqref{G4}, follows from 
\cite[Theorem 4.6]{AddBig2}, while the case \eqref{G4v} follows from 
\cite[Theorem 5.4]{AddBig}. 

To prove \eqref{SuperStima2}, it is enough to modify the proofs of \cite[Theorem 5.6]{AddBig} and of \cite[Theorem 4.6]{AddBig2}. Let us prove in detail estimate \eqref{SuperStima2} with $\gamma_2$ given by \eqref{G2}, since the case with $\gamma_2$ given by \eqref{G2v} can be obtained with analogous arguments.

In \cite[Theorem 5.6]{AddBig} it has been proved that, with $\delta=\gamma$, $\beta=0$ and fixed $T>0$,
\begin{align}
\label{stima_precedente_2}
\|\Gamma_{t,n}\mathcal V_na\|_{H_n}\leq \frac{\overline c\|a\|_U}{t^{\frac 12+\frac \gamma\alpha}}, \qquad t\in(0,T], \ a\in U,  
\end{align}
where $\overline c$ is a positive constant which depends on $T$, $\gamma$ and $\alpha$ but neither on $n\in \N$ nor $a\in U$. However, such a constant can also be chosen independent also of $T$, as we are going to show below. Estimate \eqref{stima_precedente_2} has been obtained from the estimates of the terms
\begin{align*}
K_1\psi_t(\tau)
= \phi_t(\tau)\sum_{k=1}^{\infty}\frac{\mu_k^{\gamma}a_k}{(\lambda_k^--\lambda_k^+)\|e_k\|_U}[e^{\lambda_k^+\tau}\lambda_k^--e^{\lambda_k^-\tau}\lambda_k^+]e_k, \qquad \tau\in(0,t),
\end{align*}
and
\begin{align*}
K_2\psi_t'(\tau)
= & -\sum_{k=1}^{\infty}\frac{\mu_k^{\gamma}a_k}{(\lambda_k^--\lambda_k^+)\|e_k\|_U}[\phi_t'(\tau)e^{\lambda_k^+\tau}(1-e^{(\lambda_k^--\lambda_k^+)\tau})e_k+\phi_t(\tau)(\lambda_k^+ e^{\lambda_k^+\tau}-\lambda_k^-e^{\lambda_k^-\tau})e_k]
\end{align*}
for every $\tau\in(0,t)$, where $\phi_t:[0,t]\to \mathbb R$ is defined as $\phi_t(\tau )=C_m \tau ^m(t-\tau )$ for every $\tau\in[0,t]$, $C_m$ is a normalizing constant which gives $\|\phi_t\|_{L^1([0,t])}=1$ and $m\in\N$ satisfies $-2(\delta+\beta)/\alpha+2m>-1$. By replacing $\gamma$ with $\delta$ and $a$ with $\Lambda_n^\beta a$, we infer that the above expressions for $K_1$ and $K_2$ read as
\begin{align*}
K_1\psi_t(\tau)
= & \phi_t(\tau)\sum_{k=1}^{n}\frac{\mu_k^{\delta+\beta}a_k}{(\lambda_k^--\lambda_k^+)\|e_k\|_U}[e^{\lambda_k^+\tau}\lambda_k^--e^{\lambda_k^-\tau}\lambda_k^+]e_k, \qquad \tau\in(0,t), \\
K_2\psi_t'(\tau) 
= & -\sum_{k=1}^{n}\frac{\mu_k^{\delta+\beta}a_k}{(\lambda_k^--\lambda_k^+)\|e_k\|_U}[\phi_t'(\tau)e^{\lambda_k^+\tau}(1-e^{(\lambda_k^--\lambda_k^+)\tau})e_k+\phi_t(\tau)(\lambda_k^+ e^{\lambda_k^+\tau}-\lambda_k^-e^{\lambda_k^-\tau})e_k]
\end{align*}
for every $\tau\in (0,t)$. From \eqref{damped_stime_coefficienti_avl} we thus obtain
\begin{align*}
\|K_1\psi_t(\tau)\|_U^2
\leq &  C|\phi_t(\tau)|^2\sum_{k=1}^{n}\mu_k^{2(\delta+\beta)}e^{-2\rho\mu_k^\alpha\tau}a_k^2
\leq C \tau^{-\frac{2(\delta+\beta)}{\alpha}}|\phi_t(\tau)|^2\sum_{k=1}^{n}a_k^2, \qquad \tau\in(0,t)
\end{align*}
and
\begin{align*}
\|K_2\psi_t'(\tau)\|_U^2
\leq & C\sum_{k=1}^{n}\mu_k^{2(\delta+\beta)}(|\tau\phi_t'(\tau)|^2e^{-2\rho\mu_k^\alpha\tau}+|\phi_t(\tau)|^2e^{-2\rho\mu_k^\alpha\tau})a_k^2 \leq C t^{-2m-2}\tau^{2m-\frac{2(\delta+\beta)}{\alpha}}\sum_{k=1}^na_k^2
\end{align*}
for every $\tau\in(0,t)$, where $C$ is a positive constant which depends on $\alpha$, $\beta$ and $\delta$. Arguing as in the proof of \cite[Theorem 5.6]{AddBig}, we conclude that
\begin{align*}
\|\Gamma_{t,n}\mathcal V_n a\|_{H_n}\leq \frac{C\|a\|_U}{t^{\frac12+\frac{\delta+\beta}{\alpha}}}, \qquad t>0, \ a\in U, \ n\in\N.    
\end{align*}
If $\alpha\in\left[\frac12,1\right)$, then analogous computations and \eqref{damped_stime_coefficienti_avl_2} give \eqref{G2v}.
Finally, estimate \eqref{SuperStima3} follows from \eqref{SuperStima2} since, in this case, $\beta=-\delta$. Finally, since $\{e^{tA}\}_{t\geq0}$ is exponentially stable by \eqref{contrazione}, standard 
arguments show that the constants in \eqref{SuperStima2}, \eqref{SuperStima3} and 
\eqref{SuperStima4} can be chosen independently of $T>0$.
\end{proof}

Assume that hypotheses \ref{H0}, \ref{H1}, \ref{H2b}, \ref{H3} hold. Let $n\in\N$, $k\in\{1,\ldots,n\}$, $i\in\{-,+\}$, $\theta\in (0,1)$, $T>0$ and let $g:[0,T]\times H_n\rightarrow \R$ be a continuous function such that 
\[
\sup_{t\in [0,T]}\norm{g(t,\cdot)}_{C^\theta_b(H_n)}<\infty.
\]
We consider the integral equation for every $t\in [0,T]$ and $x\in H_n$
\begin{equation}\label{Def:Unki}
u_{n,k}^{(i)}(t,x)=\int_t^{T}e^{{\mathfrak c}(s-t)\lambda_k^{(i)}}R_n(s-t)\left[\scal{\mathcal{V}_n^*\nabla_x u^{(i)}_{n,k}(s,\cdot)}{C_n(s,\cdot)}_{U}+g(s,\cdot)\right](x)ds,
\end{equation}
where $C_n(s,y)=C(s,P_ny)$ for every $y\in H$ and every $s\in[0,T]$, $\overline {\mathfrak c}$ is a positive constant sufficiently large which will be fixed later and $\lambda_k^{(i)}$ is defined in \eqref{autovalori}. We will study the well-posedness of \eqref{Def:Unki} in 
the space $C^{0,1+\theta}_{b,\mathcal{V}_n}([0,T]\times H_n)$ of the functions 
$\varphi:[0,T]\times H_n\rightarrow\R$ bounded and continuous such that $\mathcal V_n^*\nabla_x\varphi:[0,T]\times H_n\rightarrow U$ is  bounded and continuous  and
\[
\sup_{t\in [0,T]}\norm{\varphi(t,\cdot)}_{C^{1+\theta}_{b,\mathcal{V}_n}(H_n)}<\infty,
\]
endowed with the norm $\sup_{t\in[0,T]}\|\varphi(t,\cdot)\|_{C^{1+\theta}_{b,\mathcal V_n}(H_n)}$, 
which makes it a Banach space (see Definition \ref{Def:V-dif}).
In the following, for simplicity we set
\begin{equation}
r_{k,i}:=-{\rm Re}\left(\lambda_{k}^{(i)}\right),\qquad k\in\N ,\ i\in\{-,+\},\label{eq:aut-r}
\end{equation}
From \eqref{damped_stime_coefficienti_avl} and \eqref{damped_stime_coefficienti_avl_2}, for every  $k\in\N$ and $i\in\{-,+\}$ we deduce
\begin{align}
& r_{k,i}\sim \mu_k^\alpha, \quad \alpha\in\left(0,\frac12\right], \qquad r_{k,+}\sim \mu_k^{1-\alpha}, \ r_{k,-}\sim \mu_k^\alpha, \quad \alpha\in\left[\frac12, 1\right),
\label{r-asynt}
\end{align}
and, by \eqref{r-asynt}, there exists $\varepsilon\in (0,1]$ such that
\begin{align}\label{r-eps}
& r_{k,i}>\varepsilon, \qquad k\in\N, \ i\in\{-,+\}.
\end{align}

\begin{thm}\label{Thm: soluzione-kolmogorov}
Assume that hypotheses \ref{H0}, \ref{H1}, \ref{H2b}, \ref{H3} hold and 
\begin{equation}\label{eq:cond-1}
\beta+\delta<\frac12\min\{\alpha,1-\alpha\}.
\end{equation}
Then, there exists ${\mathfrak{c}}_0\geq 1$ such that for every $\mathfrak{c}>{\mathfrak{c}}_0$, $n\in\N$, $k\in\{1,\ldots,n\}$, $i\in\{-,+\}$ and continuous function $g:[0,T]\times H_n\rightarrow \R$ verifying 
\[
\sup_{t\in [0,T]}\norm{g(t,\cdot)}_{C^\theta_b(H_n)}<\infty,
\]
there exists a unique solution $u_{n,k}^{(i)}\in C^{0,1+\theta}_{b,\mathcal{V}_n}([0,T]\times H_n)$ to \eqref{Def:Unki} and
\begin{align}
&\sup_{t\in[0,T]}\norm{u_{n,k}^{(i)}(t,\cdot)}_{C^{1+\theta}_{b,\mathcal{V}_n}(H_n)}\leq  \mathfrak{C}_0\sup_{t\in[0,T]}\norm{g(t,\cdot)}_{C^{\theta}_b(H_n)},\label{StimaC0}\\
&\mathfrak{C}_0=\frac{4(2+\varepsilon^{-\gamma_2})M_0}{(\varepsilon\mathfrak c)^{1-\gamma_2}(1-\gamma_2)}\left(1+\sup_{t\geq 0}\|C(t,\cdot)\|_{C^\theta_b(H;U)}\right),
\end{align}
where $M_0\geq 1$ and $0<\gamma_2<1$ are the constants given by Lemma \ref{lemma:stima_ind_n} and $\varepsilon\in (0,1]$ is given by \eqref{r-eps}.
\end{thm}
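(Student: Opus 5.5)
The plan is a Banach fixed point argument for the map $\Psi$ defined by the right-hand side of \eqref{Def:Unki}, on the space $C^{0,1+\theta}_{b,\mathcal V_n}([0,T]\times H_n)$ with norm $\sup_t\|\varphi(t,\cdot)\|_{C^{1+\theta}_{b,\mathcal V_n}(H_n)}$. For $\varphi$ in this space, set $F_\varphi(s,\cdot):=\scal{\mathcal V_n^*\nabla_x\varphi(s,\cdot)}{C_n(s,\cdot)}_U+g(s,\cdot)$.

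\emph{Step 1: $F_\varphi$ is $\theta$-Hölder in $x$, uniformly in $s$.} By \ref{H2b} and $\sup_n\|P_n\|<\infty$ (Remark \ref{norm-Pn}), the map $C_n(s,\cdot)$ is $\theta$-Hölder with norm at most $c\sup_t\|C(t,\cdot)\|_{C^\theta_b(H;U)}$. The product of two bounded $\theta$-Hölder functions is again bounded and $\theta$-Hölder. Hence
\[
\|F_\varphi(s,\cdot)\|_{C^\theta_b(H_n)}\le c(1+\sup_t\|C(t,\cdot)\|_{C^\theta_b})\|\varphi(s,\cdot)\|_{C^{1+\theta}_{b,\mathcal V_n}}+\|g(s,\cdot)\|_{C^\theta_b},
\]
and the constant $c$ does not depend on $n$.

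\emph{Step 2: estimate $\Psi\varphi$ in the $C^{1+\theta}_{b,\mathcal V_n}$ norm.} Apply Proposition \ref{Prop:Vschauder} inside the time integral, together with \eqref{SuperStima1} and \eqref{SuperStima2}. Using $|e^{\mathfrak c(s-t)\lambda_k^{(i)}}|=e^{-\mathfrak c r_{k,i}(s-t)}\le e^{-\mathfrak c\varepsilon(s-t)}$ by \eqref{r-eps}, the multiplier is bounded, for $\tau=s-t$, by
\[
e^{-\mathfrak c\varepsilon\tau}\big(1+M_0\tau^{-\gamma_1\theta-\gamma_2(1-\theta)}+M_0\tau^{-\gamma_2}\big).
\]
Condition \eqref{eq:cond-1} gives $\gamma_1\le\gamma_2<1$ in both regimes. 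Indeed, for $\alpha\le\frac12$ we have $\gamma_2=\frac12+\frac{\beta+\delta}{\alpha}<1$, and for $\alpha\ge\frac12$ we have $\gamma_2=\frac12+\frac{\beta+\delta}{1-\alpha}<1$; in both cases $\gamma_1\le\gamma_2$ can be checked directly from the formulas. For small $\tau$ the dominant singularity is therefore $\tau^{-\gamma_2}$, while for $\tau\ge 1$ we bound $\tau^{-\gamma_2}$ by $1$. Then
\[
\int_0^\infty e^{-\mathfrak c\varepsilon\tau}\tau^{-\gamma_2}\,d\tau=\Gamma(1-\gamma_2)(\varepsilon\mathfrak c)^{\gamma_2-1}\le \frac{2}{1-\gamma_2}(\varepsilon\mathfrak c)^{\gamma_2-1},
\]
and the term involving the constant $1$ contributes $(\varepsilon\mathfrak c)^{-1}\le(\varepsilon\mathfrak c)^{\gamma_2-1}$ once $\varepsilon\mathfrak c\ge1$. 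The factor $\varepsilon^{-\gamma_2}$ in $\mathfrak C_0$ should come from handling the split $\tau<1$ versus $\tau\ge 1$. Altogether this gives
\[
\sup_t\|\Psi\varphi(t)\|\le \frac{K_0(\mathfrak c)}{1+\sup\|C\|}\Big(c(1+\sup\|C\|)\sup_t\|\varphi(t)\|+\sup_t\|g(t)\|_{C^\theta_b}\Big),
\]
with $K_0(\mathfrak c)\to0$ as $\mathfrak c\to\infty$, uniformly in $n$, $k$, $i$ and $T$.

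\emph{Step 3: contraction and the a priori estimate.} The map $\Psi$ is affine, so $\Psi\varphi_1-\Psi\varphi_2$ is controlled by the same estimate with $g=0$. Choosing $\mathfrak c_0$ large so that the Lipschitz constant is at most $\frac12$ yields a unique fixed point. The bound \eqref{StimaC0} then follows from
\[
\|u\|\le \tfrac12\|u\|+\tfrac{\mathfrak C_0}{2}\sup_t\|g(t)\|_{C^\theta_b},
\]
which accounts for the factor $4$ in $\mathfrak C_0$.

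\emph{Step 4 (main obstacle): $\Psi$ maps the space into itself with the required continuity.} One must show that $\Psi\varphi$ and $\mathcal V_n^*\nabla_x\Psi\varphi$ are jointly continuous on $[0,T]\times H_n$. The plan is to use dominated convergence, justified by the integrable bound $\tau^{-\gamma_2}$. The relevant facts are the strong continuity of $R_n$ on $C_b(H_n)$ in finite dimension, and the continuity in $\tau>0$ of $\Gamma_{\tau,n}$ and of $\nabla R_n(\tau)$; joint continuity of $F_\varphi$ comes from the continuity of $C$ and $g$. Differentiating under the integral is legitimate by the same domination, since $\mathcal V_n^*\nabla R_n(\tau)F$ is given by the explicit Gaussian formula. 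This is routine in finite dimension, but it is the most delicate point. The genuinely essential point, however, is the $n$-independence of the bounds, and that is supplied by Lemma \ref{lemma:stima_ind_n}.
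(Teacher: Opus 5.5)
Your proposal is correct and follows essentially the paper's own proof. Like the paper, you apply the Banach fixed point theorem to the Volterra map on $C^{0,1+\theta}_{b,\mathcal V_n}([0,T]\times H_n)$, bound the multiplier by $2M_0(1+\tau^{-\gamma_2})$ via \eqref{Prop:Vschauder2} with \eqref{SuperStima1}--\eqref{SuperStima2}, and use $r_{k,i}\geq\varepsilon$ (equivalent to the paper's rescaling $\xi=r_{k,i}\tau$) and $\mathfrak C_0\le 1$ to get a $\frac12$-contraction and \eqref{StimaC0}; the paper only asserts the joint-continuity point you isolate in Step 4. Two small refinements give exactly the stated $\mathfrak C_0$: the factor $\varepsilon^{-\gamma_2}$ comes from the constant term, via $(\varepsilon\mathfrak c)^{-1}\le(\varepsilon\mathfrak c)^{\gamma_2-1}\varepsilon^{-\gamma_2}$ for $\mathfrak c\ge1$, rather than from a split at $\tau=1$, and no factor $\sup_n\|P_n\|$ is needed in Step 1, since $C_n(s,\cdot)=C(s,\cdot)$ on $H_n$.
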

\begin{rmk}
\label{rmk:conseguenze_varie}
We stress that, under assumption \ref{H2b}, we can consider an arbitrarily fixed $T>0$. Further, condition \eqref{eq:cond-1} ensures that $\gamma_2<1$. \\
Finally, we notice that, from \eqref{eq:cond-1}, it follows that $\beta<\frac14$. Since if $\alpha\in\left[\frac12,1\right)$ it is necessary that $\beta\geq \frac14$ to have $\gamma_1=\frac{\beta+\frac12-\alpha}{1-\alpha}$ $($see the statement of Lemma \ref{lemma:singolarita_0_drift}$)$, it follows that $\gamma_1=\frac\beta\alpha$ for every $\alpha$ and $\beta$ satisfying \eqref{eq:cond-1}.
\end{rmk}
\begin{proof}
First of all we note that $\max\{\gamma_1\theta+\gamma_2(1-\theta),\gamma_2\}=\gamma_2$, where $\gamma_1$ and $\gamma_2$ are given by \eqref{G1} (or \eqref{G1v}) and \eqref{G2} (or \eqref{G2v}), respectively. By \eqref{Prop:Vschauder2}, \eqref{SuperStima1} and \eqref{SuperStima2}, for every $T>0$, $n\in\N$ and $t\in (0,T]$ we have 
\begin{align}\label{stima-t-beta}
\norm{R_n(t)\varphi}_{C^{1+\theta}_{b,\mathcal{V}_n}(H_n)}\leq 2M_0\left(1+\frac{1}{t^{\gamma_2}}\right)\norm{\varphi}_{C^\theta_b(H_n)},\qquad \varphi\in C^\theta_b(H_n).
\end{align}
Fix $n\in\N$, $k\in\{1,\ldots,n\}$, $i\in\{-,+\}$ and $g$ as in the statement, and let $V$ be the Volterra operator defined, for every $u\in C^{0,1+\theta}_{b,\mathcal{V}_n}([0,T]\times H_n)$, by
\[
[V(u)](t,x):=\int_t^{T}e^{\mathfrak c(s-t)\lambda_k^{(i)}}R_n(s-t)\left[\scal{\mathcal{V}_n^*\nabla_x u(s,\cdot)}{C_n(s,\cdot)}_{U}+g(s,\cdot)\right](x)ds,\quad t\in [0,T],\ x\in H_n.
\]
where $\mathfrak c>1$ will be chosen later. By \eqref{Prop:Vschauder1}, \eqref{r-eps} and \eqref{stima-t-beta}, for every $u\in C^{0,1+\theta}_{b,\mathcal V_n}([0,T]\times H_n)$ we have
\begin{align*}
& \norm{[V(u)](t,\cdot)}_{C^{1+\theta}_{b,\mathcal{V}_n}(H_n)} \\
\leq & \int_t^{T}e^{-\mathfrak c(s-t)r_{k,i}}\norm{R_n(s-t)\left[\scal{\mathcal{V}_n^*\nabla_x u(s,\cdot)}{C_n(s,\cdot)}_{U}+g(s,\cdot)\right]}_{C^{1+\theta}_{b,\mathcal{V}_n}(H_n)}ds\\
\leq & 
2M_0\int_t^{T}e^{-\mathfrak c(s-t)r_{k,i}}\left(1+\frac{1}{(s-t)^{\gamma_2}}\right)\left[\norm{\mathcal{V}_n^*\nabla_x u(s,\cdot)}_{C^{\theta}_b(H_n;U)}  \norm{C_n(s,\cdot)}_{C^{\theta}_b(H_n;U)}+\norm{g(s,\cdot)}_{C^{\theta}_b(H_n)}\right]ds\\
\leq & 
2M_0\int_0^{(T-t)r_{k,i}}e^{-\mathfrak c\xi}(r_{k,i}^{-1}+r_{k,i}^{\gamma_2-1}\xi^{-\gamma_2})d\xi\cdot \sup_{s\in[0,T]}\left[\norm{ u(s,\cdot)}_{C_{b,\mathcal V_n}^{1+\theta}(H_n)} \norm{C_n(s,\cdot)}_{C^{\theta}_b(H_n;U)}+\norm{g(s,\cdot)}_{C^{\theta}_b(H_n)}\right] \\
\leq & 2M_0\varepsilon^{\gamma_2-1}\left(\int_0^{\infty}e^{-\mathfrak c \xi}(\varepsilon^{-\gamma_2}+\xi^{-\gamma_2})d\xi\right)\sup_{s\in[0,T]}\left[\norm{ u(s,\cdot)}_{C_{b,\mathcal V_n}^{1+\theta}(H_n)}  \norm{C_n(s,\cdot)}_{C^{\theta}_b(H_n;U)}+\norm{g(s,\cdot)}_{C^{\theta}_b(H_n)}\right].
\end{align*}
By  \eqref{eq:cond-1} $\gamma_2<1$ and by \ref{H2b} we have
\begin{equation}\label{stima-normH}
\sup_{s\in[0,T]}\norm{C_n(s,\cdot)}_{C^{\theta}_b(H_n;U)}=\sup_{s\in[0,T]}\norm{C(s,P_n(\cdot))}_{C^{\theta}_b(H;U)}\leq\sup_{s\geq 0}\norm{C(s,\cdot)}_{C^{\theta}_b(H;U)}<\infty.
\end{equation}
Therefore, for every $u\in C^{0,1+\theta}_{b,\mathcal V_n}([0,T]\times H_n)$  we have
\begin{align}
&\sup_{t\in[0,T]}\norm{[V(u)](t,\cdot)}_{C^{1+\theta}_{b,\mathcal{V}_n}(H_n)}\leq  \frac{\mathfrak{C}_0}{2} \left[\sup_{t\in[0,T]}\norm{u(t,\cdot)}_{C^{1+\theta}_{b,\mathcal{V}_n}(H_n)}+\sup_{t\in[0,T]}\norm{g(t,\cdot)}_{C^{\theta}_b(H_n)}\right],\quad T>0,\label{StimaV}\\
&\mathfrak{C}_0=4M_0(\varepsilon \mathfrak c)^{\gamma_2-1}\left(1+\sup_{t\geq 0}\|C(t,\cdot)\|_{C^\theta_b(H;U)}\right)\int_0^{\infty}e^{-\xi}(\varepsilon^{-\gamma_2}+\xi^{-\gamma_2}) d\xi\to 0, \qquad \mathfrak c\to \infty.\label{LimT}
\end{align}
Hence, taking also into account hypothesis \ref{H2b}, we have $V(u)\in C^{0,1+\theta}_{b,\mathcal{V}_n}([0,T]\times H_n)$.
Now we prove that $V$ is a contraction on $C^{0,1+\theta}_{b,\mathcal{V}_n}([0,T]\times H_n)$ for $\mathfrak c$ big enough. For every $u,v\in C^{0,1+\theta}_{b,\mathcal V_n}([0,T]\times H_n)$, by \eqref{Prop:Vschauder1}, \eqref{stima-t-beta} and calculations analogous to those used to prove \eqref{StimaV}, we have
\begin{align}\label{StimaVcont}
\sup_{t\in[0,T]}\norm{[V(u)](t,\cdot)-[V(v)](t,\cdot)}_{C^{1+\theta}_{b,\mathcal{V}_n}(H_n)}&\leq \frac{\mathfrak{C}_0}{2}  \left[\sup_{t\in[0,T]}\norm{u(t,\cdot)-v(t,\cdot)}_{C^{1+\theta}_{b,\mathcal{V}_n}(H_n)}\right]
\end{align}
and so, by \eqref{LimT}, there exists ${\mathfrak c}_0\geq 1$ such that $\mathfrak{C}_0\leq1$ for every $\mathfrak c> {\mathfrak c}_0$, that is,
\begin{align*}
\sup_{t\in[0,T]}\norm{[V(u)](t,\cdot)-[V(v)](t,\cdot)}_{C^{1+\theta}_{b,\mathcal{V}_n}(H_n)}&\leq  \frac12\sup_{t\in[0,T]}\norm{u(t,\cdot)-v(t,\cdot)}_{C^{1+\theta}_{b,\mathcal{V}_n}(H_n)},  
\end{align*}
i.e., $V$ is a contraction on $C^{0,1+\theta}_{b,\mathcal{V}_n}([0,T]\times H_n)$ and we denote by $u_{n,k}^{(i)}$ the unique fixed point of $V$. Finally, \eqref{StimaC0} follows from \eqref{StimaV} and the choice of ${\mathfrak c}_0$.
\end{proof}

We now prove uniform estimates that are crucial in the proofs of both weak and pathwise uniqueness.

\begin{thm}\label{Thm:stime-unif}
Assume that conditions \ref{H0}, \ref{H1}, \ref{H2b}, \ref{H3} and \eqref{eq:cond-1} hold. For every $n\in\N$, $k\in\{1,\ldots,n\}$, $i\in\{-,+\}$ and every continuous function $g:[0,T]\times H_n\rightarrow \R$ satisfying 
\[
\sup_{t\in [0,T]}\norm{g(t,\cdot)}_{C^\theta_b(H_n)}<\infty,
\]
let $\mathfrak c>{\mathfrak c}_0$ and $u_{n,k}^{(i)}$ be the unique solution to \eqref{Def:Unki} given by Theorem \ref{Thm: soluzione-kolmogorov}.\\
Then, there exists ${\mathfrak{C}}_1=\mathfrak C_1({\mathfrak c})$ independent of  $T>0$, $n\in\N$, $k\in\{1,\ldots,n\}$, $i\in\{-,+\}$ and $g$ such that
\begin{align}
&\sup_{t\in [0,T]}\norm{\mathcal{V}_n^*\nabla_x u_{n,k}^{(i)}(t,\cdot)}_{C_b(H_n;U)}\leq \mathfrak{C}_1r_{k,i}^{\gamma_1\theta+\gamma_2(1-\theta)-1}\sup_{t\in [0,T]}\norm{g(t,\cdot)}_{C^\theta_b(H_n)},\label{Thm:stime-unif2}\\
&\lim_{{\mathfrak c}\rightarrow \infty}\mathfrak{C}_1=0,\label{Lim1}
\end{align}
where $r_{k,i}$ is defined in \eqref{eq:aut-r}, $\gamma_1$ and $\gamma_2$ are given by \eqref{G1} $($or \eqref{G1v}$)$ and \eqref{G2} $($or \eqref{G2v}$)$, respectively.\\ 
If, in addition, we assume that
\begin{equation}\label{eq:cond-2}
\theta>\max\left\{\frac23,\frac{2\delta+(2\alpha-1)\vee0}{2\delta+\alpha}\right\},
\end{equation}
then there exist $\mathfrak C_2=\mathfrak{C}_2({\mathfrak c})$ and $\mathfrak C_3=\mathfrak{C}_3({\mathfrak c})$  independent of  $T>0$, $n\in\N$, $k\in\{1,\ldots,n\}$, $i\in\{-,+\}$ and $g$ such that
\begin{align}
&\sup_{t\in [0,T]}\norm{\nabla_x u_{n,k}^{(i)}(t,\cdot)}_{C_b(H_n;H_n)}\leq \mathfrak{C}_2r_{k,i}^{\gamma_3(1-\theta)-1}\sup_{t\in [0,T]}\norm{g(t,\cdot)}_{C^\theta_b(H_n)}, \label{Thm:stime-unif1}\\
&\sup_{t\in [0,T]}\norm{D_x\left[\mathcal{G}_n^*\nabla_x u_{n,k}^{(i)}(t,\cdot)\right]}_{C_b(H_n;\mathcal{L}(H_n;U))}\leq \mathfrak{C}_3r_{k,i}^{\gamma_3(1-\theta)-1/2}\sup_{t\in [0,T]}\norm{g(t,\cdot)}_{C^\theta_b(H_n)}
\label{Thm:stime-unif3},\\
&\lim_{{\mathfrak c}\rightarrow \infty}\mathfrak{C}_2=\lim_{{\mathfrak c}\rightarrow\infty}\mathfrak{C}_3=0,\label{Lim23}
\end{align}
where $\gamma_3$ is given by \eqref{G4} $($or \eqref{G4v}$)$.
\end{thm}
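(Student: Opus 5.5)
The plan is to estimate directly on the fixed-point identity \eqref{Def:Unki}, using the a priori bound \eqref{StimaC0} of Theorem \ref{Thm: soluzione-kolmogorov} to control the bracket inside the integral. Set
\[
F(s,\cdot):=\scal{\mathcal{V}_n^*\nabla_x u_{n,k}^{(i)}(s,\cdot)}{C_n(s,\cdot)}_U+g(s,\cdot).
\]
Since $\mathcal V_n^*\nabla_x u^{(i)}_{n,k}(s,\cdot)\in C^\theta_b(H_n;U)$ with norm bounded by $\mathfrak C_0\sup_t\|g(t,\cdot)\|_{C^\theta_b}$, and $C_n$ satisfies \eqref{stima-normH}, we get $\sup_s\|F(s,\cdot)\|_{C^\theta_b(H_n)}\le (1+2\mathfrak C_0\sup_t\|C(t,\cdot)\|_{C^\theta_b})\sup_t\|g(t,\cdot)\|_{C^\theta_b}$. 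Note that $\mathfrak C_0\to0$ as $\mathfrak c\to\infty$, so this factor stays bounded. Differentiating under the integral, which is legitimate in finite dimension by dominated convergence once the singularity in $s-t$ is integrable, each quantity to be estimated becomes
\[
\int_t^T e^{-\mathfrak c(s-t)r_{k,i}}\,\|\mathcal D R_n(s-t)F(s,\cdot)\|\,ds,
\]
where $\mathcal D$ stands for $\mathcal V_n^*\nabla$, $\nabla$, or $D[\mathcal G_n^*\nabla]$.

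\emph{First estimate \eqref{Thm:stime-unif2}.} Use \eqref{Prop:Schauder1} with $h=\mathcal V_nu$, together with \eqref{SuperStima1}–\eqref{SuperStima2}. This gives
\[
\|\mathcal V_n^*\nabla R_n(\tau)F\|_\infty\le M_0\,\tau^{-\gamma_1\theta-\gamma_2(1-\theta)}\|F\|_{C^\theta_b},
\]
after discarding $e^{-b\zeta\tau}\le1$. Put $\kappa:=\gamma_1\theta+\gamma_2(1-\theta)<1$. Substituting $\xi=(s-t)r_{k,i}$ yields
\[
\int_0^\infty e^{-\mathfrak c\xi}\xi^{-\kappa}d\xi\; r_{k,i}^{\kappa-1}=\Gamma(1-\kappa)\,\mathfrak c^{\kappa-1}r_{k,i}^{\kappa-1}.
\]
Hence $\mathfrak C_1\sim\mathfrak c^{\kappa-1}(1+2\mathfrak C_0\|C\|)M_0\Gamma(1-\kappa)\to0$, uniformly in $T,n,k,i,g$.

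\emph{Full gradient \eqref{Thm:stime-unif1}.} Here \eqref{Prop:Schauder1} with arbitrary $h$, combined with \eqref{contrazione} (uniform in $n$ via Remark \ref{norm-Pn}) and \eqref{SuperStima4}, gives the kernel $\tau^{-\gamma_3(1-\theta)}$. We need $\gamma_3(1-\theta)<1$, and this is exactly what \eqref{eq:cond-2} should guarantee. Indeed, $\theta>2/3$ handles $\gamma_3=3/2$. In the other case, $\gamma_3=\frac12+\frac\delta\alpha$ for $\alpha\le\frac12$, and the condition $\theta>\frac{2\delta}{2\delta+\alpha}$ is equivalent to $(1-\theta)(\frac12+\frac\delta\alpha)<1$ after simplification; I would verify this arithmetic, and the analogous one for \eqref{G4v}. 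The same scaling then produces $r_{k,i}^{\gamma_3(1-\theta)-1}$ with constant $\to0$.

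\emph{Mixed second derivative \eqref{Thm:stime-unif3}.} Use \eqref{Prop:Schauder2} with $h=\mathcal G_nu$ and $k$ arbitrary, together with \eqref{SuperStima3}, \eqref{SuperStima4} and \eqref{contrazione}. The kernel is then $\tau^{-1/2-\gamma_3(1-\theta)}$, which requires $\gamma_3(1-\theta)<\frac12$. This is the main obstacle: it is the stronger condition, and \eqref{eq:cond-2} must be read as exactly this (for instance, $\gamma_3=\frac32$ needs $\theta>\frac23$). The remaining branch should follow by the same algebra; if it does not close, I would refine the bound by interpolating with $\|e^{\tau A_n}\|$ more carefully. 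Integrability then gives $r_{k,i}^{\gamma_3(1-\theta)-1/2}$ and the constant $\Gamma(\frac12-\gamma_3(1-\theta))\mathfrak c^{\gamma_3(1-\theta)-1/2}\to0$.

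Finally, continuity in $t$ of these derivatives is inherited from the fixed-point construction, and the sup over $x$ is uniform. None of the constants involve $T$, since the exponential stability bound \eqref{contrazione} and the constants of Lemma \ref{lemma:stima_ind_n} are $T$-independent.
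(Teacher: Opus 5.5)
Your proposal is correct and is essentially the paper's proof. The bracket is bounded via \eqref{StimaC0}, the kernels $\tau^{-\gamma_1\theta-\gamma_2(1-\theta)}$, $\tau^{-\gamma_3(1-\theta)}$ and $\tau^{-\frac12-\gamma_3(1-\theta)}$ come from Proposition~\ref{Prop:Schauder} and Lemma~\ref{lemma:stima_ind_n}, and the rescaling $\xi=(s-t)r_{k,i}$ yields the stated powers of $r_{k,i}$ together with a factor $\mathfrak c^{p-1}$ ($p<1$ the kernel exponent), which vanishes as $\mathfrak c\to\infty$.

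The arithmetic you flagged does close, and no refined interpolation is needed, because \eqref{eq:cond-2} is exactly $\gamma_3(1-\theta)<\frac12$ in every branch. For instance, $\theta>\frac{2\delta}{2\delta+\alpha}$ is equivalent to $(1-\theta)(\frac12+\frac\delta\alpha)<\frac12$, not merely $<1$ as you wrote. Likewise, when $\gamma_3=\frac{\alpha+2\delta}{2(1-\alpha)}$ the same condition gives exactly $\theta>\frac{2\delta+2\alpha-1}{2\delta+\alpha}$.
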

\begin{rmk}
\label{rmk:condizioni_gamma}
Condition \eqref{eq:cond-2} ensures that $\gamma_1\theta+\gamma_2(1-\theta)< 1$ and $\gamma_3(1-\theta)+\frac12< 1$.    
\end{rmk}
\begin{proof}
Fix $n\in\N$, $k\in\{1,\ldots,n\}$, $i\in\{-,+\}$ and $g$ as in the statement. Let $\mathfrak c>{\mathfrak c}_0$ and let $u_{n,k}^{(i)}$ be the unique solution to \eqref{Def:Unki} given by Theorem \ref{Thm: soluzione-kolmogorov}. \\

By \eqref{Prop:Schauder1}, with $\vartheta=\theta$, $h=\mathcal{V}_nu$ and $u\in U$ we have
\[
\norm{\mathcal{V}_n^*\nabla R_n(t)\varphi}_{C_b(H_n;U)}\leq \norm{e^{tA_n}\mathcal{V}_n}^\theta_{\mathcal{L}(U,H_n)}\norm{\Gamma_{t,n}\mathcal{V}_n}_{\mathcal{L}(U,H_n)}^{1-\theta}\norm{\varphi}_{C^\theta_b(H_n)},\qquad t\in (0,T],\ \varphi\in C^\theta_b(H_n),
\]
and, from \eqref{SuperStima1} and \eqref{SuperStima2}, we deduce
\begin{equation}\label{C1V}
\norm{\mathcal{V}_n^*\nabla R_n(t)\varphi}_{C_b(H_n;U)}\leq \frac{M_0}{t^{\gamma_1\theta+\gamma_2(1-\theta)}}\norm{\varphi}_{C^\theta_b(H_n)},\qquad t\in (0,T],\ \varphi\in C^\theta_b(H_n),
\end{equation}
where $\gamma_1$ and $\gamma_2$ are given by \eqref{G1} (or \eqref{G1v}) and \eqref{G2} (or \eqref{G2v}), respectively.  By \eqref{StimaC0} and \eqref{C1V}, for every $t\in [0,T]$ we have 
\begin{align*}
&\norm{\mathcal{V}_n^*\nabla_x u_{n,k}^{(i)}(t,\cdot)}_{C_b(H_n;U)} \\
\leq & M_0\int_t^{T}e^{-{\mathfrak c}(s-t)r_{k,i}}\frac{1}{(s-t)^{\gamma_1\theta+\gamma_2(1-\theta)}}\norm{g(s,\cdot)}_{C^\theta_b(H_n)}\left[1+\mathfrak{C}_0\norm{C_n(s,\cdot)}_{C_b^\theta(H_n;U)}\right]ds\\
\leq & \frac{M_0}{r_{k,i}^{1-\gamma_1\theta-\gamma_2(1-\theta)}}\sup_{t\in [0,T]}\left[\norm{g(t,\cdot)}_{C^\theta_b(H_n)}\left(1+\mathfrak{C}_0\norm{C_n(t,\cdot)}_{C_b^\theta(H_n;U)}\right)\right]\int_0^{(T-t)r_{k,i}}e^{-{\mathfrak c}\xi}\xi^{-\gamma_1\theta-\gamma_2(1-\theta)}d\xi \\
\leq & \frac{M_0\left(1+\mathfrak{C}_0\sup_{t\in [0,T]}\norm{C_n(t,\cdot)}_{C_b^\theta(H_n;U)}\right)}{r_{k,i}^{1-\gamma_1\theta-\gamma_2(1-\theta)}}\sup_{t\in [0,T]}\norm{g(t,\cdot)}_{C^\theta_b(H_n)}\int_0^{\infty}e^{-{\mathfrak c}\xi}\xi^{-\gamma_1\theta-\gamma_2(1-\theta)}d\xi.
\end{align*}
Hence, noting that \eqref{eq:cond-1} implies $\gamma_1\theta+\gamma_2(1-\theta)<1$ and recalling \eqref{stima-normH}, we obtain \eqref{Thm:stime-unif2} with \begin{align*}
\mathfrak C_1=\frac{2M_0\left(1+\mathfrak{C}_0\sup_{t\geq 0}\norm{C(t,\cdot)}_{C_b^\theta(H;U)}\right)}{{\mathfrak{c}}^{1-\gamma_1\theta-\gamma_2(1-\theta)}(1-\gamma_1\theta-\gamma_2(1-\theta))}. 
\end{align*}

Now we prove \eqref{Thm:stime-unif1}. By \eqref{Prop:Schauder1} with $\vartheta=\theta$ we have
\[
\norm{\nabla R_n(t)\varphi}_{C_b(H_n;H_n)}\leq \norm{e^{tA_n}}^\theta_{\mathcal{L}(H_n)}\norm{\Gamma_{t,n}}_{\mathcal{L}(H_n)}^{1-\theta}\norm{\varphi}_{C^\theta_b(H_n)},\qquad t\in (0,T],\ \varphi\in C^\theta_b(H_n).
\]
Recalling that $\norm{e^{tA_n}}^\theta_{\mathcal{L}(H_n)}\leq \norm{e^{tA}}^\theta_{\mathcal{L}(H)}$, by \eqref{SuperStima4} we infer that
\begin{equation}\label{C2}
\norm{\nabla R_n(t)\varphi}_{C_b(H_n;H_n)}\leq \frac{M_0}{t^{(1-\theta)\gamma_3}}\norm{\varphi}_{C^\theta_b(H_n)},\qquad t\in (0,T],\ \varphi\in C^\theta_b(H_n),
\end{equation}
where $\gamma_3$ is given by \eqref{G4} (or \eqref{G4v}). Arguing as above, by \eqref{StimaC0} and \eqref{C2}, for every $t\in [0,T]$ we have 
\begin{align*}
& \norm{\nabla_x u_{n,k}^{(i)}(t,\cdot)}_{C_b(H_n;H_n)} \\
\leq & M_0\int_t^{T}e^{-{\mathfrak c}(s-t)r_{k,i}}\frac{1}{(s-t)^{\gamma_3(1-\theta)}}\norm{g(s,\cdot)}_{C^\theta_b(H_n)}\left[1+\mathfrak{C}_0\norm{C_n(s,\cdot)}_{C_b^\theta(H_n;U)}\right]ds\\
&\leq \frac{M_0\left(1+\mathfrak C_0\sup_{t\in[0,T]}\|C_n(t,\cdot)\|_{C_b^\theta(H_n;U)}\right)}{r_{k,i}^{1-\gamma_3(1-\theta)}}\sup_{t\in [0,T]}\norm{g(t,\cdot)}_{C^\theta_b(H_n)}\int_0^{\infty}e^{-{\mathfrak c}\xi}\xi^{-\gamma_3(1-\theta)}d\xi.
\end{align*}
Hence, noting that \eqref{eq:cond-2} implies $\gamma_3(1-\theta)<1$ and recalling \eqref{stima-normH}, we obtain \eqref{Thm:stime-unif1} with 
\begin{align*}
\mathfrak{C}_2=\frac{2M_0\left(1+\mathfrak C_0\sup_{t\geq 0}\|C(t,\cdot)\|_{C_b^\theta(H;U)}\right)}{{\mathfrak{c}}^{1-\gamma_3(1-\theta)}(1-\gamma_3(1-\theta))}.
\end{align*}

Finally, by \eqref{Prop:Schauder2}, with $\vartheta=\theta$, $h=\mathcal{G}_nu$ and $u\in U$ we have
\[
\norm{D_x\left[\mathcal{G}_n^*\nabla_x R_n(t)\varphi\right]}_{C_b(H_n;\mathcal{L}(H_n;U))}\leq \norm{e^{tA_n}}^\theta_{\mathcal{L}(H_n)}\norm{\Gamma_{t,n}}_{\mathcal{L}(H_n)}^{1-\theta}\norm{\Gamma_{t,n}\mathcal{G}_n}_{\mathcal{L}(U;H_n)}\norm{\varphi}_{C^\theta_b(H_n)}
\]
for every $t\in (0,T]$ and $\varphi\in C^\theta_b(H_n)$. 
By \eqref{SuperStima3} and \eqref{SuperStima4}, we obtain
\begin{equation}\label{C3V}
\norm{D_x\left[\mathcal{G}_n^*\nabla_x R_n(t)\varphi\right]}_{C_b(H_n;\mathcal{L}(H_n;U))}\leq \frac{M_0^2}{t^{\frac12+\gamma_3(1-\theta)}}\norm{\varphi}_{C^\theta_b(H_n)},\qquad t\in (0,T],\ \varphi\in C^\theta_b(H_n).
\end{equation}
Performing the same computations as above, we infer that
\begin{align*}
&\norm{D_x\left[\mathcal{G}_n^*\nabla_x u_{n,k}^{(i)}(t,\cdot)\right]}_{C_b(H_n;\mathcal{L}(H_n;U))} \\
\leq & \frac{M_0^2\left(1+\mathfrak C_0\sup_{t\in[0,T]}\|C_n(t,\cdot)\|_{C_b^\theta(H_n;U)}\right)}{r_{k,i}^{\frac12-\gamma_3(1-\theta)}}\sup_{t\in [0,T]}\norm{g(t,\cdot)}_{C^\theta_b(H_n)}\int_0^{\infty}e^{-{\mathfrak c}\xi}\xi^{-\frac12-\gamma_3(1-\theta)}d\xi
\end{align*}
for every $t\in [0,T]$. Noting that \eqref{eq:cond-2} implies $\frac12+\gamma_3(1-\theta)<1$ and recalling \eqref{stima-normH}, we obtain \eqref{Thm:stime-unif3} with 
\begin{align*}
\mathfrak C_3=\frac{2M_0^2\left(1+\mathfrak C_0\sup_{t\geq 0}\|C(t,\cdot)\|_{C_b^\theta(H;U)}\right)}{{\mathfrak{c}}^{\frac12-\gamma_3(1-\theta)}(\frac12-\gamma_3(1-\theta))}.
\end{align*}
\end{proof}

Let $n\in\N$, $k\in\{1,\ldots,n\}$, $i\in\{-,+\}$ and $\theta\in (0,1)$. In the next proposition, we will show that, for every $\mathfrak c>{\mathfrak c}_0$, the solution $u_{n,k}^{(i)}$ to \eqref{Def:Unki} given by Theorem \ref{Thm: soluzione-kolmogorov} is also a strong solution to the backward Kolmogorov equation
{\small
\begin{align*}
\left\{
\begin{array}{ll}
\displaystyle \frac{\partial}{\partial t}v(t,x)+(L_nv(t,\cdot))(x)=-\scal{\mathcal{V}_n^*\nabla_x v(t,x)}{C_{n}(t,x)}_{U}-g(t,x)-{\mathfrak c}\lambda_{k}^{(i)}v(t,x), & (t,x)\in [0,T)\times H_n, \\[2mm]
v(T,x)=0, & x\in H_n,
\end{array}
\right.
\end{align*}
}
where
\begin{align}
\label{Def:OU}
(L_n \varphi)(x)=\frac12\sum_{j,r=1}^{2n}[(\mathcal{G}_n\mathcal{G}_n^*)_{jr}D^2_{jr}\varphi(x)]+\langle A_n x,\nabla \varphi(x)\rangle_{H_n}, \qquad x\in H_n, \ \varphi\in C^2_b(H_n).
\end{align}

\begin{thm}\label{Thm:OU-finito}
Assume that hypotheses \ref{H0}, \ref{H1}, \ref{H2b}, \ref{H3} and \eqref{eq:cond-1} hold. For every $n\in\N$, $k\in\{1,\ldots,n\}$, $i\in\{-,+\}$ and $g:[0,T]\times H_n\rightarrow \R$ continuous such that 
\[
\sup_{t\in [0,T]}\norm{g(t,\cdot)}_{C^\theta_b(H_n)}<\infty,
\]
let $\mathfrak c>{\mathfrak c}_0$ and let $u_{n,k}^{(i)}$ be the unique solution to \eqref{Def:Unki} given by Theorem \ref{Thm: soluzione-kolmogorov}.\\
Then, there exists a sequence $\{f_{n,k,h}\}_{h\in\N}\subseteq C^{0,2}_b([0,T]\times H_n)$ such that:
\begin{enumerate}
\item 
$\displaystyle 
\sup_{h\in\N}\sup_{t\in[0,T]}\|f_{n,k,h}(t,\cdot)\|_{C_b^\theta(H_n)}<\infty$ and
\begin{align}
\lim_{h\rightarrow\infty}\norm{f_{n,k,h}(t,\cdot)-\scal{\mathcal{V}_n^*\nabla_x u^{(i)}_{n,k}(t,\cdot)}{C_n(t,\cdot)}_{U}-g(t,\cdot)}_{C_b(H_n)}=0, \qquad t\in[0,T];\label{Thm:OU-finito2}
\end{align}
\item  for every $h\in\N$, the parabolic equation 
\begin{equation}
\label{parabolica_generale}
\begin{cases}
\frac{\partial}{\partial t}v(t,x)+L_nv(t,x)=-f_{n,k,h}(t,x)-{\mathfrak c}\lambda_{k}^{(i)}v(t,x), & (t,x)\in[0,T)\times H_n, \\[1mm]
 v(T,x)=0, & x\in H_n,
\end{cases}
\end{equation}
has a unique strict solution $u^{(i)}_{n,k,h}\in C^{1,2}_b([0,T]\times H_n)$ given by
\begin{align}\label{Kolmo-nkh}
u^{(i)}_{n,k,h}(t,x)=\int_{t}^Te^{{\mathfrak c}(s-t)\lambda_k^{(i)}}(R_n(s-t)f_{n,k,h}(s,\cdot))(x)ds, \qquad (t,x)\in[0,T] \times H_n;
\end{align}
\item for every $t\in [0,T]$ we have
\begin{align}
&\lim_{h\rightarrow \infty}\norm{u^{(i)}_{n,k,h}(t,\cdot)-u^{(i)}_{n,k}(t,\cdot)}_{C^1_{b,\mathcal{V}_n}(H_n)}=0,\label{Thm:OU-finito3}\\
&\lim_{h\rightarrow \infty}\norm{\mathcal{G}_n^*\nabla u^{(i)}_{n,k,h}(t,\cdot)-\mathcal{G}_n^*\nabla u^{(i)}_{n,k}(t,\cdot)}_{C_{b}(H_n;U)}=0.\label{Thm:OU-finito4}
\end{align}

\end{enumerate}
\end{thm}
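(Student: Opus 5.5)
The plan is to regularize the forcing term of the fixed-point equation and then transfer the bounds of Theorems \ref{Thm: soluzione-kolmogorov} and \ref{Thm:stime-unif} to the regularized problems. Set
\[
F(t,x):=\scal{\mathcal{V}_n^*\nabla_x u^{(i)}_{n,k}(t,x)}{C_n(t,x)}_{U}+g(t,x).
\]
By Theorem \ref{Thm: soluzione-kolmogorov} and \eqref{stima-normH}, $F$ is continuous on $[0,T]\times H_n$ and $\sup_{t}\|F(t,\cdot)\|_{C^\theta_b(H_n)}<\infty$, since the product of two bounded $\theta$-H\"older functions is $\theta$-H\"older.

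\textbf{Step 1: construct the approximations.} Because $H_n$ is finite dimensional, I would take $f_{n,k,h}(t,\cdot)$ to be a convolution of $F(t,\cdot)$ with a mollifier $\rho_h$ on $H_n\simeq\mathbb C^{2n}$ (real coordinates). Convolution does not increase the sup norm or the $\theta$-seminorm, which gives the uniform bound in item (1). Each $f_{n,k,h}(t,\cdot)$ is $C^2_b$, with bounds depending on $h$ but uniform in $t$, and continuity in $t$ is inherited from that of $F$. Uniform convergence in \eqref{Thm:OU-finito2} follows from $\|F(t,\cdot)*\rho_h-F(t,\cdot)\|_\infty\le[F(t,\cdot)]_\theta h^{-\theta}$. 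If continuity of $\nabla^2 f$ jointly in $(t,x)$ is needed, it also follows from the convolution.

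\textbf{Step 2: solve the smooth parabolic problem.} For smooth, bounded $f_{n,k,h}$, the function \eqref{Kolmo-nkh} is the standard variation-of-constants formula for the finite-dimensional Ornstein--Uhlenbeck operator $L_n$ with the zero-order term ${\mathfrak c}\lambda_k^{(i)}$. The facts I would use are that $R_n(t)$ maps $C^2_b$ into $C^2_b$ with $D^jR_n(t)\varphi(x)=\int D^j\varphi(e^{tA_n}x+y)(e^{tA_n}\cdot,\ldots)\mathcal N(0,Q_{t,n})(dy)$, and that $\partial_tR_n(t)\varphi=L_nR_n(t)\varphi$ for $\varphi\in C^2_b$ (see \cite{Dap-Zab2002}). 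Differentiating under the integral in $t$ then shows that $u^{(i)}_{n,k,h}\in C^{1,2}_b$ solves \eqref{parabolica_generale}. For uniqueness, let $w$ be the difference of two strict solutions and apply It\^o's formula to $e^{{\mathfrak c}\lambda_k^{(i)}(s-t)}w(s,X^x(s))$, where $X^x$ is the finite-dimensional OU process started at $x$ at time $t$. Taking expectations gives $w(t,x)=0$; this requires handling the unbounded drift $\langle A_nx,\nabla w\rangle$, which is routine in finite dimensions since $w\in C^{1,2}_b$.

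\textbf{Step 3: the convergences \eqref{Thm:OU-finito3}--\eqref{Thm:OU-finito4}.} Subtracting \eqref{Def:Unki} from \eqref{Kolmo-nkh} gives
\[
u^{(i)}_{n,k,h}(t)-u^{(i)}_{n,k}(t)=\int_t^Te^{{\mathfrak c}(s-t)\lambda_k^{(i)}}R_n(s-t)\big(f_{n,k,h}(s)-F(s)\big)ds.
\]
The sup-norm of this difference is bounded by $T\sup_s\|f_{n,k,h}(s)-F(s)\|_\infty$. For the gradients, \eqref{Prop:regOU1}, \eqref{SuperStima2} and \eqref{SuperStima3} give the integrable singularities $(s-t)^{-\gamma_2}$ and $(s-t)^{-1/2}$, since $\gamma_2<1$ by \eqref{eq:cond-1}. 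Hence both $\mathcal V_n^*\nabla$ and $\mathcal G_n^*\nabla$ of the difference are bounded by $C\int_t^T(s-t)^{-\gamma_2\vee 1/2}\|f_{n,k,h}(s)-F(s)\|_\infty ds$.

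\textbf{Main obstacle.} The difficulty in Step 3 is that \eqref{Thm:OU-finito2} is only stated pointwise in $t$. The way through is to note that Step 1 in fact yields $\sup_s\|f_{n,k,h}(s)-F(s)\|_\infty\le Ch^{-\theta}$ uniformly in $s$, which makes the convergence immediate; dominated convergence would also suffice. The remaining delicate point is regularity in Step 2: one must check that $t\mapsto f_{n,k,h}(t,\cdot)$ is continuous in $C^2_b$, so that $u^{(i)}_{n,k,h}\in C^{1,2}_b$. Since $F$ is only continuous in $t$, I expect to need local uniform continuity of $F$ in $(t,x)$ on compacts together with the mollification.
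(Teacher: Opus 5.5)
Your proposal is correct and follows essentially the same route as the paper. You mollify the forcing term in $x$, use the smoothing property and the Kolmogorov identity $\partial_t R_n(t)\varphi=L_nR_n(t)\varphi$ of the finite-dimensional Ornstein--Uhlenbeck semigroup, and then control the $\mathcal V_n$-gradient of $u^{(i)}_{n,k,h}-u^{(i)}_{n,k}$ through the integrable singularity $(s-t)^{-\gamma_2}$ coming from \eqref{SuperStima2}.

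The differences are minor. For \eqref{Thm:OU-finito4} you invoke \eqref{SuperStima3} directly, whereas the paper reduces it to \eqref{Thm:OU-finito3} via $\mathcal G_n^*=\Lambda_n^{-(\delta+\beta)}\mathcal V_n^*$ and $\sup_n\|\Lambda_n^{-(\delta+\beta)}\|_{\mathcal L(U)}<\infty$. Your uniform-in-$s$ mollification bound (or dominated convergence) and your It\^o-formula uniqueness argument make explicit two points that the paper's sketch leaves implicit.
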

\begin{proof}
Since $\dim(H_n)<\infty$, all these statements are standard; we refer to \cite{Lun1997} for a detailed discussion. However, for the reader’s convenience, we present a sketch of the proof.

Fix $n\in\N$, $k\in\{1,\ldots,n\}$, $i\in\{-,+\}$ and $g$ as in the statement and let $\mathfrak c>{\mathfrak c}_0$. By assumption \ref{H2b} and Theorem \ref{Thm: soluzione-kolmogorov}, the function $x\rightarrow \scal{\mathcal{V}_n^*\nabla_x u^{(i)}_{n,k}(t,x)}{C_n(t,x)}_{U}+g(t,x)$ belongs to $C^\theta_b(H_n)$ for every $t\in [0,T]$. Therefore, by standard approximation arguments there exists a sequence $\{f_{n,k,h}\}_{h\in\N}\subseteq C^{0,2}_b([0,T]\times H_n)$ ($f_{n,k,h}$ should be defined by means of convolution with mollifiers for every $h\in\N$) such that $\displaystyle 
\sup_{h\in\N}\sup_{t\in[0,T]}\|f_{n,k,h}(t,\cdot)\|_{C_b^\theta(H_n)}<\infty$ and \eqref{Thm:OU-finito2} hold true.

From the definition of $\{R_n(t)\}_{t\geq 0}$ and the Cameron-Martin formula, we infer that, for every $t\geq 0$ and $\varphi\in C^2_b(H_n)$,
\begin{align*}
&R_n(t)\varphi\in C^2_b(H_n), \qquad \frac{\partial}{\partial t}(R_n(t)\varphi)=(L_nR_n(t)\varphi)=(R_n(t)L_n\varphi),
\end{align*}
where $L_n$ is the Ornstein-Uhlenbeck operator given by \eqref{Def:OU}. Since $f_{n,k,h}\in C^{0,2}_b([0,T]\times H_n)$ for every $h\in \N$, the function $u_{n,k,h}^{(i)}$ given by \eqref{Kolmo-nkh} solves \eqref{parabolica_generale} for every $h\in\N$. 

We prove \eqref{Thm:OU-finito3}. We set
\[
f_{n,k}(t,x):=\scal{\mathcal{V}_n^*\nabla_x u^{(i)}_{n,k}(t,x)}{C_n(t,x)}_{U}+g(t,x),\qquad t\in [0,T],\ x\in H_n.
\]
By \eqref{Prop:Vschauder2}, with $\vartheta=0$, for every $h\in \N$ and $t\in[0,T]$ we have
\begin{align}\label{pre-convergenza}
& \norm{u^{(i)}_{n,k,h}(t,\cdot)-u^{(i)}_{n,k}(t,\cdot)}_{C^1_{b,\mathcal{V}_n}(H_n)} \notag \\
\leq & \int_t^T e^{-{\mathfrak c}(s-t)r_{k,i}}\left(1+2\norm{\Gamma_{s-t,n}\mathcal{V}_n}_{\mathcal{L}(U;H_n)}\right)\norm{f_{n,k,h}(s,\cdot)-f_{n,k}(s,\cdot)}_{C_b(H_n)}ds.
\end{align}
Then, \eqref{SuperStima2} (taking into account that \eqref{eq:cond-1} implies $\gamma_2<1$) and \eqref{pre-convergenza} yield \eqref{Thm:OU-finito3}.

Finally, to prove \eqref{Thm:OU-finito4} it suffices to notice that, since $\delta,\beta\geq 0$, we have $\mathcal G_n^*=\Lambda_n^{-(\delta+\beta)}
\mathcal V_n^*$ for every $n\in\N$ and $\sup_{n\in\N}\big\|\Lambda_n^{-(\delta+\beta)}
\big\|_{\mathcal L(U)}=\mathfrak C<\infty$. Hence, from \eqref{Thm:OU-finito3}, for every $t\in [0,T]$ we obtain
{\small
\begin{align*}
& \lim_{h\rightarrow \infty}\norm{\mathcal{G}_n^*\nabla u^{(i)}_{n,k,h}(t,\cdot)-\mathcal{G}_n^*\nabla u^{(i)}_{n,k}(t,\cdot)}_{C_{b}(H_n;U)} \leq  \mathfrak C\lim_{h\rightarrow \infty}\norm{\mathcal{V}_n^*\nabla u^{(i)}_{n,k,h}(t,\cdot)-\mathcal{V}_n^*\nabla u^{(i)}_{n,k}(t,\cdot)}_{C_{b}(H_n;U)}=0.
\end{align*}
}
\end{proof}

\section{Uniqueness}\label{sec:uniq}

\subsection{The approximating process}
Assume that \ref{H0}, \ref{H0.5}, \ref{H1}, \ref{H2b} and \ref{H3} hold. Given a weak mild solution $(\Omega,\mathcal F,(\mathcal F_t)_{t\geq0}, \mathbb P, W,X)$ to \eqref{abstract_formulation}, we consider the approximating sequence of processes $\{X_n\}_{n\in\N}$, where, for every $t\in[0,T]$, the random variable $X_n(t)$ satisfies
\begin{align}
\label{Def:mild_form_approx_process}
X_n(t)=e^{tA_n}P_nx+\int_0^t e^{(t-s)A_n}\mathcal V_nC_n(s,X(s))ds+\int_0^t e^{(t-s)A_n}\mathcal G_ndW(s), \qquad \mathbb P\textrm{-a.s.},    
\end{align}
where $A_n=P_nA=AP_n$, $\mathcal{V}_n=P_n\mathcal{V}=\mathcal VQ_n$, $C_n(t,y)=C(t,P_ny)$ and $\mathcal G_n=P_n\mathcal G=\mathcal GQ_n$ for every $n\in\N$, $t\in [0,T]$ and $y\in H$. \\
We stress that, for every $n\in\N$, the process $\{X_n\}_{n\in\N}$ takes values into the finite dimensional space $H_n$ and, for every $t\in[0,T]$, it fulfills, $\mathbb P$-a.s.,
\begin{align}
\label{finite_dim_SDE}
X_n(t)=P_nx+\int_0^t\left[A_nX_n(s)+\mathcal V_nC_n(s,X(s))\right]ds+\mathcal G_nW(t), \quad t\in[0,T], \qquad X_n(0)=P_nx.
\end{align}
For every $n\in\N$, $A_n\in\mathcal{L}(H_n)$ and $\{\mathcal G_nW(t)\}_{t\geq 0}$ is a $H_n$-valued stochastic process, so that the integral formulation \eqref{finite_dim_SDE} is well defined in $H_n$. Further, for every $n\in\N$ and $t\in[0,T]$ we set
\begin{align*}
W_{A,n}(t)=\int_0^te^{(t-s)A_n}\mathcal G_n dW(s), \qquad \mathbb P\textup{-a.s.}   
\end{align*}

\begin{prop}\label{prop:Xn}
Assume that \ref{H0}, \ref{H0.5}, \ref{H1}, \ref{H2b} and \ref{H3} hold and let $x\in H$. Then, for every $T>0$, 
\begin{equation}\label{supE}
\lim_{n \to \infty}\sup_{t\in [0,T]}\mathbb{E}
\|X_{n}(t)-X(t)\|_H^2=0\,.
\end{equation} 
\end{prop}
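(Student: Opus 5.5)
We compare the mild formulation \eqref{mild_solution_generale} of $X$ with \eqref{Def:mild_form_approx_process}, splitting $X_n(t)-X(t)$ into three terms:
\begin{align*}
X_n(t)-X(t)= & \big(e^{tA}P_nx-e^{tA}x\big)+\int_0^t\big(e^{(t-s)A_n}\mathcal V_nC_n(s,X(s))-e^{(t-s)A}\mathcal VC(s,X(s))\big)ds \\
& +\big(W_{A,n}(t)-W_A(t)\big)=:I_n(t)+J_n(t)+Z_n(t).
\end{align*}
We bound $\sup_{t\in[0,T]}\mathbb E\|\cdot\|_H^2$ of each term separately and show that each bound tends to zero.

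\emph{The term $I_n$.} Since $e^{tA_n}P_nx=e^{tA}P_nx$, Remark \ref{rmk:exp-stab} gives $\|I_n(t)\|_H\le \sqrt2 c_0\|P_nx-x\|_H$ uniformly in $t$. The right-hand side tends to $0$ by Remark \ref{norm-Pn}.

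\emph{The term $Z_n$.} We have $W_{A,n}(t)=P_nW_A(t)$, because $e^{(t-s)A_n}\mathcal G_n=P_ne^{(t-s)A}\mathcal GQ_n$ and, by the structure \eqref{lambda_beta_n}, $P_n\mathcal G=\mathcal GQ_n$. By the It\^o isometry,
\[
\mathbb E\|Z_n(t)\|_H^2=\int_0^t\|(P_n-\Id)e^{sA}\mathcal G\|_{HS(U,H)}^2ds.
\]
Using the spectral representation \eqref{FourierETA} and the bounds \eqref{damped_stime_coefficienti_avl}, \eqref{damped_stime_coefficienti_avl_2}, the integrand is controlled by the tail $\sum_{k>n}\mu_k^{-2\delta}\int_0^\infty e^{-2r_{k,\pm}s}|b_k^\pm|^2ds$. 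Up to constants this tail is $\sum_{k>n}\mu_k^{-2\delta-\alpha}$ (or the analogous sum with $1-\alpha$), which is summable by \ref{H3}. Hence $\sup_t\mathbb E\|Z_n(t)\|_H^2\to0$.

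\emph{The term $J_n$ (main obstacle).} We rewrite $e^{(t-s)A_n}\mathcal V_n=P_ne^{(t-s)A}\mathcal VQ_n$ and split
\[
J_n(t)=\int_0^tP_ne^{(t-s)A}\mathcal V\big[Q_nC(s,P_nX(s))-C(s,X(s))\big]ds+\int_0^t(P_n-\Id)e^{(t-s)A}\mathcal VC(s,X(s))ds.
\]
\begin{itemize}
\item \emph{First integral.} By Remark \ref{norm-Pn} and Proposition \ref{prop:estens_etAB}, its norm is at most
\[
c\int_0^t C_1(t-s)^{-\gamma}\big(\|(Q_n-\Id)C(s,X(s))\|_U+[C]_\theta\|P_nX(s)-X(s)\|_H^\theta\big)ds,
\]
with $\gamma<1$ since $\beta<\alpha$; in the case $\alpha\ge\frac12$ with the max-exponent, \ref{H3} still gives $\gamma<1$. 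We apply Cauchy--Schwarz with the finite measure $(t-s)^{-\gamma}ds$, then take expectations. The integrand is bounded by $2\sup\|C\|_\infty$ plus a multiple of $\|X(s)\|_H^\theta$, and it tends to $0$ pointwise in $(s,\omega)$ because $Q_n\to\Id$ strongly on $U$ and $P_n\to\Id$ strongly on $H$ (Remark \ref{norm-Pn}). Using continuity of the paths of $X$, so that $\sup_s\|X(s)\|_H<\infty$ a.s., together with boundedness of $C$ under \ref{H2b}, dominated convergence gives convergence to zero.
\item \emph{Uniformity in $t$.} To get the limit uniformly in $t$, we bound the integral over $s\in[0,T]$ by $\sup_t\int_0^t(t-s)^{-\gamma}f_n(s)\,ds\le \big(\int_0^T\tau^{-\gamma p'}d\tau\big)^{1/p'}\|f_n\|_{L^p(0,T)}$, choosing $p'$ with $\gamma p'<1$. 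The norm $\mathbb E\|f_n\|_{L^p}^p$ then tends to zero by dominated convergence.
\item \emph{Second integral.} We handle it the same way, through the pointwise bound
\[
\|(P_n-\Id)e^{\tau A}\mathcal Vu\|_H\le C\tau^{-\gamma}\|u\|_U.
\]
For fixed $\tau>0$ the left-hand side tends to $0$, since $e^{\tau A}\mathcal Vu\in H$ and $P_n\to\Id$ strongly. The domination is $C\tau^{-\gamma}\|C\|_\infty$, which is integrable.
\end{itemize}

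Combining the three estimates yields \eqref{supE}. The delicate point is to obtain uniformity in $t$ for the singular convolution $J_n$. The Young/H\"older bound with exponent $p'$ such that $\gamma p'<1$ is designed to handle this; an alternative would be an equicontinuity argument in $t$ combined with pointwise convergence.
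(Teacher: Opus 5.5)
Your decomposition and tools are those of the paper: the It\^o isometry with a Hilbert--Schmidt tail for $Z_n$, H\"older's inequality with an exponent $p'$ such that $\gamma p'<1$ plus dominated convergence for the drift, and strong convergence of $P_n$ for the initial datum. However, one step does not go through as written: the uniformity in $t$ for the second integral of $J_n$.

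Your uniformity device needs a bound of the form $(t-s)^{-\gamma}f_n(s)$ with $f_n$ independent of $t$ and $f_n\to0$. For $\int_0^t(P_n-\Id)e^{(t-s)A}\mathcal VC(s,X(s))\,ds$ you only have $\norm{(P_n-\Id)e^{\tau A}\mathcal Vu}_H\le C\tau^{-\gamma}\norm{u}_U$, together with convergence to $0$ for each \emph{fixed} $\tau>0$. The quantity that vanishes, $\tau^{\gamma}\norm{(P_n-\Id)e^{\tau A}\mathcal VC(s,X(s))}_H$ with $\tau=t-s$, depends on $t$ through the semigroup. Hence ``the same way'' only yields convergence for each fixed $t$, not the $\sup_{t\in[0,T]}$ required in \eqref{supE}.

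The missing ingredient is the intertwining relation $P_ne^{\tau A}\mathcal V=e^{\tau A}\mathcal VQ_n$ on $U$. You can verify it on $D(\Lambda^\beta)$ with \eqref{lambda_beta_n} and \eqref{rep_etA_n}, then extend it by density through Proposition \ref{prop:estens_etAB}. It is exactly the identity $e^{\tau A_n}\mathcal V_n=e^{\tau A}\mathcal VQ_n$ on which the paper's proof rests, and it gives
\[
\norm{(P_n-\Id)e^{(t-s)A}\mathcal VC(s,X(s))}_H=\norm{e^{(t-s)A}\mathcal V(Q_n-\Id)C(s,X(s))}_H\le C_1(t-s)^{-\gamma}\norm{(Q_n-\Id)C(s,X(s))}_U .
\]
The last factor does not depend on $t$, is bounded by $\norm{C}_\infty$, and tends to $0$ pointwise, so your H\"older argument now applies. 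With this relation your two integrals reduce exactly to the paper's terms $\int_0^te^{(t-s)A}\mathcal VQ_n[C(s,P_nX(s))-C(s,X(s))]\,ds$ and $\int_0^te^{(t-s)A}\mathcal V[Q_nC(s,X(s))-C(s,X(s))]\,ds$. The piece $(Q_n-\Id)C(s,X(s))$ that you split off inside the first integral is annihilated by the $P_n$ in front.

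An alternative fix uses compactness. Since $s\mapsto C(s,X(s))$ is continuous, its range is compact in $U$, and the uniformly bounded, strongly null operators $(P_n-\Id)e^{\tau A}\mathcal V$ converge to $0$ uniformly on it. Then $\sup_{t}\int_0^t\norm{(P_n-\Id)e^{(t-s)A}\mathcal VC(s,X(s))}_H\,ds\le\int_0^T\sup_{s\in[0,T]}\norm{(P_n-\Id)e^{\tau A}\mathcal VC(s,X(s))}_H\,d\tau$, and dominated convergence in $\tau$ concludes.

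Two smaller remarks:
\begin{itemize}
\item For domination in $\omega$, bound the whole integrand by the deterministic constant $2\norm{C}_\infty$. The H\"older continuity of $C$ is not needed here, and a.s.\ finiteness of $\sup_s\norm{X(s)}_H$ does not provide an integrable majorant.
\item In $Z_n$, for $\alpha\geq\frac12$ the tail is still $\sum_{k>n}\mu_k^{-2\delta-\alpha}$, because $|b_k^+|^2\sim\mu_k^{1-2\alpha}$ compensates $r_{k,+}^{-1}\sim\mu_k^{\alpha-1}$. A tail with exponent $-2\delta-(1-\alpha)$ would not be controlled by \ref{H3}.
\end{itemize}
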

\begin{proof}
Fix $x\in H$. We begin by proving that 
\begin{align}\label{convsupWA}
\lim_{n\rightarrow \infty}\sup_{t\in [0,T]}\mathbb{E}\left[\|W_A(t)-W_{A,n}(t)\|_H^2\right]=0.
\end{align}
From the definition of $W_A$ and of $W_{A,n}$, for every $t\geq0$ we can write
\begin{align*}
W_A(t)-W_{A,n}(t)=\int_0^t e^{(t-s)A}\left(\mathcal{G}-P_n\mathcal{G}\right)dW(s), \qquad \mathbb P\textup{-a.s.}
\end{align*}

Let $\{\widetilde e_k\}_{k\in\N}$ be the orthonormal basis of $U$ introduced in 
\eqref{Def:base-ON}. By \eqref{rep_P_nh} we infer that
\[
(I-P_n)\mathcal G\widetilde e_k=
\begin{cases}
0, & k\leq n,\\
\mathcal G\widetilde e_k, & k>n .
\end{cases}
\]
Therefore, for every $t\in[0,T]$ and every $n\in\N$, we get
\begin{align*}
\mathbb{E}\left[\|W_A(t)-W_{A,n}(t)\|^2_H\right]
&=\int_0^t \left\|e^{(t-s)A}\left(I-P_n\right)\mathcal{G}\right\|_{\mathcal{L}_2(U;H)}^2ds
=\int_0^t\sum^{\infty}_{k=n+1} \|e^{sA}\mathcal{G}\widetilde e_k\|_H^2\,ds\\
&\leq \int_0^T\sum^{\infty}_{k=n} \|e^{sA}\mathcal{G}\widetilde e_k\|_H^2\,ds .
\end{align*}
By \ref{H3} we have
\[
\int_0^T\sum_{k=1}^{\infty} \|e^{sA}\mathcal{G}\widetilde e_k\|_H^2\,ds
=\int_0^T\|e^{sA}\mathcal G\|^2_{\mathcal L_2(U;H)}ds
=\int_0^T {\rm Trace}_H\left[e^{sA}\mathcal{G}\mathcal{G}^*e^{sA^*}\right]ds<\infty,
\]
and so that, by the dominated convergence theorem we obtain \eqref{convsupWA}.

We pass to the second term. Let $\gamma_1$ be the constant given by \eqref{G1} (or \ref{G1v}). By \ref{H3}, we have $\gamma_1<1$. Let us fix $q\in\left(1,\frac1{\gamma_1}\right)$ and $p:=\frac{q}{q-1}$, so that $\frac1p+\frac1q=1$. By estimates \eqref{stima_tilde S} and \ref{SuperStima1}, the H\"older inequality and the fact that, for every $s>0$ and $n\in\N$, $e^{sA_n}\mathcal V_n=e^{sA}\mathcal VQ_n$, where $Q_n$ is the projection on $U_n$, for every $n\in\N$ and $t>0$ we get
\begin{align*}
&\mathbb{E}\left[\norm{\int_0^te^{(t-s)A_n}\mathcal V_nC_n(s,X(s))ds
-\int_0^te^{(t-s)A}\mathcal VC(s,X(s))ds}^2_H\right]\\
\leq & 2\mathbb{E}\left[\norm{\int_0^t  e^{(t-s)A_n}\mathcal{V}_n
\left[C(s,P_nX(s))
- C(s,X(s))\right]ds}^2_H\right]\\
& +2\mathbb{E}\left[\norm{\int_0^t  e^{(t-s)A}\mathcal V[Q_nC(s,X(s))-C(s,X(s))]ds}^2_H\right] \\
\leq & 2\mathbb E\left[M_0\left(\int_0^t\frac{1}{(t-s)^{q\gamma_1}}ds\right)^{\frac1q}\left(\int_0^t\|C(s,P_nX(s))-C(s,X(s))\|_{U}^pds\right)^{\frac1p}\right]^2 \\
& +2\mathbb E\left[ M_0\left(\int_0^t\frac{1}{(t-s)^{q\gamma_1}}ds\right)^{\frac1q}\left(\int_0^t\|Q_nC(s,X(s))-C(s,X(s))\|_{U}^pds\right)^{\frac1p}\right]^2 \\
\leq & 2\left(\frac{(M_0)^qT^{1-q\gamma_1}}{1-q\gamma_1}\right)^{2/q} \mathbb E\left[\left(\int_0^T\|C(s,P_nX(s))-C(s,X(s))\|_{U}^pds\right)^{\frac1p}\right]^2 \\
& + 2\left(\frac{(M_0)^qT^{1-q\gamma_1}}{1-q\gamma_1}\right)^{2/q} \mathbb E\left[\left(\int_0^T\|Q_nC(s,X(s))-C(s,X(s))\|_{U}^pds\right)^{\frac1p}\right]^2 \\
= & I_1(n)+I_2(n).
\end{align*}
The convergence of $(I_1(n))_{n\in\N}$ to $0$ follows from the dominated convergence theorem, since the sequence $\|C(s,P_nX(s))-C(s,X(s))\|_{U}$ tends to $0$ as $n$ tends to infinity, for every $s\in[0,T]$, and $\|C(s,P_nX(s))-C(s,X(s))\|_{U}\leq 2\|C\|_{C_b([0,T]\times H;U)}$ for every $s\in[0,T]$ and every $n\in\N$. 

Since also $\|Q_nC(s,X(s))-C(s,X(s))\|_{U}$ tends to $0$ as $n$ tends to infinity for every $s\in[0,T]$ and $\|Q_nC(s,X(s))-C(s,X(s))\|_{U}\leq 2\|C\|_{C_b([0,T]\times H;U)}$ for every $s\in[0,T]$ and every $n\in\N$, it follows that also $(I_2(n))_{n\in\N}$ vanishes as $n$ tend to infinity.

Finally, since $A_n=AP_n$ we have 
\begin{align*}
\sup_{t\geq0}
\norm{e^{tA_n} P_nx-e^{tA}x}^2_H
&=\sup_{t\geq0}\norm{e^{tA}(P_nx-x)}^2_H
\leq 
\sup_{t\geq0}\norm{e^{tA}}^2_{\mathscr{L}(H)}
\norm{P_nx-x}^2_H\,,
\end{align*}
which implies, recalling that $A$ is of negative type thanks to \ref{H1}, that
\[
\lim_{n\rightarrow\infty}
\sup_{t\geq0}\norm{e^{tA_n} P_nx-e^{tA}x}^2_H=0\,.
\]
\end{proof}

\subsection{Weak Uniqueness}

\begin{thm}\label{weak-B}
Assume that conditions \ref{H0}, \ref{H0.5}, \ref{H1}, \ref{H2b}, \ref{H3} and 
    \begin{equation*}
\beta+\delta<\frac12\min\{\alpha,1-\alpha\}
\end{equation*}
hold true. Then, weak uniqueness in the sense of Definition \ref{uniqueness} holds for \eqref{abstract_formulation}.
\end{thm}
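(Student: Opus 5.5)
We will prove that, under the stated assumptions, the one-dimensional time marginals of any weak mild solution are determined by the data, and then pass to the full law using the appendix. By Appendix \ref{app:mart}, weak mild solutions are equivalent to solutions of a time-inhomogeneous martingale problem. The standard Stroock--Varadhan argument (the Markov/regular conditional probability argument of Ethier--Kurtz type) then reduces weak uniqueness to the following claim: for every $t_0$ and every initial condition, the one-dimensional time marginals are uniquely determined. Concretely, it suffices to show the following. For every $T>0$, every $x\in H$, and every $g$ in a class of functions that is measure determining (for instance $g(t,y)=\varphi(t)\,\psi(y)$ with $\psi$ bounded, H\"older, cylindrical, and $\varphi\ge0$ continuous), the quantity $\mathbb E\int_0^T g(s,X(s))\,ds$ is the same for any two weak mild solutions starting from $x$.

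\textbf{Step 1: Itô formula on the Kolmogorov solutions.} Fix $n$ and take $g_n=g(\cdot,P_n\cdot)$ restricted to $H_n$. For each $n$ we take the solution $u_n:=u^{(i)}_{n,k}$ of Theorem \ref{Thm: soluzione-kolmogorov}, or more simply the variant with $\mathfrak c\lambda_k^{(i)}$ replaced by a large negative real constant; the proofs of Theorems \ref{Thm: soluzione-kolmogorov} and \ref{Thm:stime-unif} carry over verbatim, with $r_{k,i}$ replaced by $\mathfrak c$. We first apply Itô's formula to the regularised functions $u_{n,k,h}$ of Theorem \ref{Thm:OU-finito}, evaluated along the finite-dimensional process $X_n$ of \eqref{finite_dim_SDE}. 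Because $X_n$ is driven by $\mathcal V_nC_n(s,X(s))$ rather than by $\mathcal V_nC_n(s,X_n(s))$, we obtain
\[
\mathbb E\,u_{n,h}(0,P_nx)=\mathbb E\int_0^T\Big[f_{n,h}(s,X_n(s))-\scal{\mathcal V_n^*\nabla u_{n,h}(s,X_n(s))}{C(s,P_nX(s))}_U+\mathfrak c\,\lambda\, u_{n,h}(s,X_n(s))\Big]ds .
\]
Letting $h\to\infty$ via \eqref{Thm:OU-finito2}--\eqref{Thm:OU-finito3}, the $f$-term cancels the drift term except for a remainder:
\[
\mathbb E\int_0^T \scal{\mathcal V_n^*\nabla u_n(s,X_n(s))}{C_n(s,X_n(s))-C_n(s,X(s))}_U\,ds .
\]
The remainder is bounded by $\sup\|\mathcal V_n^*\nabla u_n\|_\infty$, which is uniform in $n$ by \eqref{StimaC0}, times $\mathbb E\int_0^T\|C(s,P_nX_n(s))-C(s,P_nX(s))\|_U\,ds$. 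By Proposition \ref{prop:Xn} and the uniform H\"older continuity in \ref{H2b}, this tends to $0$.

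\textbf{Step 2: identifying the limit.} The resulting identity expresses $\mathbb E\int_0^T e^{\cdot}g(s,X_n(s))\,ds$ (up to the $\mathfrak c$-term) as $u_n(0,P_nx)+o(1)$. Here $u_n$ depends only on $x$, $C$, and $g$, not on the solution. Using Proposition \ref{prop:Xn} once more to replace $X_n$ by $X$ in the $g$ and $u_n$ terms — $u_n$ is Lipschitz in the $\mathcal V_n$ directions only, so this step uses the H\"older continuity of $g$ and the boundedness of $u_n$ — we conclude that $\mathbb E\int_0^T g(s,X(s))\,ds=\lim_n u_n(0,P_nx)$. This holds for both solutions, so their marginals coincide.

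\textbf{Main obstacle.} The main obstacle is the $\mathfrak c\lambda u$ term. To avoid it, we will choose $\lambda$ real (a resolvent-type parameter). Then the identity becomes an equation for $\mathbb E\int e^{-\mathfrak c\lambda' s}(\ldots)$, and uniqueness of these Laplace-type transforms still determines the marginals. A further, delicate point is the passage from $X_n$ to $X$ inside $u_n$, since $\nabla u_n$ is controlled only in $\mathcal V_n$ directions. If this fails, we will instead replace the tested quantity by $\mathbb E\,u_n(0,\cdot)$ evaluated at the solution itself, by applying the formula to the $H_n$-valued process $P_nX$. We expect this step, together with verifying the hypotheses of the appendix's martingale-problem equivalence in the nonautonomous setting, to require the most care.
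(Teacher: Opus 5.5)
Your proposal is correct and is essentially the paper's argument. The common steps are: Itô's formula for the regularized solutions of Theorem \ref{Thm:OU-finito} along $X_n$; control of the remainder via the $n$-uniform bound \eqref{StimaC0} on $\mathcal V_n^*\nabla u_n$ together with Proposition \ref{prop:Xn}; an exponential weight (a Laplace transform in time) to absorb the $\mathfrak c\lambda u$ term and identify the one-dimensional marginals; and finally Lemma \ref{lem:mart}. The paper simply takes $u_n=u^+_{n,1}$ with the weight $e^{\mathfrak c\lambda_1^+ s}$ and a time-independent $g\in C^\theta_b(H)$, instead of your real resolvent parameter and time-dependent test functions.

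The ``delicate point'' you flag does not arise. The terminal term vanishes, or, as in the paper, is bounded by $\sup|u_n|$ and removed by the exponential weight. The term $u_n(0,P_nx)$ is deterministic and cancels between the two solutions. So $X_n$ never has to be replaced by $X$ inside $u_n$.
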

\begin{proof}
We prove weak uniqueness in $C([0,\infty);H)$. Let $(X,W_1)$ and $(Y,W_2)$ be two weak mild solutions to \eqref{abstract_formulation} in the sense of Definition \ref{def:mild-sol} with same initial datum $x\in H$, and let $\{X_n\}_{n\in\N}$ and $\{Y_n\}_{n\in\N}$ be the respective approximated solutions defined by \eqref{Def:mild_form_approx_process}.
Let $g\in C^\theta_b(H)$ be arbitrary but fixed, let $n\in\N$ and let $u_n:=u^{+}_{n,1}$ be the solution to the Kolmogorov equation \eqref{Def:Unki} with $i=+$ and $k=1$. Let ${\mathfrak{c}}>\mathfrak c_0$ where $\mathfrak c_0$ is the constant given by Theorem \ref{Thm: soluzione-kolmogorov}. We consider the sequences $\{u_{n,h}\}_{h\in\N}$ and $\{f_{n,h}\}_{h\in\N}$ given by Theorem \ref{Thm:OU-finito} (with $i=+$ and $k=1$), which fulfill, for every $h\in\N$, 
\begin{align}
\left\{
\begin{array}{ll}
\displaystyle \frac{\partial}{\partial t}u_{n,h}(t,x)+L_nu_{n,h}(t,x)=-f_{n,h}(t,x)-{\mathfrak{c}}\lambda_{1}^{+}u_{n,h}(t,x), & x\in H_n, \  t\in [0,T),\\[2mm]
\displaystyle u_{n,h}(T,x)=0, & x\in H_n,
\end{array}
\right.
\label{Parabolica-nh}
\end{align}
where $L_n$ is the Ornstein-Uhlenbeck operator defined in \eqref{Def:OU}. Then, an application of the It\^o formula, together with the Kolmogorov equation \eqref{Parabolica-nh}, yields, for all $t\geq0$,
\begin{align*}
\nonumber &e^{{\mathfrak{c}}\lambda^+_1 t}\left(\mathbb{E}_1\left[u_{n,h}(t,X_n(t))\right]-\mathbb{E}_2\left[u_{n,h}(t,Y_n(t)))\right]\right)\\
&\phantom{aaaaaaa}+\mathbb{E}_1\left[\int_0^te^{{\mathfrak{c}}\lambda^+_1 s}f_{n,h}(s,X_n(s))ds\right]-\mathbb{E}_2\left[\int_0^te^{{\mathfrak{c}}\lambda^+_1 s}f_{n,h}(s,Y_n(s))ds\right]\\
&\phantom{aaaaaaa}=-\mathbb{E}_1\left[\int_0^te^{{\mathfrak{c}}\lambda^+_1 s}\scal{\mathcal{V}^*_n\nabla u_{n,h}(s,X_n(s))}{
C_n(s,X(s))}_{U}d s\right]\\
&\phantom{aaaaaaa}+\mathbb{E}_2\left[\int_0^te^{{\mathfrak{c}}\lambda^+_1 s}\scal{\mathcal{V}^*_n\nabla u_{n,h}(s,Y_n(s))}{
C_n(s,Y(s))}_{U}d s\right],
\end{align*}
letting $h$ tend to infinity, by Theorem \ref{Thm:OU-finito} we obtain
\begin{align*}
\nonumber &e^{{\mathfrak{c}}\lambda^+_1 t}\left(\mathbb{E}_1\left[u_{n}(t,X_n(t))\right]-\mathbb{E}_2\left[u_{n}(t,Y_n(t)))\right]\right)\\
&\phantom{aaaaaaa}+\mathbb{E}_1\left[\int_0^te^{{\mathfrak{c}}\lambda^+_1 s}g(X_n(s))ds\right]-\mathbb{E}_2\left[\int_0^te^{{\mathfrak{c}}\lambda^+_1 s}g(Y_n(s))ds\right]\\
&\phantom{aaaaaaa}=-\mathbb{E}_1\left[\int_0^te^{{\mathfrak{c}}\lambda^+_1 s}\scal{\mathcal{V}^*_n\nabla u_n(s,X_n(s))}{
C_n(s,X(s))-C_n(s,X_n(s))}_{U}d s\right]\\
&\phantom{aaaaaaa}+\mathbb{E}_2\left[\int_0^te^{{\mathfrak{c}}\lambda^+_1 s}\scal{\mathcal{V}^*_n\nabla u_n(s,Y_n(s))}{
C_n(s,Y(s))-C_n(s,Y_n(s))}_{U}d s\right].
\end{align*}
Hence,
\begin{align*}
&\abs{\int_0^te^{{\mathfrak c}\lambda^+_1 s}\mathbb{E}_1\left[g(X_n(s))\right]ds-\int_0^te^{{\mathfrak c}\lambda^+_1 s}\mathbb{E}_2\left[g(Y_n(s))\right]ds} \\
& \leq \abs{e^{{\mathfrak{c}}\lambda^+_1 t}\mathbb{E}_1\left[u_{n}(t,X_n(t))\right]}
+\abs{e^{{\mathfrak{c}}\lambda_1^+t}\mathbb{E}_2\left[u_{n}(t,Y_n(t))\right]}
\\
&+\mathbb{E}_1\left[\int_0^t\abs{e^{{\mathfrak{c}}\lambda^+_1 s}\scal{\mathcal{V}_n^*\nabla u_n(s,X_n(s))}{
C_n(s,X(s))-C_n(s,X_n(s))}_{U}}d s\right]\\
&+\mathbb{E}_2\left[\int_0^t\abs{e^{{\mathfrak{c}}\lambda^+_1 s}\scal{\mathcal{V}_n^*\nabla u_n(s,Y_n(s))}{
C_n(s,Y(s))-C_n(s,Y_n(s))}_{U}}d s\right]
\end{align*}
Recalling that ${\rm Re}(\lambda^+_1)<0$, by assumption \ref{H2b} and estimate \eqref{StimaC0} we have
\begin{align*}
&\abs{\int_0^te^{{\mathfrak{c}}\lambda^+_1 s}\mathbb{E}_1\left[g(X_n(s))\right]ds-\int_0^te^{{\mathfrak{c}}\lambda^+_1 s}\mathbb{E}_2\left[g(Y_n(s))\right]ds}\leq 2e^{{\mathfrak{c}} {\rm Re}(\lambda^+_1)t}\mathfrak{C}_0\norm{g}_{C^\theta_b(H)}
\\
&+\norm{g}_{C^\theta_b(H)}\mathfrak{C}_0\sup_{s\geq 0}\norm{C(s,\cdot)}_{C^\theta_b(H;U)}\int_0^t\left(\mathbb{E}_1\left[\norm{X_n(s)-X(s)}^\theta_{H}\right]+\mathbb{E}_2\left[\norm{Y_n(s)-Y(s)}^\theta_{H}\right]\right)ds
\end{align*}
Since $\theta\in (0,1)$, letting $n$ tend to infinity, by \eqref{supE} we obtain
\begin{align*}
&\abs{\int_0^te^{{\mathfrak{c}}\lambda^+_1 s}\mathbb{E}_1\left[g(X(s))\right]ds-\int_0^te^{{\mathfrak{c}}\lambda^+_1 s}\mathbb{E}_2\left[g(Y(s))\right]ds}\leq 2e^{{\mathfrak{c}} {\rm Re}(\lambda^+_1)t}\mathfrak{C}_0\norm{g}_{C^\theta_b(H)}.
\end{align*}
Letting $t$ tend to infinity, we conclude that
\[
\int_0^{\infty}e^{{\mathfrak c}\lambda^+_1 s}\mathbb{E}_1\left[g(X(s))\right]ds=\int_0^{\infty}e^{{\mathfrak c}\lambda^+_1 s}\mathbb{E}_2\left[g(Y(s))\right]ds.
\]
Since ${\mathfrak c}>{\mathfrak c}_0$ is arbitrary, by standard properties of the Laplace 
transform and approximation arguments we have
\begin{equation}\label{eq:marginali}
\mathbb E_1\big[g(X(s))\big]=\mathbb E_2\big[g(Y(s))\big],\qquad s\geq 0,\ g\in C_b(H).
\end{equation}
Moreover, by the same arguments used above, \eqref{eq:marginali} holds for every $t_0\geq0$, every $s\geq t_0$ and every pair of weak 
mild solutions on $[t_0,\infty)$ with the same $\mathcal F_{t_0}$-measurable initial datum.\\
From the uniqueness of the marginals, we obtain uniqueness in law in the sense of Definition \ref{uniqueness} by following the classical approach via the martingale problem; see the Lemma \ref{lem:mart} in the Appendix \ref{app:mart}. Lemma \ref{lem:mart} is a slight modification of the results in \cite[Appendix A.2]{Pri2015} to cover the case in which the nonlinear part of the drift depends on time.

\end{proof}

\begin{thm}\label{weak-UB}
Assume that conditions \ref{H0}, \ref{H0.5}, \ref{H1}, \ref{H2}, \ref{H3} and 
\begin{equation*}
\beta+\delta<\frac12\min\{\alpha,1-\alpha\}.
\end{equation*}
Then, weak uniqueness holds for \eqref{abstract_formulation} in the sense of Definition \ref{uniqueness}.
\end{thm}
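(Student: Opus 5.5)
The plan is to deduce Theorem \ref{weak-UB} from Theorem \ref{weak-B} by a localization procedure. The only difference between the two statements is that \ref{H2} gives local Hölder continuity uniformly in time, while \ref{H2b} asks for a globally bounded, globally Hölder coefficient.

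\textbf{Step 1: truncated coefficients.} For every $R>0$ let $\pi_R:H\to B_R(0)$ be the radial retraction, $\pi_R(x)=x$ if $\|x\|_H\le R$ and $\pi_R(x)=Rx/\|x\|_H$ otherwise. This map is $1$-Lipschitz. Set
\[
C_R(t,x):=C(t\wedge T,\pi_R(x)),\qquad t\geq0,\ x\in H.
\]
Continuity of $C$ on the compact-in-time set $[0,T]\times B_R(0)$ does not by itself give boundedness in infinite dimensions. Boundedness follows from the Hölder estimate in \ref{H2} instead:
\[
\|C(t,x)\|_U\le \sup_{t\in[0,T]}\|C(t,0)\|_U+L_R R^\theta .
\]
Here $\sup_t\|C(t,0)\|_U<\infty$ by continuity on $[0,T]$. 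Hence $C_R$ satisfies \ref{H2b} with the same $\theta$, and Theorem \ref{weak-B} gives weak uniqueness for the equation with drift $\mathcal V C_R$, for every fixed $R$.

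\textbf{Step 2: stopping times.} Take two weak mild solutions $(X,W_1)$ and $(Y,W_2)$ with $X(0)=Y(0)=x$, and define
\[
\tau_R^X:=\inf\{t\in[0,T]:\|X(t)\|_H\ge R\}\wedge T,
\]
and similarly $\tau_R^Y$. By continuity of paths, $\tau_R^X\uparrow T$ a.s. with $\tau_R^X=T$ eventually, and $\mathbb P_1(\tau_R^X<T)\to0$ as $R\to\infty$. On $[0,\tau_R^X]$ the process $X$ solves the mild equation with $C$ replaced by $C_R$. One has to check that the mild formulation survives stopping. I would use the martingale-problem/weak formulation of Appendix \ref{app:mart}, which is equivalent to the mild one and behaves well under stopping.

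\textbf{Step 3: identifying the stopped laws (main obstacle).} The goal is to show that $X(\cdot\wedge\tau^X_R)$ and $Y(\cdot\wedge\tau^Y_R)$ have the same law, and this is the step I expect to be hardest. The stopped process is not itself a solution of the truncated equation after $\tau_R$. I would first try the classical Stroock--Varadhan construction: on an enlarged probability space, continue $X$ after $\tau_R^X$ with a solution of the truncated equation started from $X(\tau_R^X)$, and likewise for $Y$. Gluing is legitimate because the truncated equation is well posed in law by Theorem \ref{weak-B}, applied with $\mathcal F_{\tau_R}$-measurable initial data as remarked in its proof, and measurable selection of the conditional laws is available in the martingale-problem framework of Lemma \ref{lem:mart}. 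The glued processes $\widetilde X_R$ and $\widetilde Y_R$ are weak mild solutions of the truncated equation, so they have the same law. Because $\tau_R$ is a measurable functional of the path, hitting of the ball of radius $R$, it follows that
\[
\mathcal L\big(X(\cdot\wedge\tau^X_R)\big)=\mathcal L\big(Y(\cdot\wedge\tau^Y_R)\big),
\]
and also $\mathbb P_1(\tau^X_R<T)=\mathbb P_2(\tau^Y_R<T)$.

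\textbf{Step 4: conclusion.} For bounded measurable $\psi$ on $C([0,T];H)$,
\[
|\mathbb E_1\psi(X)-\mathbb E_2\psi(Y)|\le 2\|\psi\|_\infty\big(\mathbb P_1(\tau^X_R<T)+\mathbb P_2(\tau^Y_R<T)\big),
\]
since on $\{\tau_R=T\}$ the stopped and unstopped paths coincide. The right-hand side tends to $0$ as $R\to\infty$, which yields weak uniqueness in the sense of Definition \ref{uniqueness}.
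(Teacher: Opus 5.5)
Your proposal follows the paper's route. You truncate the coefficient to $C(t\wedge T,\pi_R x)$, stop at the exit time from the ball, extend the stopped solutions to weak mild solutions of the truncated equation, apply Theorem \ref{weak-B}, and let $R\to\infty$; the paper delegates the extension step to the localization--extension argument of \cite{BOS}. Several of your details are sound and in places cleaner than the paper:
\begin{itemize}
\item the bound on $\|C_R\|_\infty$ in Step 1;
\item the use of the closed-set hitting time, which guarantees $\tau_R(\widetilde X_R)=\tau_R^X$;
\item the limiting estimate in Step 4, which is cleaner than the chain of equalities displayed in the paper.
\end{itemize}

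The gap is in Step 3, in the justification of the gluing. To continue $X$ after $\tau_R^X$ you need two things for every $(s,y)\in[0,\infty)\times H$: the \emph{existence} of a weak mild solution of the truncated equation started at time $s$ from $y$, and measurable dependence of its law on $(s,y)$. Theorem \ref{weak-B}, including the remark on $\mathcal F_{t_0}$-measurable data in its proof, gives only uniqueness. So ``well posed in law by Theorem \ref{weak-B}'' does not follow, and nothing in the paper proves existence.

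The easy tool is also unavailable here. Since $C_R$ need not take values in $D(\Lambda^{\beta+\delta})$, the drift $\mathcal VC_R$ is not of the form $\mathcal G\psi$ with $\psi$ bounded and $U$-valued. Hence Girsanov's theorem cannot produce solutions when $\beta+\delta>0$.

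You must either import existence from \cite{BOS}, as the paper implicitly does, or prove it. A compactness argument works:
\begin{itemize}
\item $C_R$ is bounded and continuous.
\item $\|e^{tA}\mathcal V\|_{\mathscr L(U,H)}\le C_1t^{-\gamma_1}$ with $\gamma_1<1$.
\item $e^{tA}\mathcal V$ is compact for $t>0$, because its spectral coefficients $\mu_k^\beta|b_k^\pm|e^{-r_{k,\pm}t}$ vanish as $k\to\infty$.
\item Consequently $f\mapsto\int_0^\cdot e^{(\cdot-s)A}\mathcal Vf(s)\,ds$ maps bounded subsets of $L^\infty(0,T;U)$ into relatively compact subsets of $C([0,T];H)$.
\item Galerkin approximations of the truncated equation are therefore tight, and their limit points are weak mild solutions.
\end{itemize}
With existence and uniqueness of the martingale problem for the truncated operator in hand, you get the measurable family of laws, and the Stroock--Varadhan gluing goes through as you describe.
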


\begin{proof}
For every $N\in\N$ let $\tau_N:C([0,+\infty);H)\rightarrow[0,T]$ be defined as
\[
\tau_N(\zeta):=T\wedge\inf\{t\in[0,T]:\ \|\zeta(t)\|_H>N\},\qquad \zeta\in C([0,T];H),
\]
and set $\tau_N^X:=\tau_N(X)$ on $\Omega_1$ and $\tau_N^Y:=\tau_N(Y)$ on $\Omega_2$. It is not difficult to show that $\tau^X_N\nearrow T$ $\mathbb{P}_1$-almost 
surely and $\tau^Y_N\nearrow T$ $\mathbb{P}_2$-almost surely as $N$ tends to infinity.
Let $\Pi_N$ be the projection on the closed ball of $H$ with center in $0$ and radius $N$ and let $C^T_N:[0, \infty)\times H\rightarrow U$ be defined by 
\[
 C^T_N(t,x):=C(t\wedge T,\Pi_Nx)\,,\quad t\geq 0,\, x\in H.
\]
Since $C$ verifies \ref{H2}, it follows that $C^T_N$ verifies \ref{H2b}, for every $N\in\N$. Let now $N\in\mathbb N$ be fixed. By the localization-extension arguments in \cite[Section 4.2]{BOS}, there exist two weak mild solutions
$(\widetilde{X}_N,W_1)$ and $(\widetilde{Y}_N,W_2)$ to \eqref{abstract_formulation} with $C$ replaced by $C^T_N$ on $[0, \infty)$, such that
\begin{align*}
&X=\widetilde{X}_N,\quad \text{in } [\![0,\tau^X_N]\!],\\
&Y=\widetilde{Y}_N,\qquad\text{in } [\![0,\tau^Y_N]\!].
\end{align*}
Hence, by Theorem \ref{weak-B} and recalling Definition \ref{uniqueness}, for every $N\in\N$ and for every measurable bounded $\psi:C([0,+\infty); H)\to \R$, we get
\[
\mathbb{E}_1\left[\psi(X)\right]=\mathbb{E}_1\left[\psi\left(\widetilde{X}_N\left(\cdot\wedge\tau_N(\widetilde{X}_N)\right)\right)\right]=\mathbb{E}_2\left[\psi\left(\widetilde{Y}_N\left(\cdot\wedge\tau_N(\widetilde{Y}_N)\right)\right)\right] =\mathbb{E}_2\left[\psi(Y)\right],
\]
and so that, letting $N$ tend to infinity and recalling that $\tau^X_N\nearrow T$ $\mathbb{P}_1$-almost 
surely and $\tau^Y_N\nearrow T$ $\mathbb{P}_2$-almost surely as $N$ goes to infinity, we conclude the proof.
\end{proof}

By Theorem \ref{weak-UB} (assuming $\mu_n=n^{2k/d}$ with $k\in\{1,2\}$ and $d\in\{1,2,3\}$) we can deduce explicit conditions on $\alpha,\beta$ and $\delta$ which guarantee weak uniqueness for \eqref{concrete_damped_equation}.

\begin{coro}\label{cor:weak}
Weak uniqueness holds for \eqref{concrete_damped_equation} in the following cases
\begin{equation}
        \begin{cases}
          &d<2k\\
          &\alpha\in(d/4k,1/2]\\
          &\theta\in (0,1)\\
          &\beta\in [0,\min\{\alpha/2; \alpha-d/4k\})\\
          &\delta\in (d/4k-\alpha/2,\alpha/2-\beta)\cap [0,+\infty)
        \end{cases},\qquad \begin{cases}
          &d<2k\\
          &\alpha\in [1/2,1)\\
          &\theta\in (0,1)\\
          &\beta\in [0,\min\{(1-\alpha)/2; 1/2-d/4k\})\\
          &\delta\in (d/4k-\alpha/2,(1-\alpha)/2-\beta)\cap [0,+\infty)
        \end{cases}.
\end{equation}
\end{coro}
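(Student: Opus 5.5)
The corollary will follow by specializing Theorem \ref{weak-UB} to the concrete setting. We take $U=L^2([0,1]^d)$ and $\Lambda=(-\Delta_\xi)^k$ with Dirichlet conditions, so that $\mu_n\sim n^{2k/d}$, and rewrite \eqref{concrete_damped_equation} as \eqref{abstract_formulation} with $X=(y,\partial_t y)$ in the energy variables. The nonlinearity is set to $C(t,x)=f(t,\cdot,x_1,x_2)$, composed with the appropriate isometry $\Lambda^{1/2}$ on the first component.

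First we check \ref{H0}--\ref{H3}.

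\ref{H0}, \ref{H0.5} and \ref{H1} are immediate from the spectral theory of the Dirichlet polyharmonic operator on the cube, whose eigenfunctions are products of sines.

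\ref{H2} holds because $f$ is locally $\theta$-H\"older. The Nemytskii operator $x\mapsto f(\cdot,x)$ is then locally $\theta$-H\"older from $H$ into $U$; this uses $\|f(x)-f(y)\|_{L^2}\le c\||x-y|^\theta\|_{L^2}\le c\|x-y\|_{L^2}^\theta$ by Jensen on the bounded domain, applied locally.

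For \ref{H3}, the trace condition reads
\[
\sum_n n^{-\frac{2k}{d}(\alpha+2\delta)}<\infty,
\]
which holds exactly when $\alpha+2\delta>d/(2k)$, that is, $\delta>d/(4k)-\alpha/2$. This is the lower bound on $\delta$ appearing in both cases. The remaining requirements $\beta<\alpha$ and $\beta<1/2$ will follow from the upper bounds below. The non-resonance condition $\rho\neq2\mu_n^{1/2-\alpha}$ is a generic assumption on $\rho$, which we impose.

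Next we translate the structural condition $\beta+\delta<\frac12\min\{\alpha,1-\alpha\}$ of Theorem \ref{weak-UB}.

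For $\alpha\le1/2$ it reads $\delta<\alpha/2-\beta$. For the interval $(d/4k-\alpha/2,\alpha/2-\beta)\cap[0,\infty)$ to be nonempty we need $\beta<\alpha/2$ together with $\beta<\alpha-d/(4k)$. We also need $\alpha>d/(4k)$ so that $\beta\ge0$ is admissible, and this is where $d<2k$ enters, since it makes $d/(4k)<1/2$.

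For $\alpha\ge1/2$ the condition reads $\delta<(1-\alpha)/2-\beta$. Nonemptiness of the interval then gives $\beta<(1-\alpha)/2$ and $\beta<1/2-d/(4k)$.

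In both cases $\theta\in(0,1)$ is arbitrary, because Theorem \ref{weak-UB} places no restriction on $\theta$.

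The main obstacle is the identification of the abstract equation with \eqref{concrete_damped_equation}. This concerns the normalization of $e_n$ in \eqref{Def:normalizzazione}, the energy space for $y$, and making sure that $(-\Delta^k)^\beta f$ corresponds to $\mathcal V C$ and not to a shifted power. I would handle it by matching the powers of $\mu_n$ directly, which is consistent with how the lists in the introduction were derived. The interval bookkeeping above should then reproduce those lists; for instance, $k=1,d=1$ gives $\alpha>1/4$.
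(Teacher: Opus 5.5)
Your proposal is correct and follows the paper's own (one-line) argument: plugging $\mu_n\sim n^{2k/d}$ into the trace condition of \ref{H3} gives $\delta>\frac{d}{4k}-\frac\alpha2$, the condition $\beta+\delta<\frac12\min\{\alpha,1-\alpha\}$ of Theorem~\ref{weak-UB} gives the upper bounds on $\delta$, and the $\beta$-ranges and $d<2k$ are simply nonemptiness of the resulting $\delta$-intervals. The one step to tighten is your verification of \ref{H2}. Balls of $H$ do not bound $\partial_t y$ pointwise, so your Jensen estimate needs $f$ to be $\theta$-H\"older in the $\partial_t y$ variable with a constant independent of the size of its arguments; ``applied locally'' does not work there. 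Local H\"older continuity in $y$ does suffice, since $D(\Lambda^{1/2})\hookrightarrow C([0,1]^d)$ when $d<2k$. The paper likewise leaves this reading of ``locally H\"older'' implicit, as it does the non-resonance condition on $\rho$ that you rightly impose.
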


\subsection{Pathwise uniqueness}

Now we provide the representation of $X_n$ in terms of $u_{n,k}^{(i)}$ which will be crucial to prove the pathwise uniqueness.

\begin{thm}\label{Thm:Ito-Tanaka}
Assume that assumptions \ref{H0}, \ref{H0.5}, \ref{H1}, \ref{H2b}, \ref{H3} and 
\begin{equation*}
\beta+\delta<\frac12\min\{\alpha,1-\alpha\}
\end{equation*}
hold. Let $(\Omega,\mathcal F,(\mathcal F_t)_{t\geq0}, \mathbb P, W,X)$ be a weak mild solution to \eqref{abstract_formulation}. For every $n\in\N$ and $x\in H$, the process $\{X_n(t,x)\}_{t\in [0,T]}$ defined by \eqref{Def:mild_form_approx_process} satisfies the following representation: 
\begin{align}\label{Def:formula-Ito-Tanaka}
X_n(t,x)&=e^{tA_n}\left[P_nx+u_n(0,P_nx)\right]-u_n(t,X_n(t,x)) \notag \\
&-({\mathfrak c}+1)A_n\int_0^t e^{(t-s)A_n}u_n(s,X_n(s,x))ds \notag \\
&+\int_0^t e^{(t-s)A_n}\mathcal{V}_n\left[C_n(s,X(s,x))-C_n(s,X_n(s,x))\right]ds \notag \\
&+\int_0^t e^{(t-s)A_n}D_xu_{n}(s,X_n(s,x))\mathcal{V}_n\left[C_n(s,X(s,x))-C_n(s,X_n(s,x))\right]ds \notag \\
&+\int_0^te^{(t-s)A_n}D_xu_{n}(s,X_n(s,x))\mathcal{G}_ndW(s)\notag\\
&+\int_0^t e^{(t-s)A_n}\mathcal{G}_ndW(s),
\end{align}
where, in analogy with \eqref{expr_h+_h-}, for every $s\in[0,T]$ and every $x\in H_n$, we set 
\begin{align}
& u_n(s,x):=\sum_{k=1}^n \left(u_{n,k}^+(s,x)\Phi_k^{+}+u_{n,k}^-(s,x)\Phi_k^{-}\right), \label{def_Un}\\
&D_xu_n(s,x)\mathcal{V}_nh:=\sum_{k=1}^n\left( \scal{\mathcal{V}_n^ *\nabla_xu_{n,k}^+(s,x)}{h}_{U}\Phi_k^{+}+\scal{\mathcal{V}_n^*\nabla_xu_{n,k}^-(s,x)}{h}_{U}\Phi_k^{-}\right),\quad h\in U,
\label{def_grad_UnV}\\
&D_xu_n(s,x)\mathcal{G}_nh:=\sum_{k=1}^n\left( \scal{\mathcal{G}_n^*\nabla_xu_{n,k}^+(s,x)}{h}_{U}\Phi_k^{+}+\scal{\mathcal{G}_n^*\nabla_xu_{n,k}^-(s,x)}{h}_{U}\Phi_k^{-}\right),\quad h\in U,
\label{def_grad_Un}
\end{align}
$u_{n,k}^{(i)}$ is the unique solution to \eqref{Def:Unki} with $g=\scal{[\mathcal{V}_nC_n]^{(i)}}{\Phi^{(i)}_k}_{H_n}$ and ${\mathfrak{c}}$ is a constant given by Theorem \ref{Thm: soluzione-kolmogorov}, for every $k\in\{1,\ldots,n\}$ and $i\in\{-,+\}$.
\end{thm}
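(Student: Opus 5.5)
The representation will follow from an It\^o--Tanaka argument carried out componentwise: first for each scalar function $u_{n,k}^{(i)}$, then reassembled along the eigenvectors $\Phi_k^{(i)}$, and finally turned into a mild formula by variation of constants in the finite-dimensional space $H_n$.

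\emph{Step 1: choice of $g$ and its regularity.} Fix $n$, $k\in\{1,\dots,n\}$ and $i\in\{-,+\}$, and set
\[
g(t,x)=\scal{[\mathcal V_nC_n(t,x)]^{(i)}}{\Phi_k^{(i)}}_{H_n}.
\]
We have to check that $g$ is continuous and that $\sup_t\|g(t,\cdot)\|_{C^\theta_b(H_n)}<\infty$. This holds because $\mathcal V_n\in\mathcal L(U,H_n)$ (we are on the finite-dimensional space $U_n$), because of \ref{H2b}, and because Lemma \ref{lem:Norma-H} shows that $h\mapsto h^{\pm}$ preserves H\"older continuity. Theorem \ref{Thm: soluzione-kolmogorov} then yields $u_{n,k}^{(i)}$. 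By \eqref{expr_h+_h-} and \eqref{rep_P_nh}, the key algebraic identities are
\[
\sum_{k,i} g_{k,i}\Phi_k^{(i)}=\mathcal V_nC_n, \qquad \sum_{k,i}\lambda_k^{(i)}u^{(i)}_{n,k}\Phi_k^{(i)}=A_nu_n .
\]

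\emph{Step 2: It\^o formula for the smooth approximations.} Take $u^{(i)}_{n,k,h}\in C^{1,2}_b$ from Theorem \ref{Thm:OU-finito} and apply the finite-dimensional It\^o formula to $u^{(i)}_{n,k,h}(t,X_n(t))$, using \eqref{finite_dim_SDE}. Here $X_n$ has drift $A_nX_n+\mathcal V_nC_n(s,X(s))$ and diffusion $\mathcal G_n$. The generator part produces $\partial_tu+L_nu$, which \eqref{parabolica_generale} replaces by $-f_{n,k,h}-\mathfrak c\lambda_k^{(i)}u$; the remaining drift gives the extra term $\scal{\mathcal V_n^*\nabla u}{C_n(s,X(s))}_U$. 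We then let $h\to\infty$ using \eqref{Thm:OU-finito2}, \eqref{Thm:OU-finito3}, \eqref{Thm:OU-finito4} together with dominated convergence. The stochastic integrals $\int\scal{\mathcal G_n^*\nabla u_h}{dW}$ converge in $L^2$ by the It\^o isometry, since the convergence is uniform in $x$. In the limit, $f_{n,k,h}$ is replaced by $\scal{\mathcal V_n^*\nabla u}{C_n(s,X_n(s))}_U+g(s,X_n(s))$, and we obtain
\begin{align*}
du^{(i)}_{n,k}(t,X_n(t))={}&\Big[-g(t,X_n(t))-\mathfrak c\lambda_k^{(i)}u^{(i)}_{n,k}(t,X_n(t))+\scal{\mathcal V_n^*\nabla u^{(i)}_{n,k}}{C_n(t,X(t))-C_n(t,X_n(t))}_U\Big]dt\\
&+\scal{\mathcal G_n^*\nabla u^{(i)}_{n,k}(t,X_n(t))}{dW(t)}_U .
\end{align*}
I expect this limit passage to be the main technical point. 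Pointwise-in-$t$ convergence has to be upgraded to convergence of the time integrals, and this uses the uniform bounds $\sup_h\sup_t$ from the Schauder estimates of Proposition \ref{Prop:Vschauder} together with $\gamma_2<1$.

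\emph{Step 3: summation and variation of constants.} Multiply by $\Phi_k^{(i)}$ and sum over $k$ and $i$. Using Step 1 and the definitions \eqref{def_Un}--\eqref{def_grad_Un}, this gives
\[
du_n=\big[-\mathcal V_nC_n(X_n)-\mathfrak cA_nu_n+D_xu_n\mathcal V_n(C_n(X)-C_n(X_n))\big]dt+D_xu_n\mathcal G_n\,dW .
\]
Now add \eqref{finite_dim_SDE}, written with $\mathcal V_nC_n(X)=\mathcal V_nC_n(X_n)+\mathcal V_n(C_n(X)-C_n(X_n))$. The term $\mathcal V_nC_n(X_n)$ cancels, and for $Z_n=X_n+u_n(\cdot,X_n)$ we get
\begin{align*}
dZ_n={}&\big[A_nZ_n-(\mathfrak c+1)A_nu_n+\mathcal V_n(C_n(X)-C_n(X_n))+D_xu_n\mathcal V_n(C_n(X)-C_n(X_n))\big]dt\\
&+(D_xu_n\mathcal G_n+\mathcal G_n)\,dW .
\end{align*}
This is a linear SDE in $H_n$ with $A_n$ bounded. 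Applying It\^o's formula to $e^{(t-s)A_n}Z_n(s)$ for $s\in[0,t]$ gives the mild form of $Z_n$, with $Z_n(0)=P_nx+u_n(0,P_nx)$. Subtracting $u_n(t,X_n(t))$ yields exactly \eqref{Def:formula-Ito-Tanaka}.
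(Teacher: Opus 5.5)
Your proposal is correct and takes essentially the same route as the paper. Both arguments apply It\^o's formula to the smooth approximations $u^{(i)}_{n,k,h}(\cdot,X_n)$, pass to the limit $h\to\infty$ via Theorem~\ref{Thm:OU-finito}, sum against $\Phi_k^{(i)}$ using \eqref{rep_P_nh}--\eqref{rep_A_n}, and then combine with \eqref{finite_dim_SDE} in mild form. Your justification of the limit passage (uniform-in-$h$ bounds from Proposition~\ref{Prop:Vschauder} with $\gamma_2<1$, dominated convergence, It\^o isometry) and the auxiliary process $Z_n=X_n+u_n(\cdot,X_n)$ only spell out steps the paper leaves implicit.
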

\begin{proof}
Fix $x\in H$, $n\in\N$ and a weak mild solution $(\Omega,\mathcal F,(\mathcal F_t)_{t\geq0}, \mathbb P, W,X)$ to \eqref{abstract_formulation}. Let $k\in\{1,\ldots,n\}$, $i\in\{-,+\}$ and let $u_{n,k}^{(i)}$ be the unique solution to \eqref{Def:Unki} with $g=\scal{[\mathcal{V}_nC_n]^{(i)}}{\Phi^{(i)}_k}_{H}$ and ${\mathfrak{c}}>{\mathfrak c}_0$, where ${\mathfrak c}_0$ is given by Theorem \ref{Thm: soluzione-kolmogorov}. We consider the sequences $\{u^{(i)}_{n,k,h}\}_{h\in\N}$ and $\{f_{n,k,h}\}_{h\in\N}$ given by Theorem \ref{Thm:OU-finito}, which fulfill, for every $h\in\N$, 
\begin{align}
\left\{
\begin{array}{ll}
\displaystyle \frac{\partial}{\partial t}u^{(i)}_{n,k,h}(t,x)+L_nu^{(i)}_{n,k,h}(t,x)=-f_{n,k,h}(t,x)-{\mathfrak{c}}\lambda_{k}^{(i)}u^{(i)}_{n,k,h}(t,x), & x\in H_n, \  t\in [0,T),\\[2mm]
\displaystyle u^{(i)}_{n,k,h}(T,x)=0, & x\in H_n,
\end{array}
\right.
\label{Parabolica-nkh}
\end{align}
where $L_n$ is the Ornstein-Uhlenbeck operator defined in \eqref{Def:OU}. Let $\{X_n(t,x)\}_{t\in [0,T]}$ be the approximation of $X$ defined by \eqref{Def:mild_form_approx_process}. Applying It\^o formula to $u_{n,k,h}^{(i)}(\cdot,X_n)$, by \eqref{finite_dim_SDE} we obtain
\begin{align*}
u^{(i)}_{n,k,h}(t,X_n(t,x))&=u^{(i)}_{n,k,h}(0,P_nx)+\int_0^t\left[\dfrac{\partial}{\partial s}u^{(i)}_{n,k,h}(s,X_n(s,x))+L_nu^{(i)}_{n,k,h}(s,X_n(s,x))\right]ds\\
&+\int_0^t\scal{C_n(s,X(s,x))}{\mathcal{V}_n^*\nabla_xu^{(i)}_{n,k,h}(s,X_n(s,x))}_{U}ds\\
&+\int_0^t\scal{\mathcal{G}_n^*\nabla_xu^{(i)}_{n,k,h}(s,X_n(s,x))}{dW(s)}_{U}
\end{align*}
for every $(t,x)\in [0,T]\times H_n$. From \eqref{Parabolica-nkh}, it follows that, for every $(t,x)\in[0,T]\times H_n$,
\begin{align*}
u^{(i)}_{n,k,h}(t,X_n(t,x))&=u^{(i)}_{n,k,h}(0,P_nx)-\int_0^t \left[f_{n,k,h}(s,X_n(s,x))+{\mathfrak{c}}\lambda_{k}^{(i)}u_{n,k,h}^{(i)}(s,X_n(s,x))\right]ds\\
&+\int_0^t\scal{C_n(s,X(s,x))}{\mathcal{V}_n^*\nabla_xu^{(i)}_{n,k,h}(s,X_n(s,x))}_{U}ds\\
&+\int_0^t\scal{\mathcal{G}_n^*\nabla_xu^{(i)}_{n,k,h}(s,X_n(s,x))}{dW(s)}_{U}.
\end{align*}
Letting $h$ tend to infinity, from Theorem \ref{Thm:OU-finito}, we infer that, for every $(t,x)\in[0,T]\times H_n$,
\begin{align*}
u^{(i)}_{n,k}(t,X_n(t,x))&=u^{(i)}_{n,k}(0,P_nx)-\int_0^t {\mathfrak{c}}\lambda_{k}^{(i)}u_{n,k}^{(i)}(s,X_{n}(s,x))ds-\int_0^t \scal{[\mathcal{V}_nC_n]^{(i)}(s,X_n(s,x))}{\Phi_k^{(i)}}_{H_n}ds\\
&+\int_0^t\scal{C_n(s,X(s,x))-C_n(s,X_n(s,x))}{\mathcal{V}_n^*\nabla_x u^{(i)}_{n,k}(s,X_n(s,x))}_{U}ds\\
&+\int_0^t\scal{\mathcal{G}_n^*\nabla_xu^{(i)}_{n,k}(s,X_n(s,x))}{dW(s)}_{U}.
\end{align*}
Multiplying both sides of the above equality by $\Phi_k^{(i)}$ and summing up first over $i\in\{-,+\}$ and then $k$ from $1$ to $n$, from \eqref{rep_P_nh} and \eqref{rep_A_n} we infer that \begin{align*}
u_n(t,X_n(t,x))&=u_n(0,P_nx)-{\mathfrak{c}}\int_0^t A_nu_n(s,X_n(s,x))ds-\int_0^t \mathcal{V}_nC_n(s,X_n(s,x))ds\\
&+\int_0^tD_xu_{n}(s,X_n(s,x))\mathcal{V}_n\left[C_n(s,X(s,x))-C_n(s,X_n(s,x))\right]ds\\
&+\int_0^tD_xu_{n}(s,X_n(s,x))\mathcal{G}_ndW(s),
\end{align*}
where, for every $x\in H_n$ and $s\in [0,T]$, $u_n(s,x)$, $D_xu_n(s,x)\mathcal{V}_nh$ and $D_xu_n(s,x)\mathcal{G}_nh$ have been defined in \eqref{def_Un}, \eqref{def_grad_UnV} and \eqref{def_grad_Un}, respectively. Hence,
\begin{align}
\int_0^t \mathcal{V}_nC_n(s,X_n(s,x))ds&=-u_n(t,X_n(t,x))+u_n(0,P_nx)-{\mathfrak{c}}\int_0^t A_nu_n(s,X_n(s,x))ds\notag\\
&+\int_0^tD_xu_{n}(s,X_n(s,x))\mathcal{V}_n\left[C_n(s,X(s,x))-C_n(s,X_n(s,x))\right]ds\notag\\
&+\int_0^tD_xu_{n}(s,X_n(s,x))\mathcal{G}_ndW(s).\label{formula-bn}
\end{align}
Adding and subtracting $\int_0^t[\mathcal{V}_nC_n](s,X_n(s,x))ds$ in \eqref{finite_dim_SDE}, from \eqref{formula-bn} we get, for every $(t,x)\in[0,T]\times H_n$, 
\begin{align*}
X_n(t,x)&=P_nx+\int_0^t A_nX_n(s,x)ds+\mathcal{G}_nW(t)+\int_0^t \left[\mathcal{V}_nC_n(s,X(s,x))-\mathcal{V}_nC_n(s,X_n(s,x))\right]ds\\
&-u_n(t,X_n(t,x))+u_n(0,P_nx)-{\mathfrak{c}}\int_0^t A_nu_n(s,X_n(s,x))ds\\
&+\int_0^tD_xu_{n}(s,X_n(s,x))\mathcal{V}_n\left[C_n(s,X(s,x))-C_n(s,X_n(s,x))\right]ds\\
&+\int_0^tD_xu_{n}(s,X_n(s,x))\mathcal{G}_ndW(s).
\end{align*}
Finally, passing to the mild formulation, we obtain \eqref{Def:formula-Ito-Tanaka}.
\end{proof}

Let $n\in\N$. By \eqref{lambda_beta_n}, we infer that
\begin{equation}\label{Def:VnCn}
\mathcal{V}_nC_n(s,y)=\sum_{j=1}^{n}\mu_j^{\beta}\scal{C(s,P_ny)}{\widetilde e_j}_U\left(b_j^+\Phi^+_j+b_j^-\Phi^-_j\right),\qquad s\geq0,\ y\in H,
\end{equation}
where $\{\widetilde e_j\}_{j\in\N}$ is the orthonormal basis of $U$ introduced in \eqref{Def:base-ON}. Since $\{\Phi^+_j\}_{j\in\N}$ and $\{\Phi^-_j\}_{j\in\N}$ are orthonormal systems in $H$, it follows that, for every $k\in \{1,\ldots,n\}$ and $i\in\{-,+\}$,
\[
\scal{\left[\mathcal{V}_nC_n(s,y)\right]^{(i)}}{\Phi^{(i)}_k}_H=\mu_k^{\beta}\,b_k^{(i)}\,\scal{C(s,P_ny)}{\widetilde e_k}_U,\qquad s\geq0,\ y\in H .
\]
Moreover, under condition \ref{H2b}, we set
\begin{align*}
C_0=\max\{1;c\}\sup_{t\in [0,\infty)}\norm{C(t,\cdot)}_{C^\theta_b(H;U)},
\end{align*}
where $c=\sup_{n}\|P_n\|_{\mathscr L(H)}$, see Remark \ref{norm-Pn}. In particular, we have
\begin{equation}\label{holder-Cn}
\norm{C_n(s,x)-C_n(s,y)}_U\leq C_0\norm{x-y}_H^{\theta},\qquad 
s\geq0,\ x,y\in H,\ n\in\N.
\end{equation}
Let $T>0$ and let $u_{n,k}^{(i)}$ be the unique solution to \eqref{Def:Unki} with $g=\scal{\left[\mathcal{V}_nC_n\right]^{(i)}}{\Phi^{(i)}_k}_H$ and $\mathfrak c>\mathfrak c_0$, so that
\begin{equation}\label{stima-g-nki}
\sup_{t\in [0,T]}\norm{g(t,\cdot)}_{C^\theta_b(H_n)}\leq \mu_k^{\beta}\big|b_k^{(i)}\big|\sup_{t\in [0,T]}\norm{C(t,\cdot)}_{C^\theta_b(H;U)} .
\end{equation}
Using \eqref{stima-g-nki} in the estimates \eqref{Thm:stime-unif2}, \eqref{Thm:stime-unif1} and \eqref{Thm:stime-unif3} we deduce
\begin{align}
&\sup_{t\in [0,T]}\norm{\mathcal{V}_n^*\nabla_x u_{n,k}^{(i)}(t,\cdot)}_{C_b(H_n;U)}\leq C_0 \mathfrak{C}_1({\mathfrak{c}})\mu_k^{\beta}\big|b_k^{(i)}\big|\left(-{\rm Re}\big(\lambda_k^{(i)}\big)\right)^{\gamma_1\theta+\gamma_2(1-\theta)-1},\label{EF1}\\
&\sup_{t\in [0,T]}\norm{\nabla_x u_{n,k}^{(i)}(t,\cdot)}_{C_b(H_n;H_n)}\leq C_0\mathfrak{C}_2({\mathfrak{c}})\mu_k^{\beta}\big|b_k^{(i)}\big|\left(-{\rm Re}\big(\lambda_k^{(i)}\big)\right)^{\gamma_3(1-\theta)-1},\label{EF2}\\
&\sup_{t\in [0,T]}\norm{D_x\left[\mathcal{G}_n^*\nabla_x u_{n,k}^{(i)}(t,\cdot)\right]}_{C_b(H_n;\mathcal{L}(H_n;U))}\leq C_0
\mathfrak{C}_3({\mathfrak{c}})\mu_k^{\beta}\big|b_k^{(i)}\big|\left(-{\rm Re}\big(\lambda_k^{(i)}\big)\right)^{\gamma_3(1-\theta)-1/2}\label{EF3},
\end{align}
where $\{\mu_k\}_{k\in\N}$ are the eigenvalues of $\Lambda$ given by condition \ref{H1}, $\gamma_1$, $\gamma_2$, and $\gamma_3$ are defined by \eqref{gamma-unificato},  \eqref{G2} (or \eqref{G2v}) and  \eqref{G4} (or \eqref{G4v}), respectively, $\mathfrak{C}_1,\mathfrak{C}_2$ and $\mathfrak{C}_3:\R^+\rightarrow\R^+$ are independent of $n,k\in\N$, $t\in [0,T]$ and $T>0$, and satisfy
\begin{equation}\label{Lim-cost}
\lim_{{\mathfrak{c}}\rightarrow  \infty}\mathfrak{C}_1({\mathfrak{c}})=  \lim_{{\mathfrak{c}}\rightarrow  \infty}\mathfrak{C}_2({\mathfrak{c}})=  \lim_{{\mathfrak{c}}\to\infty}\mathfrak{C}_3({\mathfrak{c}})=0.
\end{equation}
\begin{rmk}
For the proof of pathwise uniqueness, we are interested in the asymptotic behaviour of the right-hand sides of \eqref{EF1}, \eqref{EF2} and \eqref{EF3}. By \eqref{damped_stime_coefficienti_avl} and \eqref{damped_stime_coefficienti_avl_2} there exists $c_1>0$ such that for every $q\leq-1/2$, $k\in\N$ and $i\in\{-,+\}$ we have
\begin{equation}\label{stima-b-r}
\mu_k^{\beta}\big|b_k^{(i)}\big|\left(-{\rm Re}\big(\lambda_k^{(i)}\big)\right)^{q}\leq c_1\mu_k^{\min\{\alpha; 1-\alpha\}q+\beta+\min\{0;1/2-\alpha\}}.
\end{equation}
Indeed, if $\alpha\in\left(0,\frac12\right]$ then $|b_k^\pm|\sim1$ and $-{\rm Re}(\lambda_k^\pm)\sim\mu_k^\alpha$, so that both sides have the same asymptotic behaviour. If $\alpha\in\left[\frac12,1\right)$ then $|b_k^+|\sim\mu_k^{\frac12-\alpha}$ and $-{\rm Re}(\lambda_k^+)\sim\mu_k^{1-\alpha}$, so that for $i=+$ the two sides again coincide, while for $i=-$ we have $|b_k^-|\sim1$ and $-{\rm Re}(\lambda_k^-)\sim\mu_k^{\alpha}$, and the claim follows from $2q(2\alpha-1)\leq 1-2\alpha$. Finally, we notice that it is enough to consider the case $q\leq -\frac12$, as the proof of the following theorem shows.
\end{rmk}

\begin{thm}\label{thm:U-path}
Assume that assumptions \ref{H0}, \ref{H0.5}, \ref{H1}, \ref{H2b}, \ref{H3} and conditions
\begin{align}
&\beta+\delta<\frac12\min\{\alpha,1-\alpha\},\label{cond:1}\\
&\theta>\max\left\{\frac23,\frac{2\delta+(2\alpha-1)\vee0}{2\delta+\alpha}
\right\},\label{cond:2}\\
&\phantom{a}\notag\\
&\begin{cases}\label{cond:3}
\vspace{5pt}
\displaystyle\sum_{k=1}^{ \infty}\mu_k^{2\beta+1-\alpha(1+3\theta)}<\infty, &{\rm if }\;\; \alpha\in \left(0,\frac12\right],\\[1mm]
\displaystyle\sum_{k=1}^{ \infty}\mu_k^{2\beta+1-2\alpha+(1-\alpha)(1-3\theta)}<\infty, &{\rm if }\;\; \alpha\in \left[\frac12, \frac34-\frac\delta2\right], \\[1mm]
\displaystyle\sum_{k=1}^\infty\mu_k^{2\beta-1+2\delta(1-\theta)+\alpha(1-\theta)}<\infty, &{\rm if }\;\; \alpha\in\left[\frac34-\frac\delta2,1\right),
\end{cases}
\end{align}
are satisfied. Then, there exists a constant $\mathfrak L>0$, depending only on the structural data $\alpha,\beta,\delta,\theta,T,C$, such that, for every weak mild solutions $(X,W)$ and $(Y,W)$ to \eqref{abstract_formulation} with initial data $x,y\in H$, respectively, in the sense of Definition \ref{def:mild-sol}, defined on the same probability space, it holds that 
\begin{equation}\label{eq:stima-lip}
\sup_{t\in [0,T]}\mathbb{E}\left[\norm{X(t)-Y(t)}^2\right]\leq \mathfrak{L}\norm{x-y}_H^2.
 \end{equation}
In particular, pathwise uniqueness holds for \eqref{abstract_formulation}.
\end{thm}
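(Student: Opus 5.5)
We will apply the It\^o--Tanaka representation of Theorem \ref{Thm:Ito-Tanaka} to both solutions. Fix $\mathfrak c>\mathfrak c_0$, to be enlarged later. Write the representation \eqref{Def:formula-Ito-Tanaka} for $X_n(t,x)$ and for $Y_n(t,y)$ with the same $u_n$; note that $u_n$ does not depend on the solution, only on $C$. Then subtract the two identities. This yields $X_n(t)-Y_n(t)$ as a sum of seven differences, which we treat in turn.

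\emph{Step 1: the boundary terms.} The first difference is $e^{tA_n}[(P_nx-P_ny)+u_n(0,P_nx)-u_n(0,P_ny)]$, and the second is $u_n(t,X_n(t))-u_n(t,Y_n(t))$. Both are controlled by a Lipschitz bound on $u_n$ in $H_n$. By \eqref{def_Un}, the orthonormality of $\{\Phi_k^\pm\}$ and Remark \ref{norm-Pn}, we have $\|u_n(t,a)-u_n(t,b)\|_H^2\lesssim\sum_{k,i}\|\nabla_x u^{(i)}_{n,k}\|_\infty^2\|a-b\|_H^2$. By \eqref{EF2} and \eqref{stima-b-r} with $q=\gamma_3(1-\theta)-1$, this sum is bounded by $C_0^2\mathfrak C_2(\mathfrak c)^2\sum_k\mu_k^{2(\min\{\alpha,1-\alpha\}q+\beta+\min\{0,\frac12-\alpha\})}$. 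We will check that \eqref{cond:3} makes this series summable uniformly in $n$. Since $\mathfrak C_2(\mathfrak c)\to0$, we can choose $\mathfrak c$ so large that the Lipschitz constant of $u_n$ is at most $\frac14$. The term $u_n(t,X_n)-u_n(t,Y_n)$ can then be absorbed into the left-hand side.

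\emph{Step 2: the drift terms.} The term $(\mathfrak c+1)A_n\int_0^t e^{(t-s)A_n}(u_n(s,X_n)-u_n(s,Y_n))\,ds$ carries an unbounded $A_n$. We handle it by smoothing in time, $\|A_ne^{\tau A_n}\|\lesssim \tau^{-1}$ on $H^-$ in the analytic case, combined with the Lipschitz bound from Step~1 weighted by $|\lambda_k^{(i)}|$. Concretely, we estimate $\sum_{k,i}|\lambda_k^{(i)}|^{2}\|\nabla u^{(i)}_{n,k}\|^2_\infty$ after trading part of $|\lambda_k|$ against $e^{-r_{k,i}(t-s)}$ in the time integral. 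The term with $\mathcal V_n[C_n(s,X)-C_n(s,X_n)]$ is bounded by \eqref{SuperStima1} and \eqref{holder-Cn}, which give $\int_0^t(t-s)^{-\gamma_1}\mathbb E\|X(s)-X_n(s)\|^\theta\,ds$; by Proposition \ref{prop:Xn} this tends to $0$ as $n\to\infty$, and likewise for $Y$. The same holds for the term containing $D_xu_n\mathcal V_n$, using \eqref{EF1}, whose summability over $k$ follows from \eqref{cond:1}.

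\emph{Step 3: the stochastic terms, the main obstacle.} The noise terms $\int e^{(t-s)A_n}\mathcal G_n\,dW$ cancel exactly. What remains is $\int_0^t e^{(t-s)A_n}[D_xu_n(s,X_n)-D_xu_n(s,Y_n)]\mathcal G_n\,dW(s)$. By the It\^o isometry its second moment is at most
\[
\int_0^t\|e^{(t-s)A_n}\|^2\sum_{k,i}\|D_x[\mathcal G_n^*\nabla u^{(i)}_{n,k}]\|_\infty^2\,\mathbb E\|X_n(s)-Y_n(s)\|^2\,ds,
\]
where the Hilbert--Schmidt norm has to be handled through \eqref{EF3}. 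Summability of $\sum_k\mu_k^{2\beta}|b_k|^2r_{k,i}^{2\gamma_3(1-\theta)-1}$, converted with \eqref{stima-b-r} at $q=\gamma_3(1-\theta)-\frac12\le-\frac12$ (which holds thanks to \eqref{cond:2}), is where the three cases of \eqref{cond:3} should come from. The delicate point is to obtain a genuine Hilbert--Schmidt bound rather than only an operator-norm bound. For this we plan to use that $\mathcal G_n^*$ acts diagonally on $\widetilde e_j$, and to track the factor $\mu_j^{-\delta}$, which produces the $\delta$-dependence in the third case. Verifying that the exponent bookkeeping matches \eqref{cond:3} in each $\alpha$-regime is the part we expect to be heaviest.

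\emph{Step 4: conclusion.} Collecting the estimates, $\varphi_n(t):=\mathbb E\|X_n(t)-Y_n(t)\|^2$ satisfies
\[
\varphi_n(t)\le K\|x-y\|^2+\varepsilon_n+K\int_0^t\varphi_n(s)\,ds+\tfrac12\sup_{s\le t}\varphi_n(s),
\]
with $\varepsilon_n\to0$ by Proposition \ref{prop:Xn}. Gronwall's lemma then gives the bound, and letting $n\to\infty$ yields \eqref{eq:stima-lip}. Taking $x=y$ gives $X(t)=Y(t)$ a.s. for each $t$, and continuity of paths upgrades this to pathwise uniqueness.
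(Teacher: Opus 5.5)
Your skeleton matches the paper's. You use the It\^o--Tanaka representation for both solutions. You absorb $u_n(t,X_n(t))-u_n(t,Y_n(t))$ through \eqref{EF2} with $\mathfrak c$ large. You let the $C_n(s,X(s))-C_n(s,X_n(s))$ terms vanish by Proposition \ref{prop:Xn}.

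\textbf{First gap: the $A_n$ drift term.} The term $(\mathfrak c+1)A_n\int_0^te^{(t-s)A_n}[u_n(s,X_n(s))-u_n(s,Y_n(s))]\,ds$ is not controlled by what you propose.

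\emph{Smoothing does not work.} An operator-norm bound $\|A_ne^{\tau A_n}\|\lesssim\tau^{-1}$ is not integrable at $\tau=0$. For $\alpha<\frac12$ it is not even available, since the semigroup is not analytic: $|\lambda_k^{\pm}|\sim\mu_k^{1/2}$ while $r_{k,\pm}\sim\mu_k^{\alpha}$.

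\emph{Enlarging $\mathfrak c$ does not work.} This term carries the factor $\mathfrak c+1$, while \eqref{EF2} only gives $\mathfrak C_2(\mathfrak c)\lesssim\mathfrak c^{-(1-\gamma_3(1-\theta))}$. Hence $(\mathfrak c+1)\mathfrak C_2(\mathfrak c)\to\infty$. Unlike the other Lipschitz-type terms, it cannot be absorbed this way, so your Step 4 inequality does not follow from Steps 1--3.

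\emph{The missing idea is a splitting in the modes.} Work componentwise and use $\int_0^te^{-r_{k,i}(t-s)}ds\le\min\{t,r_{k,i}^{-1}\}$.
\begin{itemize}
\item The modes $k>h$ contribute $(\mathfrak c+1)^2C_0^2\mathfrak C_2(\mathfrak c)^2\Sigma_{h,\infty}\sup_{s\le t}\varphi_n(s)$. Here $\Sigma_{h,\infty}$ is the tail of a series whose terms contain $|\lambda_k^{(i)}|^2/{\rm Re}(\lambda_k^{(i)})^2\sim\mu_k^{\max\{0,1-2\alpha\}}$. With $\mathfrak c$ already fixed, this is small once $h$ is large.
\item The finitely many modes $k\le h$ are handled either by a short interval $[0,T_0]$, as in the paper, or, in your format, by putting them into $K_h\int_0^t\varphi_n(s)\,ds$. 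The latter would even let you run Gronwall on all of $[0,T]$ at once.
\end{itemize}
This is exactly where \eqref{cond:3} enters in the hyperbolic range. A bounded Gronwall kernel for all modes at once would instead need $\sum_{k,i}|\lambda_k^{(i)}|^2r_{k,i}^{-1}\|\nabla_xu^{(i)}_{n,k}\|_\infty^2<\infty$. With \eqref{EF2} plugged in, this is stronger than \eqref{cond:3} by a factor $r_{k,i}$ in each term.

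\textbf{Second gap: the stochastic term (Step 3).} Pulling $\|e^{(t-s)A_n}\|^2$ out of the sum discards the decay $e^{2(t-s){\rm Re}(\lambda_k^{(i)})}$ of each mode.

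\emph{The remaining series can diverge.} You are left with $\sum_{k,i}\mu_k^{2\beta}|b_k^{(i)}|^2r_{k,i}^{2\gamma_3(1-\theta)-1}$, which is not implied by the hypotheses. For $\alpha\le\frac12$ it is comparable to $\sum_k\mu_k^{2\beta+\alpha(2-3\theta)}$. Take $\mu_k=k^2$, $\alpha=0.45$, $\beta=0$, $\delta=0.1$, $\theta=0.95$. Then \ref{H3} and \eqref{cond:1}--\eqref{cond:3} all hold, yet the series diverges.

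\emph{The exponent is not admissible.} $q=\gamma_3(1-\theta)-\frac12$ is not $\le-\frac12$, because $\gamma_3(1-\theta)>0$, so \eqref{stima-b-r} cannot be invoked there.

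\emph{The fix.} Keep $e^{2(t-s){\rm Re}(\lambda_k^{(i)})}$ inside the sum over $(k,i)$ and integrate it in time to gain $r_{k,i}^{-1}$. With \eqref{EF3} this gives $q=\gamma_3(1-\theta)-1\le-\frac12$, i.e.\ the same series as in your Step 1, and $\mathfrak C_3(\mathfrak c)\to0$ then handles the absorption.

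\emph{The Hilbert--Schmidt concern is vacuous.} For each $(k,i)$, the operator $h\mapsto\langle\mathcal G_n^*\nabla_xu^{(i)}_{n,k}(s,X_n(s))-\mathcal G_n^*\nabla_xu^{(i)}_{n,k}(s,Y_n(s)),h\rangle_U\Phi_k^{(i)}$ has rank one. Summing over $\widetilde e_\ell$ just returns the $U$-norm of that vector.

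\emph{Where \eqref{cond:3} actually comes from.} It does not come from the stochastic term. The $\delta$-dependence in its third case comes from $\gamma_3$ in \eqref{G4v}, i.e.\ from the bound on $\|\Gamma_{t,n}\|$, not from tracking $\mu_j^{-\delta}$ in $\mathcal G_n^*$. Likewise, the series for the $D_xu_n\mathcal V_n$ terms is dominated by the previous one. Its finiteness therefore rests on \eqref{cond:3}, not on \eqref{cond:1} alone.
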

\begin{proof}
Let $x,y\in H$. Suppose that there exist two weak mild solutions $(X,W)$ and $(Y,W)$ to \eqref{abstract_formulation} defined on the same probability space $(\Omega,\mathcal{F},\{\mathcal{F}_t\}_{t\in [0,T]},\mathbb{P})$ with initial datum $x\in H$ and $y\in H$, respectively. By Theorem \ref{Thm:Ito-Tanaka}, we deduce
\begin{equation}\label{diff}
X_n(t) - Y_n(t) = \sum_{j=0}^7I_j(t),\qquad t\in [0,T],
\end{equation}
where
\begin{align*}
I_0&:=e^{tA_n}[P_n(x-y) + u_n(0,P_nx)-u_n(0,P_ny)]\,,\\
I_1(t)&:=u_{n}(t,Y_n(t))- u_{n}(t,X_n(t))\,,\\
I_2(t)&:= -({\mathfrak c}+1)A_n\int_0^t  e^{(t-s)A_n}
[u_{n}(s,X_n(s))-u_{n}(s,Y_n(s))]ds\,,\\
I_3(t)&:=\int_0^t e^{(t-s)A_n} Du_n(s,X_n(s))\mathcal{V}_n[C_n(s,X(s))-C_n(s,X_n(s))] ds\,,\\
I_4(t)&:=-\int_0^t e^{(t-s)A_n} Du_n(s,Y_n(s))\mathcal{V}_n[C_n(s,Y_n(s))-C_n(s,Y(s))] ds\,,\\
I_5(t)&:=\int_0^te^{(t-s)A_n}\mathcal{V}_n[C_n(s,X(s))-C_n(s,X_n(s))]ds\,,\\
I_6(t)&:=-\int_0^te^{(t-s)A_n}\mathcal{V}_n[C_n(s,Y_n(s))-C_n(s,Y(s))]ds \,,\\
I_7(t)&:=\int_0^te^{(t-s)A_n}
(Du_{n}(s,X_n(s))-Du_{n}(s,Y_n(s)))[\mathcal{G}_ndW(s)],
\end{align*}
where ${\mathfrak{c}}>{\mathfrak{c}}_0$ and ${\mathfrak{c}}_0$ is given by Theorem \ref{Thm: soluzione-kolmogorov}.
We estimate now the left-hand side of \eqref{diff}
in the space $C([0,T]; L^2(\Omega; H))$
by analyzing all the terms $I_0,\ldots,I_7$ separately. Let $0<T_0\leq T$. We will later specify the values of $T_0$ and ${\mathfrak{c}}$.\\
 We recall that
\[
\norm{X}^2_{C([0,T]; L^2(\Omega; H))}:=\sup_{t\in [0,T]}\mathbb{E}\left[\norm{X(t)}_H^2\right].
\]
For simplicity, we set
\begin{align*}
D_{0}:=\sup_{t\in [0,T]}\norm{e^{tA}}_{\mathcal{L}(H)}.
\end{align*}
and
\begin{align}\label{eq:S1}
& S_1:=\sum_{k=1}^{ \infty}\mu_k^{2\min\{\alpha;1-\alpha\}(\gamma_3(1-\theta)-1)+2\beta+\min\{0;1-2\alpha\}}, \\
& S_2:=\sum_{k=1}^{ \infty}\mu_k^{2\min\{\alpha;1-\alpha\}(\gamma_3(1-\theta)-1)+2\beta+\min\{0;1-2\alpha\}+\max\{0;1-2\alpha\}}, \label{eq:S2}\\
& S_3:=\sum_{k=1}^{ \infty}\mu_k^{2\min\{\alpha;1-\alpha\}(\gamma_1\theta+\gamma_2(1-\theta)-\frac32)+2\beta+\min\{0;1-2\alpha\}}, \label{eq:S3}
\end{align}
where $\gamma_1$, $\gamma_2$, and $\gamma_3$ are defined by \eqref{G1} (or \eqref{G1v}),  \eqref{G2} (or \eqref{G2v}) and  \eqref{G4} (or \eqref{G4v}), respectively. \\
We underline that, by \eqref{cond:1}, \eqref{cond:2} and \eqref{cond:3}, we obtain \begin{align}\label{SS2}
S_1,S_3\leq S_2<\infty.
\end{align}
Clearly, $S_1<S_2$ if $\alpha\in\left(0,\frac12\right)$ and $S_1=S_2$ if $\alpha\in\left[\frac12,1\right)$. If $\alpha\in\left(0,\frac12\right]$ the we have
\begin{align*}
\gamma_1=\frac\beta\alpha, \qquad \gamma_2=\frac12+\frac{\beta+\delta}\alpha,   \qquad \gamma_3=\frac12+\max\left\{\frac\delta\alpha;1\right\}.    
\end{align*}
Hence, for every $\theta\in(0,1)$,
\begin{align*}
\gamma_1\theta+\gamma_2(1-\theta)-\frac32-(\gamma_3(1-\theta)-1)
\leq  & \theta\frac\beta\alpha+(1-\theta)\left(\frac{\beta+\delta}\alpha-\frac\delta\alpha\right)-\frac12 
= \frac\beta\alpha-\frac12<0,
\end{align*}
since $\beta<\frac12\alpha$ by \eqref{cond:1}. If $\alpha\in\left[\frac12,1\right)$, then (see also Remark \ref{rmk:conseguenze_varie})
\begin{align*}
\gamma_1=\frac\beta\alpha, \qquad \gamma_2=\frac12+\frac{\beta+\delta}{1-\alpha}, \qquad \gamma_3=\frac 12+\max\left\{1;\frac{\alpha+\delta-\frac12}{1-\alpha}\right\}.
\end{align*}
This implies that, for every $\theta\in(0,1)$,
\begin{align*}
\gamma_1\theta+\gamma_2(1-\theta)-\frac32-(\gamma_3(1-\theta)-1)
\leq  & \theta\frac\beta\alpha+(1-\theta)\left(\frac{\beta+\delta}{1-\alpha}-\frac{\delta+\alpha-\frac12}{1-\alpha}\right)-\frac12  \\
= & \theta\frac\beta\alpha+(1-\theta)\frac{\beta+\frac12-\alpha}{1-\alpha}-\frac12
\leq  \frac\beta\alpha-\frac12<0,    
\end{align*}
recalling that $\frac\beta\alpha\geq \frac{\beta+\frac12-\alpha}{1-\alpha}$ for every $\alpha\in\left[\frac12,1\right)$ and $\beta<\frac12(1-\alpha)$.

For the term $I_0$, we have 
\begin{align*}
I_0= & e^{tA_n}\bigg(P_n(x-y)+
\int_0^1Du_n(0,P_ny+rP_n(x-y))[P_n(x-y)]dr\bigg)\\
= & e^{tA_n}\bigg[P_n(x-y)+\sum_{k=1}^n\int_0^1\scal{\nabla_xu^+_{n,k}(0,P_ny+rP_n(x-y))}{P_n(x-y)}_H\Phi^+_kdr\\
&+\sum_{k=1}^n\int_0^1\scal{\nabla_xu^-_{n,k}(0,P_ny+rP_n(x-y))}{P_n(x-y)}_H\Phi^-_k dr\bigg],
\end{align*}
so that, by Remark \ref{norm-Pn}, \eqref{EF2} and \eqref{stima-b-r}, with $q=\gamma_3(1-\theta)-1\leq -\frac12$ from Remark \ref{rmk:condizioni_gamma}, we infer that 
\begin{align}
\label{I0}
\|I_0\|^2_{C([0,T_0];L^2(\Omega;H))}\leq 2D_0^2c^2\left[1+C_0^2\mathfrak C_2^2({\mathfrak c})S_1\right]\|x-y\|_H^2,
\end{align}
where $c=\sup_{n}\|P_n\|_{\mathscr L(H)}$ and by \eqref{SS2}, $S_1<\infty$.
Analogously, for the term $I_1$ we have
\begin{align*}
I_1(t)&=\int_0^1Du_n(t,X_n(t)+r(Y_n(t)-X_n(t)))[Y_n(t)-X_n(t)]dr\\
&=\sum_{k=1}^n\bigg[\int_0^1\scal{\nabla_xu^+_{n,k}(t,X_n(t)+r(Y_n(t)-X_n(t)))}{ 
Y_n(t)-X_n(t)}_H\Phi^+_k dr\\
&\qquad\quad+\int_0^1\scal{\nabla_xu^-_{n,k}(t,X_n(t)+r(Y_n(t)-X_n(t)))}{ 
Y_n(t)-X_n(t)}_H\Phi^-_k dr\bigg]
\end{align*}
so that, by using \eqref{EF2} and \eqref{stima-b-r} (with $q=\gamma_3(1-\theta)-1$), we infer that 
\begin{align}\label{i1}
\|I_1\|^2_{C([0,T_0]; L^2(\Omega; H))}
\leq 2C_0^2\mathfrak C_2^2({\mathfrak{c}})S_1
\|X_n-Y_n\|_{C([0,T_0]; L^2(\Omega; H))}^2,
\end{align}
where $S_1$ is given by \eqref{eq:S1}.

 For the term $I_2$, let $t\in[0,T_0]$. By \eqref{FourierA}, \eqref{FourierETA}, \eqref{def_Un} and the H\"older inequality we obtain
\begin{align*}
&\|I_2(t)\|^2_{L^2(\Omega;H)}\\
&\leq2({\mathfrak c}+1)^2\mathbb{E}
\bigg[\sum_{k=1}^{n}\bigg(\abs{\lambda_k^+}^2\left(\int_0^te^{(t-s){\rm Re}(\lambda_k^+)}
\abs{u^+_{n,k}(s,X_n(s))-u^+_{n,k}(s,Y_n(s))}ds\right)^2\\
&\qquad\qquad+\abs{\lambda_k^-}^2\left(\int_0^te^{(t-s){\rm Re}(\lambda_k^-)}
\abs{u^-_{n,k}(s,X_n(s))-u^-_{n,k}(s,Y_n(s))}ds\right)^2\bigg)\bigg]\\
&\leq2({\mathfrak c}+1)^2\mathbb{E}
\bigg[\sum_{k=1}^{n}\bigg(\abs{\lambda_k^+}^2\int_0^te^{(t-s){\rm Re}(\lambda_k^+)}ds
\int_0^te^{(t-s){\rm Re}(\lambda_k^+)}
\abs{u^+_{n,k}(s,X_n(s))-u^+_{n,k}(s,Y_n(s))}^2ds\\
&\qquad\qquad+\abs{\lambda_k^-}^2\int_0^te^{(t-s){\rm Re}(\lambda_k^-)}ds
\int_0^te^{(t-s){\rm Re}(\lambda_k^-)}
\abs{u^-_{n,k}(s,X_n(s))-u^-_{n,k}(s,Y_n(s))}^2ds\bigg)\bigg].
\end{align*}

By the mean value theorem, \eqref{EF2} and \eqref{stima-b-r}, we have
\begin{align*}
&\|I_2(t)\|^2_{L^2(\Omega;H)}\leq
2({\mathfrak c}+1)^2\mathfrak C_2^2({\mathfrak c})C_0^2
\|X_n-Y_n\|^2_{C([0,T_0];L^2(\Omega;H))}\\
&\times\sum_{k=1}^{n}
\mu_k^{2\min\{\alpha;1-\alpha\}(\gamma_3(1-\theta)-1)+2\beta+\min\{0;1-2\alpha\}}
\left[\abs{\lambda_k^+}^2\left(\frac{1-e^{t{\rm Re}(\lambda_k^+)}}{-{\rm Re}(\lambda_k^+)}\right)^{2}
+\abs{\lambda_k^-}^2\left(\frac{1-e^{t{\rm Re}(\lambda_k^-)}}{-{\rm Re}(\lambda_k^-)}\right)^{2}\right].
\end{align*}
Since $1-e^{-x}\leq\min\{x,1\}$ for every $x\geq0$, for every $k\in\N$, $i\in\{-,+\}$ and 
$t\in[0,T_0]$ we have
\begin{equation}\label{stima-Theta}
0\leq \frac{1-e^{t{\rm Re}(\lambda_k^{(i)})}}{-{\rm Re}(\lambda_k^{(i)})}
\leq\min\left\{T_0,\frac{1}{-{\rm Re}(\lambda_k^{(i)})}\right\}.
\end{equation}
Let now $h\in\N$ be arbitrary. By \eqref{stima-Theta}, we infer that
\begin{align}\label{i2}
\|I_2\|^2_{C([0,T_0]; L^2(\Omega; H))}
&\leq 2({\mathfrak c}+1)^2\mathfrak C_2^2({\mathfrak c})C_0^2
\Big[T_0^2\Sigma_{1,h}+\Sigma_{h,\infty}\Big]
\|X_n-Y_n\|_{C([0,T_0]; L^2(\Omega; H))}^2,
\end{align}
where
\begin{align}
&\Sigma_{1,h}:=\sum_{k=1}^{h}
\mu_k^{2\min\{\alpha;1-\alpha\}(\gamma_3(1-\theta)-1)+2\beta+\min\{0;1-2\alpha\}}
\left(\abs{\lambda_k^+}^2+\abs{\lambda_k^-}^2\right),\label{eq:sigmaK}\\
&\Sigma_{h,\infty}:=\sum_{k=h+1}^{\infty}
\mu_k^{2\min\{\alpha;1-\alpha\}(\gamma_3(1-\theta)-1)+2\beta+\min\{0;1-2\alpha\}}
\left(\frac{\abs{\lambda_k^+}^2}{{\rm Re}(\lambda_k^+)^2}
+\frac{\abs{\lambda_k^-}^2}{{\rm Re}(\lambda_k^-)^2}\right).\label{eq:thetaK}
\end{align}
We stress that, by \eqref{damped_stime_coefficienti_avl}, \eqref{damped_stime_coefficienti_avl_2} and \eqref{SS2}, we have
\begin{equation}\label{eq:thetaK-zero}
\lim_{h\to\infty}\Sigma_{h,\infty}\leq\lim_{h\to\infty}\sum_{k=h+1}^{ \infty}\mu_k^{2\min\{\alpha, 1-\alpha\}(\gamma_3(1-\theta)-1)+2\beta+\min\{0;1-2\alpha\}+\max\{0,1-2\alpha\}}=0.
\end{equation}

For the term $I_3$, by assumption \ref{H2b} and \eqref{EF1}, for every $t\in[0,T_0]$ we get
\begin{align*}
&\|I_3(t)\|^2_{L^2(\Omega; H)}\\
&\leq 2t\sum_{k=1}^{n}\mathbb{E}
\bigg[\int_0^t e^{2(t-s){\rm Re}(\lambda_k^+)}\scal{\mathcal{V}_n^*
\nabla_xu^+_{n,k}(s,X_n(s))}{C_n(s,X(s))-C_n(s,X_n(s))}_U^2ds\\
&\qquad\qquad +\int_0^te^{2(t-s){\rm Re}(\lambda_k^-)}\scal{\mathcal{V}_n^*
\nabla_xu^-_{n,k}(s,X_n(s))}{C_n(s,X(s))-C_n(s,X_n(s))}_U^2ds\bigg]\\
&\leq T_0C_0^4\mathfrak{C}_1^2({\mathfrak{c}})\left[\sum_{k=1}^{n}\sum_{i\in\{-,+\}}\left(\mu_k^{\beta}\big|b_k^{(i)}\big|\left(-{\rm Re}\big(\lambda_k^{(i)}\big)\right)^{\gamma_1\theta+\gamma_2(1-\theta)-\frac32}\right)^2\right]\sup_{t\in [0,T_0]}\mathbb{E}\left[\norm{X_n(t)-X(t)}^{2\theta}_H\right]\\
&\leq T_0C_0^4\mathfrak{C}_1^2({\mathfrak{c}})\left[\sum_{k=1}^{n}\mu_k^{2\min\{\alpha; 1-\alpha\}(\gamma_1\theta+\gamma_2(1-\theta)-1)+2\beta+\min\{0;1-2\alpha\}-\min\{\alpha; 1-\alpha\}}\right]\sup_{t\in [0,T_0]}\mathbb{E}\left[\norm{X_n(t)-X(t)}^{2\theta}_H\right],
\end{align*}
where the last inequality follows by \eqref{stima-b-r} with $q=\gamma_1\theta+\gamma_2(1-\theta)-\frac32$, which satisfies $q\leq-\frac12$ by \eqref{cond:1}. Hence
\begin{align}\label{i3}
&\|I_3\|^2_{C([0,T_0]; L^2(\Omega; H))}\leq  T_0C_0^4\mathfrak{C}_1^2({\mathfrak{c}})S_3\sup_{t\in [0,T_0]}\mathbb{E}\left[\norm{X_n(t)-X(t)}^{2\theta}_H\right],
\end{align}
where $S_3$ has been defined in \eqref{eq:S3} and, by \eqref{SS2}, $S_3<\infty$.
In the same way, for $I_4$ we get
\begin{align}\label{i4}
&\|I_4\|^2_{C([0,T_0]; L^2(\Omega; H))}\leq T_0C_0^4\mathfrak{C}_1^2({\mathfrak{c}})S_3\sup_{t\in [0,T_0]}\mathbb{E}\left[\norm{Y_n(t)-Y(t)}^{2\theta}_H\right].
\end{align}

By \eqref{SuperStima1} (recalling that by assumption \ref{H3} $\gamma_1<1$) and the H\"older inequality we get
\begin{align*}
\mathbb{E}\norm{I_5(t)}_H^2
&\leq \mathbb{E}\left[\left(\int_0^t\norm{e^{(t-s)A_n}
\mathcal{V}_n[C_n(s,X(s))-C_n(s,X_n(s))]}_Hds\right)^2\right]\\
&\leq M_0^2C_0^2\mathbb{E}\left[\left(\int_0^t\frac{1}{(t-s)^{\frac{\gamma_1}{2}}}\frac{1}{(t-s)^{\frac{\gamma_1}{2}}}\norm{X(s)-X_n(s)}^\theta_Hds\right)^2\right]\\
&\leq M_0^2C_0^2\mathbb{E}\left[\left(\int_0^t\frac{1}{(t-s)^{\gamma_1}}ds\right)\left(\int_0^t\frac{1}{(t-s)^{\gamma_1}}\norm{X(s)-X_n(s)}^{2\theta}_H ds\right)\right]\\
&\leq \frac{M_0^2C_0^2 T_0^{1-\gamma_1}}{1-\gamma_1}
\int_0^t\frac{1}{(t-s)^{\gamma_1}}\mathbb{E}
\norm{X(s)-X_n(s)}^{2\theta}_Hds\\
&\leq \frac{M_0^2C_0^2 T_0^{2-2\gamma_1}}{(1-\gamma_1)^2}
\sup_{t\in [0,T_0]}\mathbb{E}\norm{X(t)-X_n(t)}^{2\theta}_H,
\end{align*}
so that
\begin{align}\label{i5}
\|I_5\|^2_{C([0,T_0]; L^2(\Omega; H))}
\leq \frac{M_0^2C_0^2T_0^{2-2\gamma_1}}{(1-\gamma_1)^2}
\sup_{t\in [0,T_0]}\mathbb{E}\norm{X(t)-X_n(t)}^{2\theta}_H.
\end{align}
Analogously, we obtain for $I_6$ that 
\begin{align}\label{i6}
\|I_6\|^2_{C([0,T_0]; L^2(\Omega; H))}
\leq \frac{M_0^2C_0^2T_0^{2-2\gamma_1}}{(1-\gamma_1)^2}
\sup_{t\in [0,T_0]}\mathbb{E}\norm{Y(t)-Y_n(t)}^{2\theta}_H.
\end{align}

Eventually, we estimate $I_7$. Let $\{\widetilde e_k\}_{k\in\N}$ be the orthonormal basis of $U$ introduced in \eqref{Def:base-ON}. By the It\^o isometry, for every $t\in[0,T_0]$ we get
\begin{align*}
\|I_7(t)\|^2_{L^2(\Omega; H)}&=\int_0^t\mathbb{E}\left[\sum_{\ell=1}^{ \infty}\norm{e^{(t-s)A_n}\left[D_xu_n(s,X_n(s))-D_xu_n(s,Y_n(s))\right]\mathcal{G}_n\widetilde e_\ell}^2_H\right]ds\\
&\leq 2\int_0^t\mathbb{E}\bigg[\sum_{\ell=1}^{ \infty}\sum_{k=1}^n\bigg(e^{2(t-s){\rm Re}(\lambda_k^+)}\scal{\mathcal{G}_n^*\nabla_xu^+_{n,k}(s,X_n(s))-\mathcal{G}_n^*\nabla_xu^+_{n,k}(s,Y_n(s))}{\widetilde e_\ell}^2_U\\
&\qquad\quad\qquad+e^{2(t-s){\rm Re}(\lambda_k^-)}\scal{\mathcal{G}_n^*\nabla_xu^-_{n,k}(s,X_n(s))-\mathcal{G}_n^*\nabla_xu^-_{n,k}(s,Y_n(s))}{\widetilde e_\ell}^2_U\bigg)\bigg]ds\\
&= 2\int_0^t\mathbb{E}\bigg[\sum_{k=1}^n\bigg(e^{2(t-s){\rm Re}(\lambda_k^+)}\norm{\mathcal{G}_n^*\nabla_xu^+_{n,k}(s,X_n(s))-\mathcal{G}_n^*\nabla_xu^+_{n,k}(s,Y_n(s))}^2_U\\
&\qquad\quad\qquad+e^{2(t-s){\rm Re}(\lambda_k^-)}\norm{\mathcal{G}_n^*\nabla_xu^-_{n,k}(s,X_n(s))-\mathcal{G}_n^*\nabla_xu^-_{n,k}(s,Y_n(s))}^2_U\bigg)\bigg]ds\\
&\leq C_0^2\mathfrak{C}_3^2({\mathfrak{c}})\left[\sum_{k=1}^{n}\sum_{i\in\{-,+\}}\left(\mu_k^{\beta}\big|b_k^{(i)}\big|\left(-{\rm Re}\big(\lambda_k^{(i)}\big)\right)^{\gamma_3(1-\theta)-1}\right)^2\right]\|X_n-Y_n\|_{C([0,T_0]; L^2(\Omega; H))}^2\\
&\leq C_0^2\mathfrak{C}_3^2({\mathfrak{c}})\left[\sum_{k=1}^{n}\mu_k^{2\min\{\alpha, 1-\alpha\}(\gamma_3(1-\theta)-1)+2\beta+\min\{0;1-2\alpha\}}\right]\|X_n-Y_n\|_{C([0,T_0]; L^2(\Omega; H))}^2,
\end{align*}
where the last inequality follows by \eqref{stima-b-r} with $q=\gamma_3(1-\theta)-1$, which satisfies $q\leq-\frac12$ by \eqref{cond:2}. Hence
\begin{align}\label{i7}
&\|I_7\|^2_{C([0,T_0]; L^2(\Omega; H))}\leq C_0^2\mathfrak{C}_3^2({\mathfrak{c}})S_1\|X_n-Y_n\|_{C([0,T_0]; L^2(\Omega; H))}^2,
\end{align}
where $S_1$ is given by \eqref{eq:S1}.
Combining \eqref{I0}, \eqref{i1}, \eqref{i2}, \eqref{i3}, \eqref{i4}, \eqref{i5}, 
\eqref{i6} and \eqref{i7}, and recalling that $X_n-Y_n=\sum_{j=0}^7I_j$, for every $n,h\in\N$ we 
obtain
\begin{align*}
&\|X_n-Y_n\|_{C([0,T_0]; L^2(\Omega; H))}^2\leq 8\sum_{j=0}^{7}
\|I_j\|^2_{C([0,T_0]; L^2(\Omega; H))}\\
&\leq 16D_0^2c^2
\big[1+C_0^2\mathfrak C_2^2({\mathfrak c})S_1\big]\|x-y\|_H^2
+16C_0^2\Big[\mathfrak C_2^2({\mathfrak c})
+\mathfrak C_3^2({\mathfrak c})\Big]S_1\,
\|X_n-Y_n\|_{C([0,T_0]; L^2(\Omega; H))}^2\\
&\quad+16({\mathfrak c}+1)^2C_0^2\mathfrak C_2^2({\mathfrak c})
\Big[T_0^2\Sigma_{1,h}+\Sigma_{h,\infty}\Big]
\|X_n-Y_n\|_{C([0,T_0]; L^2(\Omega; H))}^2\\
&\quad+8\left[T_0C_0^4\mathfrak C_1^2({\mathfrak c})S_3
+\frac{M_0^2C_0^2T_0^{2-2\gamma_1}}{(1-\gamma_1)^2}\right]
\sup_{t\in [0,T_0]}\left(\mathbb{E}\norm{X_n(t)-X(t)}^{2\theta}_H
+\mathbb{E}\norm{Y_n(t)-Y(t)}^{2\theta}_H\right).
\end{align*}
Since $\theta\in(0,1)$, by Proposition \ref{prop:Xn} and the Jensen inequality, letting $n$ tend to 
infinity we obtain
\begin{align}\label{eq:pre-con}
&\|X-Y\|_{C([0,T_0]; L^2(\Omega; H))}^2
\leq 16D_0^2c^2
\big[1+C_0^2\mathfrak C_2^2({\mathfrak c})S_1\big]\|x-y\|_H^2\notag\\
&\qquad+16C_0^2\Big[\big(\mathfrak C_2^2({\mathfrak c})
+\mathfrak C_3^2({\mathfrak c})\big)S_1
+({\mathfrak c}+1)^2\mathfrak C_2^2({\mathfrak c})
\big(T_0^2\Sigma_{1,h}+\Sigma_{h,\infty}\big)\Big]
\|X-Y\|_{C([0,T_0]; L^2(\Omega; H))}^2 .
\end{align}
By \eqref{Lim-cost} we choose 
${\mathfrak c}>{\mathfrak c}_0$ large enough such that
\begin{align}\label{scelta-c}
16C_0^2\Big[\mathfrak C_2^2({\mathfrak c})
+\mathfrak C_3^2({\mathfrak c})\Big]S_1&\leq\frac14.
\end{align}
Since
${\mathfrak c}$ is fixed, by \eqref{eq:thetaK-zero} we choose $h\in\N$ 
large enough such that
\begin{equation}\label{scelta-K}
16({\mathfrak c}+1)^2C_0^2\mathfrak C_2^2({\mathfrak c})\Sigma_{h,\infty}
\leq\frac18 .
\end{equation}
Finally, since ${\mathfrak c}$ and $h$ are fixed, we choose $T_0\in(0,T]$ small enough such that
\begin{equation}\label{scelta-T0}
16({\mathfrak c}+1)^2C_0^2\mathfrak C_2^2({\mathfrak c})\,
\Sigma_{1,h}T_0^2\leq\frac18.
\end{equation}
Hence, by \eqref{eq:pre-con}, \eqref{scelta-c}, \eqref{scelta-K} and \eqref{scelta-T0}, recalling 
that $C_0^2\mathfrak C_2^2({\mathfrak c})S_1\leq 1$ by \eqref{scelta-c}, we deduce
\[
\|X-Y\|_{C([0,T_0]; L^2(\Omega; H))}^2\leq 
32D_0^2c^2\|x-y\|_H^2+\frac12\|X-Y\|_{C([0,T_0]; L^2(\Omega; H))}^2 ,
\]
and so that
\begin{equation}\label{stima-T0}
\|X-Y\|_{C([0,T_0]; L^2(\Omega; H))}^2\leq 64\,D_0^2c^2\,\|x-y\|_H^2.
\end{equation}
Finally, by \eqref{stima-T0} and standard arguments we obtain the statement for an arbitrary $T>0$.
\end{proof}

In the next theorem, we will remove the boundedness assumption \ref{H2b} by exploiting the localization method presented in \cite[Section 4.2]{BOS}.

\begin{thm}
\label{thm:path_uniq}
Let $T>0$. Assume that assumptions \ref{H0}, \ref{H0.5}, \ref{H1}, \ref{H2}, \ref{H3} and conditions
\begin{align*}
&\beta+\delta<\frac12\min\{\alpha,1-\alpha\},\\
&\theta>\max\left\{\frac23,\frac{2\delta+(2\alpha-1)\vee0}{2\delta+\alpha}
\right\},\\
&\phantom{a}\notag\\
&\begin{cases}
\vspace{5pt}
\displaystyle\sum_{k=1}^{ \infty}\mu_k^{2\beta+1-\alpha(1+3\theta)}<\infty, &{\rm if }\;\; \alpha\in \left(0,\frac12\right],\\[1mm]
\displaystyle\sum_{k=1}^{ \infty}\mu_k^{2\beta+1-2\alpha+(1-\alpha)(1-3\theta)}<\infty, &{\rm if }\;\; \alpha\in \left[\frac12, \frac34-\frac\delta2\right], \\[1mm]
\displaystyle\sum_{k=1}^\infty\mu_k^{2\beta-1+2\delta(1-\theta)+\alpha(1-\theta)}<\infty, &{\rm if }\;\; \alpha\in\left[\frac34-\frac\delta2,1\right),
\end{cases}
\end{align*}
are satisfied. Then pathwise uniqueness for \eqref{abstract_formulation} holds.
\end{thm}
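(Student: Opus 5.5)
The plan is to reduce Theorem \ref{thm:path_uniq} to Theorem \ref{thm:U-path} through a localization argument, mirroring the proof of Theorem \ref{weak-UB}.

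Fix $x\in H$ and two weak mild solutions $(X,W)$ and $(Y,W)$ on the same filtered probability space with $X(0)=Y(0)=x$. For $N\in\N$ we introduce the stopping times
\[
\tau_N:=T\wedge\inf\{t\in[0,T]:\ \|X(t)\|_H>N \text{ or } \|Y(t)\|_H>N\}.
\]
Since $X$ and $Y$ have continuous trajectories, $\tau_N\nearrow T$ $\mathbb P$-a.s. We also define the truncated drift $C_N^T(t,z):=C(t\wedge T,\Pi_Nz)$, where $\Pi_N$ is the projection onto the closed ball $B_N(0)$. As already observed in the proof of Theorem \ref{weak-UB}, $C_N^T$ satisfies \ref{H2b}. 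The parameter conditions \eqref{cond:1}--\eqref{cond:3} do not involve the drift at all, so they remain valid for $C_N^T$.

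Next, by the localization-extension procedure of \cite[Section 4.2]{BOS}, we construct, on the same probability space and driven by the same $W$, two weak mild solutions $\widetilde X_N$ and $\widetilde Y_N$ of \eqref{abstract_formulation} with $C$ replaced by $C_N^T$. They have common initial datum $x$ and satisfy
\[
\widetilde X_N=X,\qquad \widetilde Y_N=Y \quad \text{on } [\![0,\tau_N]\!].
\]
Concretely, on $[0,\tau_N]$ we take the original processes, and after $\tau_N$ we continue with a solution of the truncated equation started from $X(\tau_N)$ (respectively $Y(\tau_N)$), using the mild formulation restarted at the stopping time. Because the same $W$ is used for both extensions, $\widetilde X_N$ and $\widetilde Y_N$ live on a common space. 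This makes Theorem \ref{thm:U-path} applicable: estimate \eqref{eq:stima-lip} with $x=y$ gives $\mathbb E\|\widetilde X_N(t)-\widetilde Y_N(t)\|_H^2=0$ for every $t\in[0,T]$. By continuity of trajectories, $\widetilde X_N=\widetilde Y_N$ on $[0,T]$ almost surely. Hence $X=Y$ on $[0,\tau_N]$ almost surely, and letting $N\to\infty$ yields $\mathbb P(X(t)=Y(t)\ \forall t\in[0,T])=1$.

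The main obstacle is the extension step. We must ensure that the continuation after $\tau_N$ exists as a mild solution with respect to the given filtration and the given $W$, rather than only in law. If existence of strong solutions for the truncated equation is not available, the fallback is to avoid extension and rerun the proof of Theorem \ref{thm:U-path} on the stopped processes directly. In that version, the It\^o--Tanaka representation of Theorem \ref{Thm:Ito-Tanaka} is applied up to $t\wedge\tau_N$ with $C_N^T$, which coincides with $C$ along $X$ and $Y$ before $\tau_N$. The estimates \eqref{I0}--\eqref{i7} then go through with $C([0,T_0];L^2(\Omega;H))$ norms of the stopped differences. Care is needed only for $I_2$ and the semigroup convolutions, which are nonlocal in time, so one should stop before passing to the mild formulation.
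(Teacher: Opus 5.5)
Your proposal is correct and is essentially the paper's proof: it uses the same stopping times $\tau_N$ and the same truncation $C^T_N(t,z)=C(t\wedge T,\Pi_Nz)$. It then applies the localisation--extension of \cite[Section 4.2]{BOS} to get $\widetilde X_N,\widetilde Y_N$ driven by the same $W$ and coinciding with $X,Y$ on $[\![0,\tau_N]\!]$, followed by Theorem \ref{thm:U-path} and the limit $N\to\infty$. The paper takes the extension step directly from \cite{BOS}, so your fallback via stopped It\^o--Tanaka estimates is not needed.
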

\begin{proof}
Let $T>0$ and $x\in H$. Suppose that there exist two weak mild solutions $(X,W)$ and $(Y,W)$ to \eqref{abstract_formulation}  defined on the same probability space $(\Omega,\mathcal{F},\{\mathcal{F}_t\}_{t\in [0,T]},\mathbb{P})$ with initial datum $x$.
 For every $N\in\N$ we define the stopping time
\[
\tau_N:=T\wedge \inf\{t\in [0,T]\, :\, \norm{X(t)}_H>N\}\wedge \inf\{t\in [0,T]\, :\, \norm{Y(t)}_H>N\}.
\]
It is not difficult to show that $\tau_N\nearrow T$ $\mathbb{P}$-almost surely as $N\rightarrow\infty$. Let $\Pi_N$ be the orthogonal projection on the closed ball of $H$ with center in $0$ and radius $N$ and let $C^T_N:[0, \infty)\times H\rightarrow U$ be defined by 
\[
 C^T_N(t,x):=C(t\wedge T,\Pi_Nx)\,,\quad t\geq 0,\, x\in H.
\]
Since $C$ verifies \ref{H2} then $C^T_N$ verifies \ref{H2b}, for every $N\in\N$. Let now $N\in\mathbb N$ be fixed. By the localisation-extension arguments in \cite[Section 4.2]{BOS} (with $\alpha=0$), there exist two weak mild solutions
$(\widetilde{X}_N,W)$ and $(\widetilde{Y}_N,W)$ to \eqref{abstract_formulation}  with $C$ replaced by $C^T_N$ on $[0, \infty)$, such that
\begin{equation*}
X=\widetilde{X}_N\,,\quad  
Y=\widetilde{Y}_N\,,\qquad\text{in } [\![0,\tau_N]\!].
\end{equation*}
Hence, by Theorem \ref{thm:U-path}  we infer that 
\begin{align*}
\mathbb P(\widetilde X_N(t)=\widetilde Y_N(t),
\quad\forall\,t\in[0,T])=1\,,
\end{align*}
so that 
\[
  X=Y \quad\text{in } [\![0,\tau_N]\!] \quad\forall\,N\in\mathbb  N\,.
\]
By setting $\Omega_N:=\{\omega\in \Omega: X(t)=Y(t),\;\,\forall\,t\in[0,\tau_N(\omega)]\}$, we have then that 
$\mathbb P(\Omega_N)=1$ for every $N\in\mathbb N$.
Since $\Omega_N\supseteq \Omega_{N+1}$ for every $N\in\N$,  we get 
\[
\Omega_N\downarrow \{\omega\in\Omega\, :\, X(\omega,t)=Y(\omega,t),\quad \forall\, t\in [0,T]\}
\]
and we conclude by the continuity of $\mathbb{P}$.
\end{proof}

By Theorem \ref{thm:path_uniq} (assuming $\mu_n=n^{2k/d}$ with $k\in\{1,2\}$ and $d\in\{1,2,3\}$) we can deduce explicit conditions on $\alpha,\beta,\delta$ and $\theta$ which guarantee pathwise uniqueness for \eqref{concrete_damped_equation}.

\begin{coro}\label{cor:path}
Pathwise uniqueness holds for \eqref{concrete_damped_equation} in the following cases, which for reader's convenience we split according to the different intervals the parameter $\alpha$ belongs to. 

\vspace{1mm}
\begin{itemize}
\item Case $\displaystyle \alpha\in\left(\frac{d}{8k}+\frac14,\frac12\right]: \ 
\begin{cases}
&d<2k\\
&\theta\in \left(\max\left\{\frac23; \frac{d}{6k\alpha}+\frac{1-\alpha}{3\alpha}\right\}, 1\right),\\
&\beta\in \left[0,\min\left\{\frac\alpha2; \alpha-\frac d{4k}; \frac{\alpha(1+3\theta)}{2}-\frac{d}{4k}-\frac12\right\}\right),\\
&\delta\in \left(\frac d{4k}-\frac\alpha2,\frac\alpha2-\beta\right)\cap [0,\infty),
\end{cases}$\\[1mm]
\item Case $\displaystyle\alpha\in \left(\frac12, \frac34\right): \ 
\begin{cases}
&d<2k\\
&\theta\in \left(\max\left\{\frac23; \frac{2-3\alpha+\frac d{2k}}{3(1-\alpha)}\right\},1\right), \\
&\beta\in \left[0,\min\left\{\frac{1-\alpha}2; \alpha-\frac12+\frac{(1-\alpha)(3\theta-1)}2-\frac d{4k}\right\}\right), \\
&\delta\in \left(\frac d{4k}-\frac\alpha 2,\min\left\{\frac{1-\alpha}2-\beta; \frac32-2\alpha; \frac{1-\alpha(2-\theta)}{2(1-\theta)}\right\}\right)\cap [0,\infty),
\end{cases}$ \\[1mm]
\item Case $\displaystyle \alpha\in \left[\frac34, 1\right):
\ \begin{cases}
&d<2k\\
&\delta\in \left(\frac d{4k}-\frac{\alpha}2,\frac{1-\alpha}2\right)\cap [0,\infty), \\
&\theta\in \left(\max\left\{\frac23; \frac{2\delta+2\alpha-1}{2\delta+\alpha}; \frac{\alpha+2\delta-1+\frac d{2k}}{\alpha+2\delta}\right\},1\right), \\
&\beta\in \left[0,\min\left\{\frac{1-\alpha}2-\delta; \frac{1-\alpha(1-\theta)}2-\delta(1-\theta)-\frac{d}{4k}\right\}\right).
\end{cases}$
\end{itemize}

\end{coro}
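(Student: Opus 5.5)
This corollary follows by specializing Theorem \ref{thm:path_uniq} to the concrete setting. We take $U=L^2([0,1]^d)$ and $\Lambda=(-\Delta_\xi)^k$ with Dirichlet conditions, so that $\mu_n\sim n^{2k/d}$. We then check, case by case in $\alpha$, that the listed parameter ranges imply the three hypotheses \ref{H3}, \eqref{cond:1}, \eqref{cond:2} and the summability condition \eqref{cond:3}.

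\textbf{Hypothesis \ref{H3}.} With $\mu_n\sim n^{2k/d}$, a series $\sum_n\mu_n^{s}$ converges if and only if $s<-d/(2k)$. The trace condition $\sum_n\mu_n^{-\alpha-2\delta}<\infty$ is therefore equivalent to $\delta>\frac d{4k}-\frac\alpha2$. This gives the lower bound on $\delta$ in every case. The constraint $d<2k$ is needed so that the $\delta$-interval is nonempty together with the upper bounds below. Finally, $\beta<\alpha$ follows from $\beta<\alpha/2$ or $\beta<(1-\alpha)/2\le\alpha$, and the condition $\rho\neq 2\mu_n^{1/2-\alpha}$ is a standing choice of $\rho$.

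\textbf{Condition \eqref{cond:1}.} For $\alpha\le\frac12$ it reads $\delta<\frac\alpha2-\beta$. For $\alpha\ge\frac12$ it reads $\delta<\frac{1-\alpha}2-\beta$. These are exactly the upper bounds on $\delta$ in the statement, and they also force $\beta<\frac\alpha2$, respectively $\beta<\frac{1-\alpha}2$.

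\textbf{Condition \eqref{cond:2}.}
\begin{itemize}
\item For $\alpha\le\frac12$ the second term in the maximum is $\frac{2\delta}{2\delta+\alpha}$. With $\delta<\frac\alpha2$ this is smaller than $\frac12<\frac23$, so only $\theta>\frac23$ survives.
\item For $\alpha\ge\frac12$ we need $\theta>\frac{2\delta+2\alpha-1}{2\delta+\alpha}$. In the middle case, where $\delta$ is small, this is implied by the other listed bounds, specifically $\delta<\frac{1-\alpha(2-\theta)}{2(1-\theta)}$, which is precisely this inequality solved for $\delta$. In the last case it is listed explicitly.
\end{itemize}

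\textbf{Summability condition \eqref{cond:3}.} Each sum is converted into an inequality via the criterion $s<-d/(2k)$.
\begin{itemize}
\item For $\alpha\in(0,\frac12]$: $2\beta+1-\alpha(1+3\theta)<-\frac d{2k}$ is equivalent to $\beta<\frac{\alpha(1+3\theta)}2-\frac12-\frac d{4k}$. Requiring that this bound be positive gives the lower bound on $\theta$. Requiring that it be compatible with $\theta<1$ gives $\alpha>\frac d{8k}+\frac14$.
\item For the middle range of $\alpha$: the exponent condition is equivalent to $\beta<\alpha-\frac12+\frac{(1-\alpha)(3\theta-1)}2-\frac d{4k}$. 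Requiring that $\alpha$ lie in $[\frac12,\frac34-\frac\delta2]$ gives $\delta\le\frac32-2\alpha$.
\item For the last range: the condition is equivalent to $\beta<\frac{1-\alpha(1-\theta)}2-\delta(1-\theta)-\frac d{4k}$. Requiring positivity gives the third lower bound on $\theta$.
\end{itemize}

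\textbf{Main obstacle.} The main difficulty is the bookkeeping at the boundaries between the $\alpha$-regimes of \eqref{cond:3}, which depend on $\delta$ through the threshold $\frac34-\frac\delta2$. For $\alpha\in(\frac12,\frac34)$ we choose to stay in the middle regime by imposing $\delta<\frac32-2\alpha$. For $\alpha\ge\frac34$ we have $\alpha\ge\frac34-\frac\delta2$ automatically, since $\delta\ge0$. We also have to verify that each listed set is nonempty and that all the auxiliary bounds combine as claimed; these are elementary inequality checks. Once the hypotheses are verified, Theorem \ref{thm:path_uniq} yields pathwise uniqueness.
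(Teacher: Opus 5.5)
Your proposal is correct and is essentially the paper's own argument: the corollary is obtained by specializing Theorem \ref{thm:path_uniq} with $\mu_n\sim n^{2k/d}$, and translating \ref{H3}, \eqref{cond:1}, \eqref{cond:2} and \eqref{cond:3} into explicit inequalities via the criterion $\sum_n\mu_n^{s}<\infty\iff s<-\frac{d}{2k}$, with the regime of \eqref{cond:3} for $\alpha\in(\frac12,\frac34)$ fixed by $\delta<\frac32-2\alpha$. One minor aside: in that middle case, $\delta<\frac32-2\alpha$ already gives $\frac{2\delta+2\alpha-1}{2\delta+\alpha}\le\frac23<\theta$, so the listed bound $\delta<\frac{1-\alpha(2-\theta)}{2(1-\theta)}$ is redundant, although using it as you do is perfectly valid.
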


\appendix
\section{A criterion for weak uniqueness}
\label{app:mart}
In this appendix, we slightly modify the results in \cite[Appendix A.2]{Pri2015} to cover the case in which the nonlinear part of the drift depends on time. Before stating the lemma we recall that, given $t_0\geq0$ and $x\in H$, we say that 
$(\Omega,\mathcal F,(\mathcal F_t)_{t\geq t_0},\mathbb P,W,X)$ is a weak mild solution to 
\eqref{abstract_formulation} on $[t_0,\infty)$ with initial datum $x$ if 
$(\Omega,\mathcal F,(\mathcal F_t)_{t\geq t_0},\mathbb P)$ is a complete filtered probability 
space, $X=\{X(t)\}_{t\geq t_0}$ is a continuous $(\mathcal F_t)_{t\geq t_0}$-adapted $H$-valued 
process and $W=\{W(t)\}_{t\geq t_0}$ is a $U$-valued cylindrical Wiener process with 
$W(t_0)=0$, such that, for every $t\geq t_0$,
\begin{align}\label{mild_solution_t0}
X(t)=e^{(t-t_0)A}x+\int_{t_0}^te^{(t-s)A}\mathcal VC(s,X(s))ds
+\int_{t_0}^t e^{(t-s)A}\mathcal GdW(s), \qquad \mathbb P\textrm{-a.s.}
\end{align}
Definition \ref{def:mild-sol} corresponds to the case $t_0=0$.

\begin{lemma}\label{lem:mart}
Assume \ref{H0}, \ref{H0.5}, \ref{H1}, \ref{H2b} and \ref{H3}, and suppose that, for every 
$t_0\geq0$ and $x\in H$, any two weak mild solutions to \eqref{abstract_formulation} on 
$[t_0,\infty)$ with initial datum $x$ have the same one-dimensional marginal laws on $H$. 
Then weak uniqueness holds for \eqref{abstract_formulation}.
\end{lemma}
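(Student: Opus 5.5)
The plan follows the classical Stroock--Varadhan / Ethier--Kurtz route, adapted as in \cite[Appendix A.2]{Pri2015}. The aim is to show that uniqueness of one-dimensional marginals, for every starting time $t_0$ and every initial datum, implies uniqueness of all finite-dimensional distributions. Since the law on $C([0,T];H)$ is determined by these, this gives weak uniqueness in the sense of Definition \ref{uniqueness}.

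\textbf{Step 1: martingale problem.} Let $\mathcal D$ be the class of cylindrical test functions $\varphi(x)=\phi(\langle x,h_1\rangle_H,\dots,\langle x,h_m\rangle_H)$ with $\phi\in C^2_b(\R^m)$ and $h_j\in D(A^*)$. For such $\varphi$ set
\[
\mathcal L_t\varphi(x)=\tfrac12{\rm Tr}[\mathcal G\mathcal G^*D^2\varphi(x)]+\langle x,A^*\nabla\varphi(x)\rangle_H+\langle C(t,x),\mathcal V^*\nabla\varphi(x)\rangle_U .
\]
We need $\mathcal V^*h_j$ to make sense in $U$. Since $\mathcal V^*h=\Lambda^\beta h_2$, we restrict $h_j$ to finite spans of $\Phi_k^\pm$, which are smooth and form a core for $A^*$ up to the biorthogonal system. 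We then show that $X$ is a weak mild solution on $[t_0,\infty)$ if and only if its law $P$ solves the time-inhomogeneous martingale problem: $\varphi(X(t))-\int_{t_0}^t\mathcal L_s\varphi(X(s))ds$ is a $P$-martingale for every $\varphi\in\mathcal D$.

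For the forward direction, pass to the weak formulation by testing the mild equation against $h_j$; this uses the extension of $e^{tA}\mathcal V$ from Proposition \ref{prop:estens_etAB}. Then apply the finite-dimensional It\^o formula.

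For the converse, use the standard representation theorem for martingales (possibly on an enlarged probability space) to construct the cylindrical Wiener process $W$. Then go from the weak to the mild formulation, as in \cite[Theorem 5.4 / Prop.~A.2]{Pri2015}. Boundedness of $C$ from \ref{H2b} makes all integrals finite.

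\textbf{Step 2: regular conditional laws.} Take a solution $X$ with law $P$ and a time $s\ge t_0$. Let $P_\omega$ be a regular conditional distribution of the shifted path $X(s+\cdot)$ given $\mathcal F_s$. By Step 1 and a countable-dense-family argument (the class $\mathcal D$ can be replaced by a countable subfamily, with $\phi$ and $h_j$ rational), for $P$-a.e.\ $\omega$ the measure $P_\omega$ solves the martingale problem started at time $s$ from $X(s,\omega)$. By Step 1 again, $P_\omega$ is the law of a weak mild solution on $[s,\infty)$ with that initial datum. The time dependence only enters through the starting time $s$ in $\mathcal L_{s+\cdot}$, which is why the hypothesis is assumed for every $t_0$.

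\textbf{Step 3: induction on finite-dimensional distributions.} Let $P^1,P^2$ be the laws of two solutions from $(t_0,x)$. We prove by induction on $m$ that $E^1[\prod_{j=1}^m f_j(X(t_j))]=E^2[\prod_{j=1}^m f_j(X(t_j))]$ for all $t_0\le t_1<\dots<t_m$ and all $f_j\in C_b(H)$. The case $m=1$ is the hypothesis.

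For the inductive step, condition on $\mathcal F_{t_{m-1}}$. By Step 2 and the hypothesis applied with starting time $t_{m-1}$, the conditional expectation $E^i[f_m(X(t_m))\mid\mathcal F_{t_{m-1}}]=\psi(X(t_{m-1}))$. Here $\psi(z)$ is the common value of $E[f_m(Z(t_m))]$ over all solutions $Z$ started from $(t_{m-1},z)$, so $\psi$ does not depend on $i$.

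We must check that $\psi$ is Borel. This follows because the set of solutions of the martingale problem is measurable in the initial datum. Alternatively, use existence of a solution for each $z$ (e.g.\ Girsanov with the bounded drift, as in the scheme of \cite{Pri2015}) together with measurable selection.

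The expectation then reduces to the $m-1$ case with $f_{m-1}$ replaced by $f_{m-1}\psi$. The induction hypothesis extends from $C_b$ to bounded Borel functions by a monotone class argument.

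\textbf{Main obstacle.} The hardest part is Step 1, in particular the converse implication from the martingale problem to a mild solution. Two features cause trouble: the singular operator $\mathcal V$, which forces the careful choice of test directions $h_j$ in finite spans of eigenvectors of $A^*$, and the non-orthogonal spectral decomposition. The measurability of $\psi$ in Step 3 is a secondary technical point.
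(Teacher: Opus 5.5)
Your proposal is correct in outline but organized differently from the paper, which does no conditioning by hand. The paper lifts to the space-time process $(t,Z(t))$ on $E=[0,\infty)\times H$. It uses the time-homogeneous operator $\widetilde L=\partial_t+L_t$ on products $\eta(t)\varphi(y)$, with $\varphi$ cylindrical along the basis $g_j$ and $f\in C^2_c$. It then identifies weak mild solutions on $[t_0,\infty)$ with solutions of the martingale problem for $(\widetilde L,\delta_{(t_0,x)})$ (the converse via \cite[Theorem 3.6]{Kun2013}). Finally it checks \cite[Hypothesis 17]{Pri2015}, notes that the marginal of $(t,Z(t))$ is $\delta_t\otimes\mu_t$, and invokes \cite[Theorem 18]{Pri2015}.

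Your Steps 2--3 are essentially the Stroock--Varadhan/Ethier--Kurtz argument behind that theorem, run directly with the shifted inhomogeneous generator $\mathcal L_{s+\cdot}$. The space-time lift buys an off-the-shelf time-homogeneous result, with all the regular-conditional-probability and measurability bookkeeping included, and it turns ``for every $t_0$'' into ``for every initial point of $E$''. Your route is self-contained and shows exactly where each starting time $t_{m-1}$ is used, but you must redo that bookkeeping yourself. Incidentally, the finite spans of the $\Phi_k^\pm$ are exactly the paper's $F=\bigcup_m F_m$. These vectors are not eigenvectors of $A^*$, but $F$ is dense and $e^{tA^*}$-invariant, hence a core.

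Three places need tightening.

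\textbf{Measurability of $\psi$.} With a countable determining family, what you can prove is that the graph $\{(z,P):P\text{ solves the martingale problem from }(t_{m-1},z)\}$ is Borel. Then $\{\psi>a\}$ is a projection of a Borel set, hence analytic, so $\psi$ is universally measurable on the (analytic) set where solutions exist, not necessarily Borel. That suffices, because Step 2 shows this set carries the common law of $X(t_{m-1})$.

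\textbf{The Girsanov fallback fails here.} $\mathcal VC(t,x)=(0,\Lambda^\beta C(t,x))$ equals $\mathcal Gv$ only for $v=\Lambda^{\beta+\delta}C(t,x)$. When $\beta+\delta>0$ this is in general not in $U$, and existence of solutions is nowhere established in the paper. A cleaner option is the Ethier--Kurtz tilting device. For $F=\prod_{j<m}f_j(X(t_j))\geq 0$, the laws of $X(t_{m-1}+\cdot)$ under $F\,d\mathbb P^i/\mathbb E^i[F]$ solve the shifted martingale problem with the same initial law, by the induction hypothesis. Disintegrating them along $X(t_{m-1})$ gives Dirac-started solutions that depend measurably on $z$ by construction. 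The lemma's hypothesis then closes the induction with no $\psi$ at all.

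\textbf{Test functions.} Take $\phi\in C^2_c$, as the paper does, rather than $C^2_b$. With $C^2_b$ the term $\scal{x}{A^*\nabla\varphi(x)}_H$ is unbounded and $C^2_b(\R^m)$ is not separable. So your ``countable subfamily with rational data'' in Step 2, and the Borel property of the solution graph above, are not immediate.
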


\begin{proof}
For every $k,n\in\N$ and $\mathcal{O}\subseteq\R^n$ we denote by $C^k_c(\mathcal{O})$ the space of $C^k$ functions from $\mathcal{O}$ to $\R$ with compact support. We set $E:=[0,\infty)\times H$.
For every $k\in\N$ we set
\[
g_{2k-1}:=\begin{pmatrix}\widetilde e_k\\ 0\end{pmatrix},\qquad
g_{2k}:=\begin{pmatrix}0\\ \widetilde e_k\end{pmatrix},
\]
where $\{\widetilde e_k\}_{k\in\N}$ is the orthonormal basis of $U$ introduced in 
\eqref{Def:base-ON}, so that $\{ g_j\}_{j\in\N}$ is an orthonormal basis of $H$. We 
introduce the following space of cylindrical functions
\[
\mathcal D:=\Big\{\varphi(y)=f\big(\scal{y}{g_{1}}_H,\ldots,
\scal{y}{g_{2m}}_H\big):\ y\in H, \ m\in\N,\ f\in C^2_c(\R^{2m})\Big\},
\]
and we set
\[
\widetilde{\mathcal D}:={\rm span}\Big\{\psi(t,y)=\eta(t)\varphi(y):\ t\in[0,\infty), \ y\in H, \ \ 
\eta\in C^1_c([0,\infty)),\ \varphi\in\mathcal D\Big\}.
\]
We stress that, for every $m\in\N$, 
$F_m:={\rm span}\{g_1,\ldots,g_{2m}\}$ is contained in $D(A^*)\cap D(\mathcal V^*)$ and is 
invariant for $A^*$. In particular $F:=\bigcup_{m\in\N}F_m$ is dense in $H$ and invariant for $\{e^{tA^*}\}_{t\geq0}$, 
hence it is a core for $A^*$.
Moreover, for every $t \geq 0$ and $\varphi \in \mathcal D$, we consider the following time-dependent second-order differential operator
\[
L_t\varphi(y):=\frac12{\rm Tr}\big[\mathcal G\mathcal G^*D^2\varphi(y)\big]
+\scal{y}{A^*\nabla\varphi(y)}_H+\scal{\mathcal V^*\nabla\varphi(y)}{C(t,y)}_U,
\]
and for every $\psi\in \widetilde{\mathcal D}$, we consider the following time-independent differential operator
\[
\widetilde L\psi:=\frac{\partial\psi}{\partial t}+L_t\big(\psi(t,\cdot)\big).
\]
First of all we study the relation between the solutions to the martingale problem associated to $\widetilde L$ and the weak mild solutions to \eqref{abstract_formulation}. We say that $\{(t,Z(t))\}_{t\geq 0}$ is a solution to the martingale problem for  $(\widetilde L,\delta_{(t_0,x)})$ if, for every $\psi\in \widetilde{\mathcal{D}}$, the process $\{M(t)\}_{t\geq t_0}$ given by 
\[
M(t):=\psi(t,Z(t))-\int_{t_0}^t\widetilde L\psi(s,Z(s))ds
\]
is a martingale with respect to the natural filtration of $\{(t,Z(t))\}_{t\geq 0}$. Let $t_0\geq0$, $x\in H$, and let $\{Z(t)\}_{t\geq t_0}$ be a weak mild solution on $[t_0,\infty)$ with $Z(t_0)=x$. 
For every $\varphi\in\mathcal D$ the It\^o formula gives 
$d\varphi(Z(t))=L_t\varphi(Z(t))dt+\scal{\mathcal G^*\nabla\varphi(Z(t))}{dW(t)}_U$, so that the process $\{M(t)\}_{t\geq t_0}$ defined by
\[
M(t):=\varphi(Z(t))-\int_{t_0}^{t}L_s\varphi(Z(s))ds
\]
is a martingale. For 
$\eta\in C^1_c([0,\infty))$, replacing $\varphi(Z(s))=M_s+\int_{t_0}^{s}L_r\varphi(Z(r))dr$ 
and using the Fubini theorem, we obtain
\[
(\eta\varphi)(t,Z(t))-\int_{t_0}^{t}\widetilde L(\eta\varphi)(s,Z(s))ds
=\eta(t)M(t)-\int_{t_0}^{t}\eta'(s)M_s\,ds, \qquad t\geq 0,
\]
whose right-hand side is a martingale, since $\mathbb E[M_s|\mathcal F_u]=M_u$ for $s\geq u$ 
and $\int_u^t\eta'=\eta(t)-\eta(u)$. Hence, for every initial datum $(t_0,x)$, the process $\{\widetilde Z(t)\}_{t\geq t_0}:=\{(t,Z(t))\}_{t\geq t_0}$ solves the 
martingale problem for $(\widetilde L,\delta_{(t_0,x)})$, namely, for every $\psi\in \widetilde{\mathcal D}$, the process $\{\widetilde M(t)\}_{t\geq t_0}$ defined by
\[
\widetilde M(t)=\psi(\widetilde Z(t))-\int_{t_0}^{t}\widetilde L\psi(\widetilde Z(s))ds
\]
is a martingale. Conversely, in the same way as in \cite[Theorem 3.6]{Kun2013}, up to a localization argument which allows us to replace the test functions of $\widetilde{\mathcal D}$ by the ones used in \cite{Kun2013},  it is possible to prove that every solution to the martingale problem for  $(\widetilde L,\delta_{(t_0,x)})$ is of the type $\{\widetilde Z(t)\}_{t\geq t_0}=\{(t,Z(t))\}_{t\geq t_0}$ where $\{Z(t)\}_{t\geq t_0}$ is a weak mild solution to \eqref{abstract_formulation} on $[t_0,\infty)$ with initial datum $x$.
\\
Due to the previous consideration, to conclude the proof it is sufficient to prove that, for every initial datum $(t_0, x)\in E$,  given any two solutions $\{\widetilde Z^1(t)\}_{t\geq t_0}$ and $\{\widetilde Z^2(t)\}_{t\geq t_0}$ to the martingale problem for $(\widetilde L,\delta_{(t_0,x)})$, if the laws of $\widetilde Z^1(t)$ and $\widetilde Z^2(t)$ coincide, for every $t\geq t_0$ then the processes $\{\widetilde Z^1(t)\}_{t\geq t_0}$ and $\{\widetilde Z^2(t)\}_{t\geq t_0}$ have the same law.\\
The strategy is to apply \cite[Theorem 18]{Pri2015} to the operator $\widetilde L:\widetilde{\mathcal D}\subset C_b(E)\to B_b(E)$. However, to do so, we need to check that $\widetilde L:\widetilde{\mathcal D}\subset C_b(E)\to B_b(E)$ satisfies \cite[Hypothesis 17]{Pri2015}.\\
For every $\varphi\in\mathcal D$ and $y\in H$, the vectors $\nabla\varphi(y)$ and 
$D^2\varphi(y)$ belong to $D(A^*)\cap D(\mathcal{V}^*)$ and they are finite linear combinations of the $g_k$'s and of their tensor 
products. Moreover, also $\scal{y}{A^*\nabla\varphi(y)}_H$ depends on $y$ only through the same finitely many coordinates as $\varphi$ and, by definition of $\mathcal{D}$, it is bounded. Taking into account also condition \ref{H2b}, we obtain $L_t\varphi\in B_b(H)$ for every $t\geq0$, and hence $\widetilde L\psi\in B_b(E)$ for every $\psi\in\widetilde{\mathcal D}$. As for \cite[Hypothesis 17]{Pri2015}, one argues as in \cite[Remark 8]{Pri2015}, choosing countable subsets of $C^1_c([0,\infty))$ and of $C^2_c(\R^{2m})$ which are dense with respect to the $C^1-$ and the $C^2-$ norm, respectively, and taking the union over $m\in\N$.\\
Finally we can apply \cite[Theorem 18]{Pri2015} to $\widetilde L$. Let $(t_0, x)\in E$ and let $\{\widetilde Z^1(t)\}_{t\geq t_0}$ and $\{\widetilde Z^2(t)\}_{t\geq t_0}$ be any two solutions to the martingale problem for $(\widetilde L,\delta_{(t_0,x)})$. By 
the results above, for every $i\in\{1,2\}$,  $\{\widetilde Z^i(t)\}_{t\geq t_0}=\{(t,Z^i(t))\}_{t\geq t_0}$ where $\{Z^i(t)\}_{t\geq t_0}$ is a weak mild solution to \eqref{abstract_formulation} on $[t_0,\infty)$ with initial datum $x$, so that the law of $\widetilde Z^i(t)$ on $E$ is 
$\delta_t\otimes\mu^i_t$, where $\mu^i_t$ denotes the law of $Z^i(t)$ on $H$. By hypothesis $\mu^1_t=\mu^2_t$, for every $t\geq t_0$, and so that the laws of $\widetilde Z^1(t)$ and $\widetilde Z^2(t)$ coincide, for every $t\geq t_0$. Hence by \cite[Theorem 18]{Pri2015}
 the processes $\{\widetilde Z^1(t)\}_{t\geq t_0}$ and $\{\widetilde Z^2(t)\}_{t\geq t_0}$ have the same law, and, by the equivalence proved above, weak uniqueness holds for \eqref{abstract_formulation}.

\end{proof}

\section*{Acknowledgments} 
The authors D. A. and D. A. B. are members of GNAMPA (Gruppo Nazionale per l'Analisi Matematica,
la Probabilit\`a e le loro Applicazioni) of the Italian Istituto Nazionale di Alta Matematica (INdAM).\\
The authors have no relevant financial or non-financial interests to disclose.

\section*{Declarations} 

\noindent\textbf{Declaration of generative AI use.}
The authors used a generative AI assistant to check the English
grammar, to detect misprints, and to verify the consistency of the
notation and the structure of the paper. The authors reviewed and
edited the output and take full responsibility for the content of
the article.

\bibliographystyle{siam}
\bibliography{biblio.bib}

\end{document}